\DeclareOldFontCommand{\bf}{\normalfont\bfseries}{\mathbf}
\newcommand{\introthmname}{}
\newtheorem{introthminn}{\introthmname}
\newenvironment{introthm}[1]
  {\renewcommand{\introthmname}{#1}\begin{introthminn}}
  {\end{introthminn}}
\newtheorem{introthminn1}{\introthmname}
\newenvironment{introthm1}[1]
  {\renewcommand{\introthmname}{#1}\begin{introthminn1}}
  {\end{introthminn1}}
\newtheorem{theorem}{Theorem}[section]
\newtheorem{corollary}[theorem]{Corollary}
\newtheorem{lemma}[theorem]{Lemma}
\newtheorem*{thm*}{Theorem}
\theoremstyle{definition}
\newtheorem{definition}[theorem]{Definition}
\newtheorem{deflem}[theorem]{Definition-Lemma}
\newtheorem{remark}[theorem]{Remark}
\newcommand\restr[2]{{% we make the whole thing an ordinary symbol
\left.\kern-\nulldelimiterspace % automatically resize the bar with \right
#1 % the function
\vphantom{\big|} % pretend it's a little taller at normal size
\right|_{#2} % this is the delimiter
}}
\newcommand{\finsub}[0]{{\subseteq}_{\text{\sf{fin}}}}
\title{\bfseries A note on quadratic cyclotomic extensions}
\author{Sophie Marques and Elizabeth Mrema}
\begin{document}

%==== Front matter ==================================================
   % \frontmatter
  %  \WaterMark{UScrest-WM}
    %\TitlePage

   % \DeclarationDate{October 2020}
    %\DeclarationPage

    %\include{frontmatter/Chap-Front}

%\setcounter{tocdepth}{2}
\setcounter{tocdepth}{3}
\maketitle
\begin{center}
\rm e-mail: smarques@sun.ac.za

\it
Department of Mathematical Sciences, 
 Stellenbosch University, \\
Stellenbosch, 7600, 
South Africa\\
\&
NITheCS (National Institute for Theoretical and Computational Sciences), \\
South Africa \\ \bigskip

\rm e-mail: 25138413@sun.ac.za

\it
Department of Mathematical Sciences, 
 Stellenbosch University, \\
Stellenbosch, 7600, 
South Africa\\
\end{center} 
    \tableofcontents

\begin{abstract}
This paper provides two characterizations of the primitive roots of unity in quadratic cyclotomic extensions over arbitrary fields. Firstly, we introduce a mapping from $\mathbb{N}$ to $\mathbb{N}$ crucial for describing these roots, closely tied to their order over the field. Secondly, for any prime $p$, we determine the maximal natural number $n$ such that $\zeta_{p^n}$ defines a quadratic cyclotomic extension over the field $F$. This characterization is uniform across different fields, regardless of their characteristic, and applies to both odd and even primes. \\

\noindent \textbf{Keywords.} cyclotomic, field extensions, order, quadratic extensions, automorphisms.

\noindent \textbf{2020 Math. Subject Class.} 12F05, 12E05, 12E12, 12E20, 12E10, 12F10, 12F15 
\end{abstract}

\section*{Introduction} 
Over the field $\mathbb{Q}$, quadratic cyclotomic extensions do not attract much interest. Indeed, $[\mathbb{Q}(\zeta_n):\mathbb{Q}]=2$ if and only if $n=3$, $n=4$, or $n=6$. We have $\mathbb{Q}(\zeta_3)=\mathbb{Q}(\zeta_6)$. Moreover, $\mathbb{Q}(\zeta_4)$ is not $\mathbb{Q}$-isomorphic to $\mathbb{Q}(\zeta_3)=\mathbb{Q}(\zeta_6)$. Thus, the set of primitive roots of unity defining quadratic cyclotomic extensions is $(\mu_6\cup \mu_4)\setminus \mu_2$ and the set of quadratic cyclotomic extensions consists of the $2$ distinct quadratic cyclotomic extensions $\mathbb{Q}(\zeta_3)$ and $\mathbb{Q}(\zeta_4)$.

Given a finite field $\mathbb{F}_q$ of order $q$, where $q = \wp^n$, $\wp$ is a prime, and $n \in \mathbb{N}$, $\zeta_n$ generates a quadratic cyclotomic extension if and only if $n | (q^2 -1)$ and $n \nmid (q-1)$.

Examining specific examples over finite fields, different cases arise:

\begin{enumerate}
    \item When $q = 13$, $\mathbb{F}_{13^2} = \mathbb{F}_{13}(\zeta_{2^3})$, $\zeta_{2^2} \in \mathbb{F}_{13}$, and the minimal polynomial of $\zeta_{2^3}$ over $\mathbb{F}_{13}$ is $x^2 - \zeta_{2^2}$.
    
    \item When $q = 23$, $\mathbb{F}_{23^2} = \mathbb{F}_{23}(\zeta_{2^i})$ for all $i \in \{2, \dots, 4\}$. The minimal polynomial of $\zeta_4$ over $\mathbb{F}_{23}$ is the quadratic polynomial $x^2 + 1$. Furthermore, the minimal polynomial of $\zeta_{2^3}$ over $\mathbb{F}_{23}$ is $x^2 - (\zeta_{2^3} + \zeta_{2^3}^{-1}) x + 1$. Finally, the minimal polynomial of $\zeta_{2^4}$ over $\mathbb{F}_{23}$ is $x^2 - (\zeta_{2^4} - \zeta_{2^4}^{-1}) x - 1$.
\end{enumerate} 

More broadly, for example, over the extension of rational numbers $\mathbb{Q} (\zeta_{32}+\zeta_{32}^{-1})/\mathbb{Q}$, we obtain 
$$\mathbb{Q} (\zeta_{32}+\zeta_{32}^{-1})(\zeta_{32})=\mathbb{Q} (\zeta_{32}+\zeta_{32}^{-1})(\zeta_{16})=\mathbb{Q} (\zeta_{32}+\zeta_{32}^{-1})(\zeta_{8})= \mathbb{Q} (\zeta_{32}+\zeta_{32}^{-1})(\zeta_{4})$$ 
as a quadratic cyclotomic extension of degree two over $\mathbb{Q} (\zeta_{32}+\zeta_{32}^{-1})$.

In this article, we propose two descriptions of the set of quadratic primitive roots of unity over an arbitrary field, that is, primitive roots of unity defining a quadratic cyclotomic extension of a given field. Each description sheds light on different aspects of the nature of quadratic primitive roots of unity over a given field. While we begin by examining the properties of individual quadratic cyclotomic extensions, broadening our perspective to consider the complete set of quadratic extensions offers insights into their fundamental characteristics.

To be more precise, our first main result is as follows:
\begin{introthm}{Theorem} \label{in1}
The set $\mathscr{M}_{2, \overline{F}}$ of unity roots in $\overline{F}$ which define quadratic cyclotomic extensions can be expressed as:
\[
\mathscr{M}_{2, \overline{F}} = \operatorname{Eq}( \kappa_{F}, 0_{\mu_\infty}) \setminus \mu_{\infty_F};
\]
where:
\begin{itemize}
\item $\mu_{\infty}$ is the set of root of unity and $\mu_{\infty_F}$ is the set of unity roots in $F$;
\item $0_{\mu_{\infty}}: \mu_{\infty} \rightarrow \frac{F(\mu_{\infty})}{F}$ is the zero map sending a root of unity to the additive coset of 0 in $\frac{F(\mu_{\infty})}{F}$; 
\item $\kappa_F: \mu_\infty  \rightarrow \frac{F(\mu_\infty)}{{}_{F}}$ is the map defined as$$\begin{array}{clll}& \zeta_n & \mapsto & \begin{cases}[\zeta_{\operatorname{t}_F(n)} + \zeta_{\operatorname{t}_F(n)}^{-1}]_F \ \ \text{if}\ \operatorname{o}_F(\zeta_{2^{\text{\textepsilon}_n(2)}})\neq 2 \ \text{ and } \ \text{\textepsilon}_n(2)\neq c_2;\\
			 [\zeta_{\operatorname{t}_F(n)} - \zeta_{\operatorname{t}_F(n)}^{-1}]_F\ \ \text{if}\ \operatorname{o}_F(\zeta_{2^{\text{\textepsilon}_n(2)}})\neq 2 \ \text{ and } \ \text{\textepsilon}_n(2)= c_2;\\
			 [\zeta_{2^{ \text{\textepsilon}_n(2)}}(\zeta_{\operatorname{t}_F(n)} - \zeta_{\operatorname{t}_F(n)}^{-1})]_F \ \ \text{ otherwise; }		 
			  \end{cases}\\
	\end{array} $$
	 with
$$\operatorname{t}_F(p^e) =\left\{ \begin{array}{clll}p^e & \text{when }\  p \ \text{ is odd and } \operatorname{o}_F(\zeta_{p^e})\neq1; \\
2^e & \text{when } \ p=2 \ \text{and } \ \operatorname{o}_F(\zeta_{2^e})>2;\\
2 & \text{when} \ p=2 \text{ and } \ \operatorname{o}_F(\zeta_{2^e})=2;\\
1 & \text{when} \ \operatorname{o}_F(\zeta_{p^e})=1.
\end{array} \right.$$ 
for any $p\in \mathbb{P}$ and $e\in \mathbb{N}$, $\operatorname{t}_F(n)=\prod \limits_{\substack{p\in \mathbb{P}, p|n}} \operatorname{t}_F(p^{\text{\textepsilon}_n(p)})$, and $\text{\textepsilon}_n(p)$ is the maximal power of $p$ dividing $n$, for any $n\in \mathbb{N}$;
	\item $\operatorname{Eq}( \kappa_{F}, 0_{\mu_\infty})= \{ \zeta \in \mu_\infty | \kappa_{F}(\zeta)=[0]_F\}.$
 %Also, $\kappa_F^p$ is defined as $\kappa_F|_{\mu_{p^\infty}}$.
\end{itemize}
with $c_2$ the unique integer $e\in \mathbb{N}$ such that $\zeta_{2^e}\notin F$, $\operatorname{t}_F(2^e)\neq 2$ and $\zeta_{2^e}-\zeta_{2^e}^{-1}\in F$ and $\frac{F(\mu_\infty)}{F}$ being the quotient set when both $F(\mu_\infty)$ and $F$ are viewed as additive groups, and $[a]_F$ is the coset of $a$ in this quotient.
\end{introthm}
This initial characterization underscores the importance of the order of a primitive root of unity and the natural number $c_2$, as well as the map $\kappa_F$ via the map $\operatorname{t}_F$, to grasp the structure of quadratic cyclotomic extensions. The map $\operatorname{t}_F$ covers all possible cases in a single function, a definition made feasible once Theorem \ref{theorem-valueofk} is established. A key result in establishing this theorem was to show that $2$ power primitive roots of unity defining quadratic cyclotomic extensions have an order of either $2$ or $2^{e-1}$ over $F$ (see Theorem \ref{order-zeta2e}). The second main result is as follows:
\begin{introthm}{Theorem}  
\label{in2}
	\begin{enumerate}
		\item The set $\mathscr{M}_{2, \overline{F}}$ of primitive roots of unity in $\mu_{\infty}$ defining quadratic extensions can be described as a disjoint union of group differences: $$\mathscr{M}_{2, \overline{F}}=\coprod_{M\in \mathcal{S}_{F,\operatorname{max}}}(\bigvee_{p\in M}\mu_{p^{\nu_{p^\infty_F}}}\bigvee_{p \in \mathbb{P}\setminus M}\mu_{p^{\ell_{p_F^\infty}}}\setminus \bigvee_{p \in \mathbb{P}}\mu_{p^{\ell_{p_F^\infty}}});$$
		\item The set $\mathscr{C}_{2,\overline{F}}$ of quadratic cyclotomic extensions is in bijection with $\mathcal{S}_{F,\operatorname{max}};$	
		\end{enumerate}
%In particular, the set $\mathscr{M}_{2, \overline{F}}^p$ of unity roots in $\mu_{p^{\infty}}$ defining quadratic extensions can be described as a group difference:
%	\[
	%\mathscr{M}_{2, \overline{F}}^p=\mu_{p^{\nu_{p^\infty_F}}}-\mu_{p^{\ell_{p^\infty_F}}}.
%	\]
	where 
\begin{itemize}
			\item $\mathbb{P}$ is the set of prime numbers;
\item $\mu_n$ is the set of $n^{th}$ roots of unity, where $n \in \mathbb{N}$;
    \item $\mathcal{S}_{F,\operatorname{max}}$ is the set of upper bounds of maximal chains in $\mathcal{S}_F$, \\
    with $\mathcal{S}_F:=\{S\in P(\mathbb{P})| 
\forall \ p\in S, \exists e_p \in \mathbb{N}:[\zeta_{p^{e_p}}\in \mathscr{M}_{2, \overline{F}}\ \wedge \forall \ B\finsub S, \zeta_{\prod\limits_{p\in B}p^{e_p}} \in \mathscr{M}_{2, \overline{F}}]\}.$
\item $\nu_{p^\infty_F}= \begin{cases} \nu_{p^\infty_F}^++1 & \text{when} \  p=2 \ \text{and} \ F \  \text{has property} \ \mathcal{C}_2 ;\\  \nu_{p^\infty_F}^+ & \text{otherwise},\end{cases} $ \\ 
with $\nu_{p^\infty_F}^+=\begin{cases}
			\operatorname{max}\left\{ k \in \mathbb{N} \cup \{ 0 \} |\zeta_{\operatorname{t}_F(p^k)}+\zeta_{\operatorname{t}_F(p^k)}^{-1}\in F \right\} & \text{if it exists}; \\
			\infty & \text{otherwise};
			\end{cases}
			$\\
and $F$ has property $\mathcal{C}_2$ if there exists $e\in \mathbb{N}$ $\zeta_{2^e}\notin F$, $\operatorname{t}_F(2^e)\neq 2$ and $\zeta_{2^e}-\zeta_{2^e}^{-1}\in F$;    
\item $\ell_{p^\infty_F}=\begin{cases}
				\operatorname{max}\{k \in \mathbb{N} \cup \{ 0 \} |\zeta_{p^k}\in F\} & \text{if it exists}; \\
				\infty & \text{otherwise};
			\end{cases}
			$ 
\item $A \coprod B$ denotes the coproduct of some sets $A$ and $B$ and $A \bigvee B$ denote the join of two subgroups $A$ and $B$ of a group $C$; above the infinite join is simply the set of finite products of elements taken in the family of groups considered.
   % In other words, $\mu_{F,M}$ is a finite product of $p$ power unity roots, where the power is less than or equal to $\nu_{p,F}$ if $p \in M$, or less than or equal to $\ell_{p_F^\infty}$ if $p \in \mathbb{P}\backslash M$. We note this set is fully determined by the $\nu_{p,F}$ and $\ell_{p_F^\infty}$.
\end{itemize}
\end{introthm}
This theorem reveals two natural numbers $\ell_{p^\infty_F}$ and $\nu_{p^\infty_F}$ depending only on the base field $F$ and a fixed prime $p$. It is important to note that the latter natural number depends on the property $\mathcal{C}_2$, which we demonstrate to be a significant property of the base field when studying quadratic cyclotomic extensions. We fix a prime number $p$. The natural number $\ell_{p^\infty_F}$ is very natural to define as it represents the maximal natural number $n$ such that $\zeta_{p^n}\in F$. As for $\nu_{p^\infty_F}$, it represents, when there exists a quadratic cyclotomic $p$ power primitive root of unity, the maximal natural number $n$ such that $\zeta_{p^n}$ is quadratic. This integer is uniformly defined regardless of whether $p$ is odd or even and over any arbitrary field, regardless of its characteristic. Finally, $\mathcal{S}_{F,\operatorname{max}}$ permits to bring together the product of those quadratic cyclotomic power roots of unity that remain quadratic.
We also note that Theorem \ref{nu2f} establishes an interesting connection between the integers $\ell_{p^\infty_F}$ and $\nu_{p^\infty_F}$. 

In \cite[Theorem 3.2.16]{thesis} and \cite[Theorem 3.9]{2-power}, we successfully characterize the cyclicity of $2$ power cyclotomic extensions using the results of this article with a property defined only on the base field involving the natural numbers $\nu_{p^\infty_F}$.
The construction of $\nu_{p^\infty_F}$ build upon the integers $\nu_{p^\infty_F}^+$ that appears in many works on cyclotomic extensions without characterization. For instance, in \cite{subfieldsradicalextension}, the authors use the natural numbers $\nu_{p^\infty_F}^+$ to describe the subfields of radical extensions and to determine normal radical extensions and their Galois groups. In \cite{several}, the natural numbers $\nu_{p^\infty_F}^+$ were used in proving Schinzel's theorem. The work of Schinzel, in \cite[Theorem 3]{schinzel} and \cite[Theorem 2.1]{several}, in particular, demonstrated that in the non-Kummer case, a new type of isomorphisms between two simple radical extensions emerges. Although the precise conditions for the emergence of this new type of isomorphisms was not shown in his theorem, the conditions described in \cite[Theorem 2.1]{several} involve $\ell_{2^\infty_F}$, $\nu_{2^\infty_F}$, and the property $\mathcal{C}_2$. In \cite[Theorem 5.2.6]{thesis}, we managed to characterize, based on a property of the base field involving the integers $\nu_{p^\infty_F}$, when the new type of isomorphisms emerging in Schinzel's work appears between simple radical extensions. This result connects the properties of quadratic primitive roots of unity to broader questions about general cyclotomic extensions and simple radical extensions.

In the first section, we introduce two results on general cyclotomic extensions that we will use throughout the paper. The first result, Lemma \ref{equality}, characterizes when two cyclotomic extensions are equal if they have equal degrees. The second result, Theorem \ref{ordergaloiscyclotomic}, provides some constraints on the cardinality of the automorphism group of a general cyclotomic field. We end the section with a corollary of the Waring formula (see Lemma \ref{waring}).

In the second section, after identifying the general form for the minimal polynomial of a primitive root of unity with quadratic minimal polynomial, we identify the possible orders for a primitive root of unity defining a quadratic cyclotomic extension (see Lemma \ref{remark1} and Theorem \ref{order-zeta2e}). This culminates in Theorem \ref{theorem-valueofk}, which consolidates most of our results from this section in one place and expresses the minimal polynomial of a primitive $n^{th}$ root of unity with quadratic minimal polynomial in terms of $\operatorname{o}_F(\zeta_n)$ and $\operatorname{d}_F(n)$.

In the third section, we prove the two main theorems stated above.

We conclude the paper by characterizing the sets of general quadratic extensions as quotients by a group action, thereby revealing the essence of quadratic extension. Furthermore, we introduce embeddings of sets of quadratic cyclotomic extensions into the set of general quadratic extensions (see Definitions \ref{chi-rad} and \ref{chi-rad2}), illustrating how the sets of quadratic cyclotomic extensions are integrated within the larger framework of all quadratic extensions. We include the study of the inseparable case for completeness.
\newpage
\section*{Notation and symbols}
In this paper, 
 \begin{itemize} 
 %\item $\mathbb{N}$ is the set of natural numbers starting at $1$,
 \item $F$ is a field with characteristic $\wp$. 
\item $\overline{F}$ is one fixed algebraic closure of the field $F$.
% \item $n,e$ are integers greater or equal to $1$ unless specified otherwise\eliza{I defined these elements everywhere used we may remove them here?} \sophie{ok}, and 
 \item $p$ is a prime number unless explicitly stated otherwise.
 \item $n$ is a natural number unless explicitly stated otherwise.
 \end{itemize}
For the sake of simplicity throughout this paper, we will assume that all the field extensions discussed are subfields of the above chosen algebraic closure $\overline{F}$.
In the next table, we introduce some notation/symbols that will be used in this paper. Some of these symbols are classical and widely known, but we introduce them here to avoid any possible alternative meaning. In this paper, we will also often introduce symbols using definitions.
\begin{table}[ht]
	\raggedright % Left-align the table
	\renewcommand{\arraystretch}{0.9}
	%	\caption{Table of Notations}
	\textbf{Set theory}
	\begin{tabular}{cp{14cm}c}
		%\hhline{==|}
		\multicolumn{1}{c}{} & \multicolumn{1}{c}{} & \multicolumn{1}{c}{} \\
		%\hhline{=|=|=|}
		$\mathbb{N}$ & The set of natural numbers (excluding $0$). & \\
		%$\mathbb{Q}$ &  Field of rational numbers &\\
		$\mathbb{P}$ & The set of prime numbers.  & \\
		$P(\mathbb{P})$ & The power set of a set $\mathbb{P}$. & \\
		$A \finsub B$ &  $A$ is a finite subset of a set $B$. & \\
			$[j]_{n}$ & The equivalence class of $j\in \mathbb{Z}$ modulo $n$. We shall simply use the notation $[j]$ when $n$ is clear from the context. &  \\
 $\text{\textepsilon}_n(p)$  & The maximal power of a prime number $p$ dividing $n$. In particular, when $p\nmid n$, $\text{\textepsilon}_n(p)=0$. We also write $p^{\text{\textepsilon}_n(p)} || n$. & \\
$\operatorname{q}_n(p)$ & The quotient of the Euclidean division of $n$ by $p^{\text{\textepsilon}_n (p)}$. &  \\
$A^*$ & $A\setminus\{0\}$, for all $A \subseteq F$ containing $0$.&\\
$A \coprod B$ & The coproduct of some sets $A$ and $B$.&\\
 $A \bigvee B$ & The join of two subgroups $A$ and $B$ of a group $C$.&

		\end{tabular}
	\end{table}
			\begin{table}[ht]
			\raggedright % Left-align the table
			\renewcommand{\arraystretch}{0.9}
		\textbf{Field extensions}
			
			\begin{tabular}{cp{14cm}c}
				%\hhline{==|}
				\multicolumn{1}{c}{} & \multicolumn{1}{c}{} & \multicolumn{1}{c}{} \\
				%\hhline{=|=|=|}
	%	$\wp_F$ & The characteristic of a field $F$ (we denote $\wp$ when $F$ is clear from the context)&  \\
			$ \cong_F$ & An $F$-isomorpism between field extensions of $F$.& \\
		$\operatorname{min}(\alpha, F)$ & The minimal polynomial of an element $\alpha \in \overline{F}$ over $F$. &\\
		$\operatorname{o}_F(\alpha)$ & The order of an element $\alpha\in \overline{F}$ over $F$.& \\
		
			\end{tabular}
	\end{table}

\begin{table}[ht]
\raggedright % Left-align the table
\renewcommand{\arraystretch}{0.9}
%	\caption{Table of Notations}
\textbf{Roots of unity and cyclotomic extension}
\begin{tabular}{cp{14cm}c}
%\hhline{==|}
\multicolumn{1}{c}{} & \multicolumn{1}{c}{} & \multicolumn{1}{c}{} \\
%\hhline{=|=|=|}		
$\text{\textctc}_{F}(n)$  &  The maximum power of $\wp$ dividing $n$ when $\wp>0$ or $0$ otherwise. & \\
		$\zeta_n$ & A primitive $n^{th}$ root of unity. Everytime the notation $\zeta_n$ is used, we implicitly assume that $\text{\textctc}_{F}(n)=0$. When $n$ is a power of a prime number $p$, we say that $\zeta_n$ is a $p$ power primitive root of unity. When $[F(\zeta_n)/F]=2$, we say that $\zeta_n$ is a quadratic primitive root of unity. \\
		$\mathcal{P}_n$ &  The set of the primitive $n^{th}$ roots of unity. We note that all elements in $\mathcal{P}_n$ have the same order over $F$. &\\
		$\mu_n$ &  The set of the $n^{th}$ roots of unity .& \\
			${j}_{\sigma,n}$ & The integer such that $\sigma(\zeta_n) = \zeta_n^{{j}_{\sigma,n}}$ where $\sigma$ is an automorphism in the Galois group of the extension $F(\zeta_n)/F$. We may write ${j}_{\sigma}$, when $n$ is clear from context.&  \\
				$\sigma_k$ &  The $F$-automorphism from $F(\zeta_n)$ to $F(\zeta_n)$ such that $\sigma_k(\zeta_n)=\zeta_n^k$ where $k\in \mathbb{N}$. & \\
		$\operatorname{d}_F(n)$ &  The maximum number dividing $n$ such that $\zeta_{\operatorname{d}_F(n)}\in F$. We note that $\operatorname{d}_F(n) = \frac{n}{\operatorname{o}_F(\zeta_n)}$ and  $ \mu_{\operatorname{d}_F(n)}= \mu_n \cap F$. &
		\end{tabular}
	\end{table}

\normalsize 

\newpage 
\section{Preliminary material and notation}
The following criterion gives a characterization for two cyclotomic extensions to be equal when they have equal degrees, useful in proving Lemma \ref{powerset-cyclotomic}.
\begin{lemma}\label{equality}
Let $n, m, l \in \mathbb{N}$ and $[F(\zeta_n):F]=[F(\zeta_m):F]=l$. Then the following assertions are equivalent:
\begin{enumerate}
    \item $F(\zeta_n)=F(\zeta_m)$.
    \item $[F(\zeta_{\operatorname{lcm}(n,m)}):F]=l$.
\end{enumerate}
\end{lemma}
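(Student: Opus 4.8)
The plan is to reduce everything to Lemma~\ref{zetanzetam-lcm}, which identifies the compositum $F(\zeta_n,\zeta_m)$ with the single cyclotomic field $F(\zeta_{\operatorname{lcm}(n,m)})$, together with the elementary principle that a containment $K\subseteq L$ of finite extensions of $F$ with $[K:F]=[L:F]$ forces $K=L$. First I would record the two containments $F(\zeta_n)\subseteq F(\zeta_{\operatorname{lcm}(n,m)})$ and $F(\zeta_m)\subseteq F(\zeta_{\operatorname{lcm}(n,m)})$; these hold because $n\mid \operatorname{lcm}(n,m)$ and $m\mid \operatorname{lcm}(n,m)$, so that $\mu_n,\mu_m\subseteq \mu_{\operatorname{lcm}(n,m)}$ and hence the generating roots of unity already lie in $F(\zeta_{\operatorname{lcm}(n,m)})=F(\mu_{\operatorname{lcm}(n,m)})$.

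For the implication $(1)\Rightarrow(2)$, I would assume $F(\zeta_n)=F(\zeta_m)$. Then the compositum satisfies $F(\zeta_n,\zeta_m)=F(\zeta_n)$, so $[F(\zeta_n,\zeta_m):F]=d$. Rewriting the left-hand side via Lemma~\ref{zetanzetam-lcm} as $F(\zeta_{\operatorname{lcm}(n,m)})$ gives $[F(\zeta_{\operatorname{lcm}(n,m)}):F]=d$, which is exactly assertion $(2)$.

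For the converse $(2)\Rightarrow(1)$, I would assume $[F(\zeta_{\operatorname{lcm}(n,m)}):F]=d$. Combining the containment $F(\zeta_n)\subseteq F(\zeta_{\operatorname{lcm}(n,m)})$ with the hypotheses $[F(\zeta_n):F]=d=[F(\zeta_{\operatorname{lcm}(n,m)}):F]$, the equal-degree principle yields $F(\zeta_n)=F(\zeta_{\operatorname{lcm}(n,m)})$. The same argument applied to $m$ gives $F(\zeta_m)=F(\zeta_{\operatorname{lcm}(n,m)})$, and therefore $F(\zeta_n)=F(\zeta_m)$, establishing $(1)$.

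I do not expect a genuine obstacle here: the content is entirely carried by Lemma~\ref{zetanzetam-lcm} and the degree bookkeeping. The only point requiring mild care is justifying the containments $F(\zeta_n),F(\zeta_m)\subseteq F(\zeta_{\operatorname{lcm}(n,m)})$ cleanly despite $\zeta_n$ not being uniquely defined; this is handled by passing to $F(\mu_n)=F(\zeta_n)$ (Remark~\ref{rootsunity-divisor}(2)) and using $\mu_n\subseteq\mu_{\operatorname{lcm}(n,m)}$, so that the argument is insensitive to the choice of primitive root.
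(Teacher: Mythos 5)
Your proof is correct and follows essentially the same route as the paper: the forward implication reduces the compositum $F(\zeta_n,\zeta_m)$ to $F(\zeta_{\operatorname{lcm}(n,m)})$ via Lemma \ref{zetanzetam-lcm}, and the converse uses the containments from $n,m \mid \operatorname{lcm}(n,m)$ together with the equal-degree principle. The extra care you take about the non-uniqueness of $\zeta_n$ is a harmless refinement, not a different argument.
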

\begin{proof}
The equivalence of this lemma can be deduced after establishing the following results: 
\begin{enumerate} 
    \item For any $n , m \in \mathbb{N}$,
    \[\zeta_n \zeta_m = \prod_{\substack{ p| nm \\ p \text{ prime }}} \zeta_{p^{\max (\text{\textepsilon}_n(p), \text{\textepsilon}_m(p))}}^{p^{|\text{\textepsilon}_n(p)-\text{\textepsilon}_m(p)|}+1}.\] 
    Moreover, 
    \begin{itemize} 
        \item when $\text{\textepsilon}_n(2)\neq \text{\textepsilon}_m(2)$ or $\text{\textepsilon}_n(2)= \text{\textepsilon}_m(2)=0$, $\zeta_n \zeta_m$ is a primitive $\operatorname{lcm}(n,m)^{th}$ root of unity, and 
        \item when $\text{\textepsilon}_n(2)= \text{\textepsilon}_m(2)$ non-zero, $\zeta_n \zeta_m$ is a primitive $\frac{\operatorname{lcm}(n,m)}{2}^{th}$ root of unity.
    \end{itemize}
    \item Given $n, m \in \mathbb{N}$, we have 
    \begin{enumerate} 
        \item $F(\zeta_n\zeta_m)=F(\zeta_{\operatorname{lcm}(n,m)})$ when $\text{\textepsilon}_n(2)\neq \text{\textepsilon}_m(2)$ or $\text{\textepsilon}_n(2)=\text{\textepsilon}_m(2)=0$. 
        \item $F(\zeta_n \zeta_m)=F(\zeta_{\frac{\operatorname{lcm}(n,m)}{2} })$ when $\text{\textepsilon}_n(2)= \text{\textepsilon}_m(2)\neq0$. 
    \end{enumerate} 
    For more details, one can refer to \cite[Lemma 1.2.38]{thesis} and \cite[Corollary 1.2.39]{thesis}.
\end{enumerate} 
\end{proof}

The second result of this section gives a more precise constraint than \cite[Theorem 2.3]{conradcyclotomic} on the order of the automorphism of a cyclotomic extension and will permit us to determine the order of  $2$ power quadratic primitive roots of unity in Lemma \ref{order-zeta2e}.
\begin{theorem}\label{ordergaloiscyclotomic}
	Let $n\in \mathbb{N}$.
We have $\operatorname{Gal} ( F(\zeta_n)/ F)$ is isomorphic to a subgroup of $\operatorname{Aut}_{\mu_{\operatorname{d}_F(n)}} (\mu_n) $.
%where $Aut_{\mu_{\operatorname{d}_F(n)}} (\mu_n)$ is the set of automorphisms of $\mu_n$ that fixes $\mu_{\operatorname{d}_F(n)}$. 
In particular, $\operatorname{o}(\operatorname{Gal}(F(\zeta_n)/F))|\frac{\phi(n)}{\phi(\operatorname{d}_F(n))}$ where $\phi$ is Euler's function.
\end{theorem}

\begin{proof}		
%We prove that $\operatorname{Gal}(F(\zeta_n)/F) \subseteq Aut_{\mu_{\operatorname{d}_F(n)}}(\mu_n)$. 
%We prove that $\operatorname{Gal} ( F(\zeta_n)/ F)$ is isomorphic to a subgroup of $Aut_{\mu_{\operatorname{d}_F(n)}} (\mu_n) $.
%Let  $\sigma \in \operatorname{Gal}(F(\zeta_n)/F)$. %By \cite[Lemma 2.1]{conradcyclotomic}, %we know there is a unique $j_{\sigma} \in \mathbb{Z}$ with $(j_{\sigma}, n) = 1$ such that $\sigma(\zeta_n) = \zeta_n^{j_{\sigma}}$. Therefore, 
% we can deduce that $\sigma$ induces an automorphism of $\mu_n$ that we denote $\sigma|_{\mu_n}$ by restricting and corestricting $\sigma$ to $\mu_n$. 
%By \cite[Lemma 2.1], 
One can define a map $\Psi: \operatorname{Gal}(F(\zeta_n)/F) \rightarrow \operatorname{Aut}_{\mu_{\operatorname{d}_F(n)}}(\mu_n)$ sending $\sigma$ to $\sigma|_{\mu_n}$, obtained by corestricting and restricting $\sigma$ to $\mu_n$, since $\zeta_{\operatorname{d}_F(n)} \in F$. % Indeed, since $\sigma$ fixes $F$ and $\zeta_{\operatorname{d}_F(n)} \in F$, we have  $\sigma|_{\mu_n} \in \operatorname{Aut}_{\mu_{\operatorname{d}_F(n)}}(\mu_n)$. 
Moreover, one can prove that $\Psi$ is a group isomomorphism. By \cite[Lemma 1.2.42]{thesis}, we have $\operatorname{o}\left(\operatorname{Aut}_{\mu_{\operatorname{d}_F(n)}}(\mu_n)\right) = \frac{\operatorname{o}(U_n)}{\operatorname{o}(U_{\operatorname{d}_F(n)})}.$ Hence $\operatorname{o}(\operatorname{Gal}(F(\zeta_n)/F))|\frac{\phi(n)}{\phi(\operatorname{d}_F(n))}.$
\end{proof}
From the previous theorem we deduce the following condition about the degree of cyclotomic extension $F(\zeta_{p^e})/F$ when $p$ is an odd prime number. 
\begin{corollary}\label{degree-odd}
	Let $p$ odd, $\zeta_{p^e} \notin F$ and $\zeta_p \in F$, then $[F(\zeta_{p^e}):F]$ is a $p$ power.
	%Moreover, we have $F(\zeta_{p^e})=F(\zeta_{p^e}+\zeta_{p^e}^{-1})$ and $h_{p^\infty_F}=\ell_{p^\infty_F}$.
\end{corollary}
\begin{proof}
From the assumption, $\operatorname{d}_F(p^e)= p^f$ where $f\geq1$. 
	The result follows immediately  by Theorem \ref{ordergaloiscyclotomic} above because $\operatorname{o}(\operatorname{Gal}(F(\zeta_{p^e})/F))|\frac{\phi(p^e)}{\phi(\operatorname{d}_F(p^e))}=p^{e-f}$. 
\end{proof}

We end the section with a direct consequence of the Waring formula, yet very useful in this paper.
\begin{lemma}\label{waring}
Given $x, y \in \overline{F}$ and $ t\in \mathbb{N}$ such that $x +y \in F$ and $xy\in F$. Then, $x^{t}+y^{t}\in F$ for all $t\in \mathbb{N}$.  
\end{lemma}
\begin{proof} By Waring formula (see \cite[\S 4.9]{comtet}) we have $$x^{t}+y^{t}=\sum \limits^{ \lfloor \frac{t}{2} \rfloor }_{i=0}(-1)^i\frac{t}{t-i}\binom{t-i}{i}\left(x+y\right)^{t-2i}(xy)^i $$ 
	$ \text{where} \ \binom{t-i}{i}\ \text{is a binomial coefficient}$ and $ \lfloor \frac{t}{2} \rfloor$ is the floor function at $  \frac{t}{2}$.
	Since $x+y\in F$, $xy\in F$ and   $(-1)^i\frac{t}{t-i}\binom{t-i}{i}\in \mathbb{Z}$ then  $x^t+y^t\in F$. 
\end{proof}

\section{Properties of quadratic cyclotomic extensions}
Our objective is to examine the properties of quadratic cyclotomic extensions. We begin by noting that quadratic cyclotomic extensions are always separable. Indeed, when $\wp=2$, $n$ cannot be even because there are no even primitive roots of unity in characteristic $2$.
\subsection{General form of the minimal polynomial of quadratic roots of unity}
 We will now establish the general form of the minimal polynomial of a primitive root of unity defining a cyclotomic extension of degree $2$.
\begin{lemma}\label{minimalpolynomialzetan}
 Let $n \in \mathbb{N}$ and suppose that $[F(\zeta_n):F]=2$. The minimal polynomial of $\zeta_n$ is of the form $x^2-(\zeta_n+\zeta_n^k)x+\zeta_n^{k+1}$ for some unique $k\in \{1, \cdots, n-1\}$ and $(k,n)=1$. Moreover,  $\operatorname{o}_F(\zeta_n)|k^2-1$.
\end{lemma}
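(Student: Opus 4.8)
The plan is to exploit that a separable quadratic extension is automatically Galois, and then to read off the second root of the minimal polynomial from the structural results of Lemma \ref{autfix} and Lemma \ref{ordergaloiscyclotomic}.

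First I would set up the Galois picture. Any degree-two extension is normal (the minimal polynomial of $\zeta_n$ has degree $2$, and once one root lies in $F(\zeta_n)$ the other does too, since the sum of the roots lies in $F$). Combined with the separability hypothesis, this makes $F(\zeta_n)/F$ Galois with $Gal(F(\zeta_n)/F)=\{id,\sigma\}$ a group of order $2$. Consequently the minimal polynomial of $\zeta_n$ splits over $F(\zeta_n)$, and its two roots are precisely the Galois orbit $\zeta_n$ and $\sigma(\zeta_n)$.

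Next I would identify $\sigma(\zeta_n)$. By Lemma \ref{ordergaloiscyclotomic} we have $\sigma\in Gal(F(\zeta_n)/F)\subseteq Aut_{\mu_{n_F}}(\mu_n)\subseteq Aut(\mu_n)$, so Lemma \ref{autfix} applies and furnishes a unique $k:=j_{\sigma,n}\in\{1,\dots,n-1\}$ with $(k,n)=1$ and $\sigma(\zeta_n)=\zeta_n^{k}$. The minimal polynomial is then
$$(x-\zeta_n)(x-\zeta_n^{k})=x^2-(\zeta_n+\zeta_n^{k})x+\zeta_n^{k+1},$$
which is exactly the asserted form. Uniqueness of $k$ is inherited directly from the uniqueness of the nontrivial automorphism $\sigma$ and of its representative $j_{\sigma,n}$ guaranteed by Lemma \ref{autfix}.

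For the final divisibility I would simply read off the constant term. Since the minimal polynomial has coefficients in $F$, we have $\zeta_n^{k+1}\in F^{\times}$. By Definition \ref{order-definition} the set of exponents $d$ with $\zeta_n^{d}\in F^{\times}$ is the subgroup $o_F(\zeta_n)\mathbb{Z}$ of $\mathbb{Z}$, so $o_F(\zeta_n)\mid k+1$ and hence $o_F(\zeta_n)\mid(k+1)(k-1)=k^2-1$. The only point requiring care is ensuring that $\sigma$ genuinely restricts to an automorphism of the group $\mu_n$ so that Lemma \ref{autfix} may be invoked; this is precisely what Lemma \ref{ordergaloiscyclotomic} supplies, so no real obstacle remains. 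As an alternative route to the divisibility, applying $\sigma$ twice gives $\zeta_n^{k^2}=\sigma^2(\zeta_n)=\zeta_n$, whence $n\mid k^2-1$; since $o_F(\zeta_n)\mid n$ this yields the sharper statement and the claim at once.
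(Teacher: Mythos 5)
Your proof is correct, and its skeleton is the same as the paper's: identify the second root of the minimal polynomial as $\sigma(\zeta_n)=\zeta_n^{k}$ for the nontrivial $\sigma\in Gal(F(\zeta_n)/F)$, and expand $(x-\zeta_n)(x-\zeta_n^{k})$. The sub-arguments, however, differ in three places. (i) You obtain $\sigma(\zeta_n)=\zeta_n^{k}$ with $(k,n)=1$ by passing through Lemma \ref{ordergaloiscyclotomic} and Lemma \ref{autfix}; the paper argues directly that $\sigma$ maps primitive $n^{th}$ roots of unity to primitive $n^{th}$ roots of unity. Both are valid; yours simply reuses the automorphism-group machinery of Section 2. (ii) For uniqueness, the paper gives a self-contained computation: if $k'\not\equiv k \bmod n$ also had $\zeta_n+\zeta_n^{k'}\in F$ and $\zeta_n^{k'+1}\in F$, then $\zeta_n(\zeta_n^{k}-\zeta_n^{k'})=\zeta_n^{k+1}-\zeta_n^{k'+1}$ would force $\zeta_n\in F$, a contradiction. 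Your Galois-theoretic uniqueness (the second root must be $\sigma(\zeta_n)$ for the unique nontrivial $\sigma$, and the exponent class modulo $n$ is unique) is equivalent, but note that it implicitly uses that any monic quadratic over $F$ with root $\zeta_n$ equals the minimal polynomial, and that separability is needed to exclude the double-root case $\zeta_n^{k'}=\zeta_n$; both are one-line details, so this is not a gap. (iii) The most genuine divergence is the divisibility: you read $o_F(\zeta_n)\mid k+1$ off the constant term and multiply by $k-1$, whereas the paper applies $\sigma$ twice to get $\zeta_n^{k^2}=\zeta_n$, hence $n\mid k^2-1$. Each route refines the stated claim in a different direction, and neither refinement implies the other; your intermediate fact $o_F(\zeta_n)\mid k+1$ is precisely the form the paper invokes later, e.g.\ in the proof of Lemma \ref{allcyclotomicofdegree2}, so it is arguably the more useful one to isolate. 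Your closing alternative via $\sigma^2$ coincides with the paper's own argument.
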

\begin{proof}
Let  $\sigma \in \operatorname{Gal}(F(\zeta_n)/F)$ with $\sigma\neq \operatorname{Id}$. The description of the minimal polynomial follows directly from the fact that $\sigma(\zeta_n)= \zeta_n^k$ where $k\in \{1, \cdots, n-1\}$ with $(k, n)=1$. To prove the uniqueness of $k$, we may use contradiction. We suppose that there is $k'\in \{1, \cdots, n-1\}$ such that $k\not \equiv k' \mod n$, $\zeta_n+\zeta_n^{k'}\in F$ and $\zeta_n^{k'+1}\in F$. So that $\zeta_n(\zeta_n^k-\zeta_n^{k'})=\zeta_n^{k+1}-\zeta_n^{k'+1}$ which implies that $\zeta_n \in F$ since $\zeta_n^k-\zeta_n^{k'}\in F$ and $\zeta_n^{k+1}-\zeta_n^{k'+1}\in F$. This is a contradiction with the initial assumption. Hence $k$ is unique. Further, $\sigma^2(\zeta_n)=\zeta_n$. Hence $\zeta_n^{k^2-1}=1$. Hence $\operatorname{o}_F(\zeta_n)|(k^2-1)$.
\end{proof}
The uniqueness of $k$ permits us to establish the next definition. 
\begin{definition} 
 Let $n\in \mathbb{N}$ such that $[F(\zeta_n):F]=2$. We denote $\text{ \textlyoghlig}_n$ as the only integer in $\{1, \cdots, n-1\}$ coprime with $n$ such that $\zeta_n + \zeta_n^{\text{ \textlyoghlig}_n} \in F$ and $\zeta_n^{\text{ \textlyoghlig}_n+1}\in F $. 
\end{definition}
When $\wp \neq 2$, we know that any quadratic extension is a simple radical extension, meaning it admits a radical generator whose minimal polynomial is of the form $x^2 - a$, where $a$ is in the base field. When $\wp= 2$, we know that any separable quadratic extension is an Artin-Schreier extension, meaning it admits an Artin-Schreier generator whose minimal polynomial is of the form $x^2 - x - a$, where $a$ is in the base field. We explicitly provide a radical generator in the case $\wp \neq 2$ and an Artin-Schreier generator in the case $\wp= 2$ for quadratic cyclotomic extensions.
\begin{corollary} \label{generator} 
Let $n \in \mathbb{N}$ and suppose that $[F(\zeta_n):F]=2$.
Then 
\begin{enumerate} 
\item $\zeta_n-\zeta_n^{\text{ \textlyoghlig}_n}$ is a radical generator for $F(\zeta_n)$ over $F$, when $\wp \neq 2$;
\item $\frac{\zeta_n}{\zeta_n+\zeta_n^{\text{ \textlyoghlig}_n}}$ is an Artin-Scheier generator for $F(\zeta_n)$ over $F$, when $\wp= 2$. 
\end{enumerate} 
\end{corollary}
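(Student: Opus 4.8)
The plan is to exploit the Galois structure of the extension. As recalled at the opening of this section, a quadratic cyclotomic extension is always separable, and it is normal because the two roots $\zeta_n$ and $\zeta_n^{\text{\textlyoghlig}_n}$ of the minimal polynomial from Lemma \ref{minimalpolynomialzetan} both lie in $F(\zeta_n)$. Hence $F(\zeta_n)/F$ is Galois with group $\{Id,\sigma\}$, where $\sigma(\zeta_n)=\zeta_n^{\text{\textlyoghlig}_n}$, and its fixed field is $F$; so to place an element in $F$ it suffices to show $\sigma$ fixes it. The one preliminary identity I would record is $\sigma(\zeta_n^{\text{\textlyoghlig}_n})=\zeta_n$, which is immediate from $\sigma^2=Id$.

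For part (1), in characteristic $\neq 2$, I would set $\beta=\zeta_n-\zeta_n^{\text{\textlyoghlig}_n}$ and compute $\sigma(\beta)=\zeta_n^{\text{\textlyoghlig}_n}-\zeta_n=-\beta$. Consequently $\sigma(\beta^2)=\beta^2$, so $\beta^2=:a\in F$ and $\beta$ is a root of the radical polynomial $x^2-a$. To conclude that $\beta$ is a radical generator I must verify $[F(\beta):F]=2$, equivalently $\beta\notin F$: since $\beta\neq 0$ (otherwise $\sigma(\zeta_n)=\zeta_n$, contradicting $\sigma\neq Id$) and $-\beta\neq\beta$ in characteristic $\neq 2$, the automorphism $\sigma$ moves $\beta$, whence $\beta\notin F$ and $F(\beta)=F(\zeta_n)$.

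For part (2), in characteristic $2$, write $s=\zeta_n+\zeta_n^{\text{\textlyoghlig}_n}\in F$ (the coefficient appearing in Lemma \ref{minimalpolynomialzetan}) and $\gamma=\zeta_n/s$. I would first note $s\neq 0$: were $s=0$, then $\zeta_n^{\text{\textlyoghlig}_n}=\zeta_n$ in characteristic $2$, again forcing $\sigma=Id$. The key computation is $\sigma(\gamma)=\zeta_n^{\text{\textlyoghlig}_n}/s=(s-\zeta_n)/s=1-\gamma=\gamma+1$, which is exactly the defining property of an Artin--Schreier generator. From it, $\gamma^2+\gamma=\gamma\,\sigma(\gamma)$ is $\sigma$-invariant, so $a:=\gamma^2+\gamma\in F$ and, since $2a=0$, $\gamma$ is a root of the Artin--Schreier polynomial $x^2+x+a$. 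As $\sigma(\gamma)=\gamma+1\neq\gamma$, we have $\gamma\notin F$ and $F(\gamma)=F(\zeta_n)$, so $\gamma$ is an Artin--Schreier generator.

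All the steps are short computations; the only point demanding care is the characteristic-dependent nonvanishing ($\beta\neq 0$ in part (1), $s\neq 0$ in part (2)), and in each case this reduces to the nontriviality of $\sigma$, i.e.\ to the separability established at the start of the section. I expect no genuine obstacle beyond bookkeeping these degenerate cases.
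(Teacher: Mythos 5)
Your proof is correct and takes essentially the same route as the paper's: both arguments rest on Lemma \ref{minimalpolynomialzetan}, use the same two elements $\zeta_n-\zeta_n^{\text{ \textlyoghlig}_n}$ and $\zeta_n/(\zeta_n+\zeta_n^{\text{ \textlyoghlig}_n})$, exploit the nontrivial automorphism $\sigma$ to show these elements lie outside $F$, and identify their degree-two minimal polynomials as radical, resp.\ Artin--Schreier. The only cosmetic difference is that you place the constant terms $\beta^2$ and $\gamma\sigma(\gamma)$ in $F$ by Galois invariance, whereas the paper writes them out explicitly in terms of $\zeta_n+\zeta_n^{\text{ \textlyoghlig}_n}$ and $\zeta_n^{\text{ \textlyoghlig}_n+1}$.
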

\begin{proof}
Suppose that $[F(\zeta_n):F]=2$.
	\begin{enumerate} 
	\item It is enough to prove that $\zeta_n-\zeta_n^{\text{ \textlyoghlig}_n} \notin F$ and that the $\zeta_n-\zeta_n^{\text{ \textlyoghlig}_n}$ is a root of a degree $2$ a radical polynomial over $F$. By Lemma \ref{minimalpolynomialzetan} and Lemma \ref{waring}, we have that $(\zeta_n-\zeta_n^{\text{ \textlyoghlig}_n})^2=\zeta_n^2+\zeta_n^{2{\text{ \textlyoghlig}_n}}-2\zeta_n^{\text{ \textlyoghlig}_n+1}\in F$. We argue by contradiction to prove that $\zeta_n-\zeta_n^{\text{ \textlyoghlig}_n} \notin F$. Suppose that $\zeta_n-\zeta_n^{\text{ \textlyoghlig}_n} \in F$. Since $\sigma(\zeta_n-\zeta_n^{\text{ \textlyoghlig}_n})=\zeta_n^{\text{ \textlyoghlig}_n}-\zeta_n$ and  $\zeta_n-\zeta_n^{\text{ \textlyoghlig}_n}\in F$ then $\zeta_n-\zeta_n^{\text{ \textlyoghlig}_n}=\sigma(\zeta_n-\zeta_n^{\text{ \textlyoghlig}_n})=\zeta_n^{\text{ \textlyoghlig}_n} -\zeta_n$ implies $\zeta_n=\zeta_n^{\text{ \textlyoghlig}_n}= \sigma (\zeta_n)$ implies $\zeta_n \in F$. This is a contradiction since $[F(\zeta_n):F]=2$.  
	\item When $\wp = 2$. Clearly, $\frac{\zeta_n}{\zeta_n+\zeta_n^{\text{ \textlyoghlig}_n}}\notin F$ and it is a root of the Artin-Schreier polynomial $x^2 -x+\frac{\zeta_n^{\text{ \textlyoghlig}_n+1}}{(\zeta_n+\zeta_n^{\text{ \textlyoghlig}_n})^2}$ over $F$. 
	\end{enumerate}
\end{proof}
\subsection{The order of roots of unity defining a quadratic extension}

The following lemma proves that the order of an odd primitive $p$ power primitive root of unity generating a quadratic cyclotomic extension is $p^e$.
\begin{lemma} \label{remark1}
		 If $p$ is an odd prime number and $[F(\zeta_{p^e}):F]=2$, then $\operatorname{d}_F(p^e)=1$. In particular, $\operatorname{o}_F (\zeta_{p^e} )= p^e$. 
\end{lemma}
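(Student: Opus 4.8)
The plan is to play off two earlier results against each other: the divisibility constraint $o(Gal(F(\zeta_n)/F)) \mid \frac{\phi(n)}{\phi(n_F)}$ from Lemma \ref{ordergaloiscyclotomic}, and the identification $n_F = n/o_F(\zeta_n)$ from Remark \ref{n_Fandorder}(1). First I would record the structural facts forced by the hypotheses. Since the notation $\zeta_{p^e}$ is in use, we have $\text{\textctc}_{F,p^e}=0$, so $char(F)\neq p$ and $x^{p^e}-1$ is separable; as $F(\zeta_{p^e})$ is its splitting field, the extension is Galois. Together with $[F(\zeta_{p^e}):F]=2$ (which also forces $e\geq 1$), this gives $o\!\left(Gal(F(\zeta_{p^e})/F)\right)=2$ exactly.

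Next I would exploit that $n_F \mid p^e$, so $n_F = p^{j}$ for some $0\leq j\leq e$. Lemma \ref{ordergaloiscyclotomic} then yields $2 \mid \frac{\phi(p^e)}{\phi(p^{j})}$. The crux is the elementary computation of this quotient: for $j\geq 1$ one has $\frac{\phi(p^e)}{\phi(p^{j})}=\frac{p^{e-1}(p-1)}{p^{j-1}(p-1)}=p^{e-j}$, which is a power of the \emph{odd} prime $p$ and hence odd, so $2\nmid p^{e-j}$. This contradicts the divisibility above. (The degenerate subcase $j=e$, where $\zeta_{p^e}\in F$ and the quotient is $1$, is absorbed here since $p^{e-e}=1$.) Therefore $j=0$, that is $n_F=1$.

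Finally, for the ``in particular'' clause I would simply reinsert the equality $n_F = n/o_F(\zeta_n)$ of Remark \ref{n_Fandorder}(1): since $n_F=1$ is equivalent to $o_F(\zeta_n)=n$, we conclude $o_F(\zeta_n)=p^e$. The argument is essentially bookkeeping once Lemma \ref{ordergaloiscyclotomic} is available, so I expect no serious obstacle; the only point deserving care is justifying that the quadratic extension is genuinely Galois, so that the Galois group has order \emph{exactly} $2$ rather than merely dividing $2$, and pinning down that it is precisely the oddness of $p$ that removes every factor of $2$ from $p^{e-j}$.
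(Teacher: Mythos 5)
Your proposal is correct and follows essentially the same route as the paper's proof: invoke Lemma \ref{ordergaloiscyclotomic}, write $n_F=p^f$, compute $\frac{\phi(p^e)}{\phi(p^f)}=p^{e-f}$ for $f\geq 1$, and derive a contradiction from the oddness of $p$ against the degree $2$, then recover $o_F(\zeta_{p^e})=p^e$ from Remark \ref{n_Fandorder}(1). Your extra care in verifying separability (so the Galois group has order exactly $2$) is a sound refinement of a point the paper leaves implicit, but it does not change the argument.
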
 

\begin{proof} 
This follows directly from Corollary \ref{degree-odd}.
%By Theorem \ref{ordergaloiscyclotomic} we know that $o\left(Gal(F(\zeta_{p^e})/F)\right)=[F(\zeta_{p^e}):F]|\frac{\phi(p^e)}{\phi(\operatorname{d}_F(p^e))}$. Since $\operatorname{d}_F(p^e)|p^e$, then $\operatorname{d}_F(p^e)=p^f$ for some $f\leq e$. We argue by contradiction and assume that $f>0$. Then, $\frac{\phi(p^e)}{\phi(\operatorname{d}_F(p^e))}=\frac{\phi(p^e)}{\phi(p^f)}=p^{e-f}$. This is imposible since $p$ is odd. Hence, $f=0$ so that $\operatorname{d}_F(p^e)=1$. As a consequence, $\operatorname{o}_F (\zeta_{p^e} )= p^e$, since $\operatorname{o}_F(\zeta_{p^e}) = \frac{{p^e}}{\operatorname{d}_F(\zeta_{p^e})}$.
\end{proof}

In our effort to understand quadratic cyclotomic extensions over the field $F$ generated by $2$ power primitive root of unity, we also obtain that their order can only be either $2$ or $2^{e-1}$, this result is key in the discussion to follow. 
\begin{theorem}\label{order-zeta2e}
	Let $e > 1$.  If $[F(\zeta_{2^e}):F]=2$, then $\operatorname{o}_F(\zeta_{2^e})\in \{ 2, 2^{e-1}\}$.
\end{theorem}
\begin{proof}
	Let $[F(\zeta_{2^e}):F]=2$, $\operatorname{o}_F(\zeta_{2^e})=2^t$ and $\sigma$ be a non trivial automorphism in $Gal (F(\zeta_{2^e})/F)$. We have $t\geq 1$, since $\zeta_{2^e}\notin F$. 
	By definition of the order, we have $\zeta_{2^e}^{2^{t-1}}\notin F$ and $\zeta_{2^e}^{2^{t}}\in F$.
	So, $[F(\zeta_{2^e}^{2^{t-1}}):F]=2$ and $x^2-\zeta_{2^e}^{2^t}$ is the minimal polynomial of $\zeta_{2^e}^{2^{t-1}}$ over $F$. Therefore, $F(\zeta_{2^e}^{2^{t-1}})=F(\zeta_{2^e})$ and $\sigma(\zeta_{2^e}^{2^{t-1}})=-\zeta_{2^e}^{2^{t-1}}=\zeta_{2^e}^{2^{e-1}+2^{t-1}}$. We know that $\sigma(\zeta_{2^e})=\zeta_{2^e}^{\text{ \textlyoghlig}_{2^e}}$ and $\text{ \textlyoghlig}_{2^e}\in \{1,\cdots, 2^e-1\}$ with $(\text{ \textlyoghlig}_{2^e}, 2^e)=1$. 
	Hence,  $\sigma(\zeta_{2^e}^{2^{t-1}})=\sigma(\zeta_{2^e})^{2^{t-1}}=\zeta_{2^e}^{2^{t-1}\text{ \textlyoghlig}_{2^e}}$. It follows that $\zeta_{2^e}^{2^{e-1}+2^{t-1}}= \zeta_{2^e}^{2^{t-1}\text{ \textlyoghlig}_{2^e}}$. Therefore, $\text{\textlyoghlig}_{2^e}2^{t-1}\equiv 2^{e-1}+2^{t-1}\mod 2^e$  and so $\text{ \textlyoghlig}_{2^e}\equiv 2^{e-t}+1 \mod 2^{e-t+1}$. This implies that $\text{\textlyoghlig}_{2^e}+1= 2+2^{e-t}+2^{e-t+1}r$ for some $r\in \mathbb{Z}$. Moreover, we know by Lemma \ref{minimalpolynomialzetan},  $\operatorname{o}_F( \zeta_{2^e})= 2^t|\text{ \textlyoghlig}_{2^e}+1$. As a result, $2+2^{e-t}+2^{e-t+1}r\equiv 0 \mod 2^{t}$.  This implies $2^{e-t-1}+2^{e-t}r+1\equiv 0 \mod 2^{t-1}$ which implies $2^{e-t-1}(-1-2r)\equiv 1 \mod 2^{t-1}$. As a consequence, $2^{e-t-1}$ has a multiplicative inverse modulo $2^{t-1}$. But, when $t\neq 1$, $2^{e-t-1}$ and $2^{t-1}$ are not coprime if and only if $e-t-1=0$. That is $t=e-1$. As a conclusion, we have either $t=1$ or $t = e-1$. Hence, we have proven the result.       
\end{proof}

The next corollary computes $(\operatorname{d}_F(n) , \operatorname{o}_F (\zeta_n) )$ when $\zeta_n$ defines a quadratic cyclotomic extension. 
\begin{corollary} \label{coprime}
Let $n\in \mathbb{N}$. If $[F(\zeta_n):F]=2$, then we have 
\begin{enumerate} 
\item $(\operatorname{d}_F(n) , \operatorname{o}_F (\zeta_n) ) = 1,$ when $\zeta_{2^{\text{\textepsilon}_2(n)}}\in F$;
\item $(\operatorname{d}_F(n) , \operatorname{o}_F (\zeta_n) ) = 2,$ otherwise
\end{enumerate}
\end{corollary}

\begin{proof} 
\begin{enumerate} 
\item We assume, for the sake of contradiction, that there exists $p$ prime number such that $p | (\operatorname{d}_F(n) , \operatorname{o}_F(\zeta_n))$. Then $p$ is an odd prime number, since $\zeta_{2^{\text{\textepsilon}_2(n)}}\in F$. Since $p | \operatorname{o}_F( \zeta_n)$, it follows that $p^{\text{\textepsilon}_n (p)} =\operatorname{o}_F(\zeta_{p^{\text{\textepsilon}_n (p)}}) \parallel \operatorname{o}_F (\zeta_n )  $, as established in Lemma \ref{remark1}. However, then $p$ cannot divide $\operatorname{d}_F(n)$, since $\operatorname{d}_F(n) = \frac{{n}}{\operatorname{o}_F( \zeta_n)}$.
\item When $\text{\textepsilon}_n (2) > 1$. We have  $ (\operatorname{d}_F(n), \operatorname{o}_F(\zeta_n))=2$ by Lemma \ref{order-zeta2e} and (1). 
\end{enumerate}
\end{proof}
The following result is a direct consequence of Corollary \ref{coprime}
\begin{corollary}
Let $n \in \mathbb{N}$. If $[F(\zeta_n):F]=2$, we have 
 \begin{enumerate}
\item $\mu_{\operatorname{d}_F(n)}\cap \mu_{\operatorname{o}_F(\zeta_n)}=\{1\}$ when $\zeta_{2^{\text{\textepsilon}_n(2)}}\in F$;  
\item $\mu_{\operatorname{d}_F(n)}\cap \mu_{\operatorname{o}_F(\zeta_n)}=\mu_2$, otherwise.
\end{enumerate}
\end{corollary}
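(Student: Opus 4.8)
The plan is to reduce the whole statement to a single $\gcd$ computation inside the cyclic group $\mu_n$ and then to isolate the contribution of the prime $2$. Since $\text{\textctc}_{F,n}=0$, the group $\mu_n$ is cyclic of order $n$, so for any divisors $a,b\mid n$ one has $\mu_a\cap\mu_b=\mu_{\gcd(a,b)}$. Both $n_F$ and $o_F(\zeta_n)$ divide $n$, hence
$$\mu_{n_F}\cap\mu_{o_F(\zeta_n)}=\mu_{\gcd(n_F,\,o_F(\zeta_n))}.$$
By Remark \ref{n_Fandorder}(1) we have $n=n_F\cdot o_F(\zeta_n)$, so the statement is equivalent to showing that $\gcd(n_F,o_F(\zeta_n))$ equals $1$ in case (1) and $2$ in case (2); I claim only its $2$-part can be nontrivial.

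I would dispose of the odd primes exactly as in Corollary \ref{coprime}. Fix an odd prime $p\mid n$. If $p\nmid o_F(\zeta_n)$, Lemma \ref{notdivide} gives $\zeta_{p^{\text{\textepsilon}_n(p)}}\in F$, so $p^{\text{\textepsilon}_n(p)}\mid n_F$ while $p\nmid o_F(\zeta_n)$; if instead $p\mid o_F(\zeta_n)$, then Lemma \ref{allcyclotomicofdegree2}(2) gives $p^{\text{\textepsilon}_n(p)}\parallel o_F(\zeta_n)$, whence $p\nmid n_F$. Either way $\min(\text{\textepsilon}_{n_F}(p),\text{\textepsilon}_{o_F(\zeta_n)}(p))=0$, so no odd prime divides the $\gcd$, and therefore
$$\gcd(n_F,o_F(\zeta_n))=2^{\min(\,\text{\textepsilon}_{n_F}(2),\,\text{\textepsilon}_{o_F(\zeta_n)}(2)\,)}.$$

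The core is the $2$-part. Write $e=\text{\textepsilon}_n(2)$, $a=\text{\textepsilon}_{o_F(\zeta_n)}(2)$ and $b=\text{\textepsilon}_{n_F}(2)$, so $a+b=e$ from the factorization $n=n_F\,o_F(\zeta_n)$. Passing to $\zeta_{2^e}=\zeta_n^{\,n/2^e}\in F(\zeta_n)$, the order formula \cite[Theorem 3.13]{conrad2014orders} gives $o_F(\zeta_{2^e})=o_F(\zeta_n)/\gcd(o_F(\zeta_n),n/2^e)$, whose value is $2^{a}$ since $n/2^e$ is odd and $o_F(\zeta_{2^e})\mid 2^e$. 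When $e\leq1$ we have $a+b\leq1$, so $\min(a,b)=0$ and the intersection is $\{1\}$, which is case (1). When $a\geq1$ the computation shows $\zeta_{2^e}\notin F$, so $[F(\zeta_{2^e}):F]=2$ and Lemma \ref{order-zeta2e} forces $a\in\{1,e-1\}$; for $e\geq2$ this makes both $a\geq1$ and $b=e-a\geq1$, hence $\min(a,b)=1$ and the intersection is $\mu_2$, which is case (2).

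The step I expect to fight with is exactly this $2$-part, and the subtlety is that $a+b=e$ does not by itself force $\min(a,b)=1$ once $e\geq2$: a priori one could have $a=0$, i.e. $\zeta_{2^{\text{\textepsilon}_n(2)}}\in F$, and then the intersection degenerates to $\{1\}$ even though $\text{\textepsilon}_n(2)\geq2$. The decisive input that removes the other possibilities is Lemma \ref{order-zeta2e}, which guarantees that once the $2$-primary part contributes to the quadratic extension (equivalently $2\mid o_F(\zeta_n)$), its valuation is $1$ or $e-1$, so that $a$ and $b=e-a$ are both positive with minimum $1$. I would therefore make explicit, at the start of case (2), the hypothesis $2\mid o_F(\zeta_n)$ (equivalently $\zeta_{2^{\text{\textepsilon}_n(2)}}\notin F$), since this is what the conclusion $\mu_{n_F}\cap\mu_{o_F(\zeta_n)}=\mu_2$ genuinely needs.
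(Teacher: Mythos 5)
Your proof is correct, and its skeleton matches the paper's: the odd primes are excluded from the intersection by the same inputs the paper uses (Lemma \ref{notdivide} and Lemma \ref{allcyclotomicofdegree2}, which together are the content of Corollary \ref{coprime}), and the $2$-part is pinned down by Lemma \ref{order-zeta2e}. Your packaging is cleaner, though: identifying $\mu_{n_F}\cap\mu_{o_F(\zeta_n)}$ with $\mu_{\gcd(n_F,o_F(\zeta_n))}$ inside the cyclic group $\mu_n$ and exploiting $n=n_F\,o_F(\zeta_n)$ from Remark \ref{n_Fandorder} lets you treat $n$ odd and $2\parallel n$ uniformly via valuations, whereas the paper splits case (1) into two subcases and handles $2\parallel n$ separately through the factorization $\zeta_n=\zeta_2\zeta_m$.

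The more important point is the caveat you raise at the end: it is not mere caution, it exposes a genuine gap --- in the paper, not in your argument. The paper's proof of case (2) asserts $(n_F,o_F(\zeta_n))=2$ by Lemma \ref{order-zeta2e} and Corollary \ref{coprime}, but Lemma \ref{order-zeta2e} requires $[F(\zeta_{2^{\text{\textepsilon}_n(2)}}):F]=2$, which fails exactly in the scenario you isolate, namely $\zeta_{2^{\text{\textepsilon}_n(2)}}\in F$ (your $a=0$). This scenario really occurs: take $F=\mathbb{Q}(i)$ and $n=12$. Then $F(\zeta_{12})=F(\zeta_3)$ has degree $2$ over $F$ and $\text{\textepsilon}_{12}(2)=2>1$, yet $n_F=4$ and $o_F(\zeta_{12})=3$, so $\mu_{n_F}\cap\mu_{o_F(\zeta_n)}=\mu_4\cap\mu_3=\{1\}\neq\mu_2$. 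So case (2) of the corollary is false as stated, the paper's claim $(n_F,o_F(\zeta_n))=2$ failing for this example. Your proposed repair --- adding to case (2) the hypothesis $2\mid o_F(\zeta_n)$, equivalently $\zeta_{2^{\text{\textepsilon}_n(2)}}\notin F$, with the intersection degenerating to $\{1\}$ otherwise --- is exactly what is needed; under that hypothesis your appeal to Lemma \ref{order-zeta2e} is legitimate, $\min(a,b)=1$ follows, and the argument closes.
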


\subsection{Properties of a cyclotomic extension of degree 2}

In the following lemma, we describe quadratic a cyclotomic extension generated by a $p$ power primitive root of unity when $p$ is an odd prime.

\begin{lemma}\label{allcyclotomicofdegree2}
Let $n \in \mathbb{N}$ and $[F(\zeta_n):F]=2$. Let $p$ be an odd prime number dividing $n$ such that $p|\operatorname{o}_F(\zeta_n)$. Then:
\begin{enumerate}
	\item $F(\zeta_n ) = F(\zeta_{p^t})$ for all $t \in \{ 1, \cdots , \text{\textepsilon}_n (p)\}$;
	\item $p^{\text{\textepsilon}_n (p)} \parallel \operatorname{o}_F( \zeta_n)$ and $\operatorname{o}_F(\zeta_{p^t})=p^t$ for all $t\in \{1, \cdots, \text{\textepsilon}_n (p) \}$;
	\item The minimal polynomial of $\zeta_{p^t}$ over $F$ is $x^2-(\zeta_{p^t}+\zeta_{p^t}^{-1})x+1$ for all $t \in \{ 1, \cdots, \text{\textepsilon}_n (p) \}.$
\end{enumerate} 
\end{lemma} 
\begin{proof}
\begin{enumerate}
	\item For all $t\in \{1, \cdots, \text{\textepsilon}_n (p) \}$, $F(\zeta_{p})\subseteq F(\zeta_{p^t}) \subseteq F(\zeta_n)$. Since $p|\operatorname{o}_F(\zeta_n)$, $\zeta_p\in F(\zeta_n)\setminus F$, $[F(\zeta_p):F]=2$. Therefore, $F(\zeta_{p^t})=F(\zeta_n)$, for all $t\in \{1, \cdots, \text{\textepsilon}_n (p)\}$.
		
	\item  This follows directly from $(1)$ and Lemma \ref{remark1}.
		\item 
		Since $[F(\zeta_{p^t}):F]=2$, for all $t\in \{1, \cdots, \text{\textepsilon}_n (p)\}$, then, by Lemma \ref{minimalpolynomialzetan}, we have that the minimal polynomial of $\zeta_{p^t}$ is $x^2-(\zeta_{p^t}+\zeta_{p^t}^{\text{ \textlyoghlig}_{p^t}})x+\zeta_{p^{t}}^{\text{ \textlyoghlig}_{p^t}+1}$ where $\text{ \textlyoghlig}_{p^t}\in \{1, \cdots, p^t-1\}$ with $(\text{ \textlyoghlig}_{p^t}, p^t)=1$ and $\operatorname{o}_F(\zeta_{p^{t}})|(\text{ \textlyoghlig}_{p^t}+1)$. By (2) above we have  $\operatorname{o}_F(\zeta_{p^{t}})=p^t$ which implies that $\text{ \textlyoghlig}_{p^t} \equiv -1 \mod p^t$. Therefore, the minimal polynomial of $F(\zeta_{p^t})$ is $x^2-(\zeta_{p^t}+\zeta_{p^t}^{-1})x+1$ for all $t\in \{1, \cdots, \text{\textepsilon}_n (p)\}$ as desired. 
	
	\end{enumerate} 
\end{proof}

In the next result, we provide a description of the possible minimal polynomials of a primitive $2$ power primitive root of unity generating a quadratic cyclotomic extension over $F$.
\begin{lemma}\label{valuesofkfor2^e}
	Let $e\in \mathbb{N}$ and suppose that $[F(\zeta_{2^e}):F]=2$. Then one of the following assertion is satisfied.
	
	\begin{enumerate}
		\item $\text{ \textlyoghlig}_{2^e}\equiv 1+2^{e-1} \mod 2^e$ and $\operatorname{o}_F ( \zeta_{2^e})=2$. In particular, $\operatorname{min}(\zeta_{2^e}, F)=x^2-\zeta_{2^e}^2$. 
		\item $F(\zeta_{2^e})=F(\zeta_{2^e}^{2^j})$ for all $j\in \{1, \cdots, e-2\}$, 
		$\operatorname{o}_F ( \zeta_{2^e})=2^{e-1}$, and either 
		\begin{enumerate}
			\item $\text{ \textlyoghlig}_{2^e} \equiv 2^{e-1}-1\mod 2^e$. In particular, $\operatorname{min}(\zeta_{2^e}, F)=x^2-(\zeta_{2^e}-\zeta_{2^e}^{-1})x-1$ and $\zeta_{2^e}-\zeta_{2^e}^{-1}\in F$, 
			or
			\item $\text{ \textlyoghlig}_{2^e} \equiv -1\mod 2^e$. In particular, $\operatorname{min}(\zeta_{2^e}, F)=x^2-(\zeta_{2^e}+\zeta_{2^e}^{-1})x+1$ and $\zeta_{2^e}+\zeta_{2^e}^{-1}\in F$. 
		\end{enumerate}
	\end{enumerate}
\end{lemma}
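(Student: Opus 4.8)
The plan is to combine the two facts already established about $\zeta := \zeta_{2^e}$: that its minimal polynomial has the shape prescribed by Lemma \ref{minimalpolynomialzetan}, and that its order over $F$ is severely restricted by Lemma \ref{order-zeta2e}. First I would record that $F(\zeta)/F$ is separable (as all quadratic cyclotomic extensions are, the very existence of a primitive $2^e$-th root of unity forcing $char(F)\neq 2$), so Lemma \ref{minimalpolynomialzetan} applies and gives the unique $k\in\{1,\dots,2^e-1\}$ with $(k,2^e)=1$ and minimal polynomial $x^2-(\zeta+\zeta^k)x+\zeta^{k+1}$, where $\zeta^k=\sigma(\zeta)$ for the nontrivial $\sigma\in Gal(F(\zeta)/F)$. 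The constant term $\zeta^{k+1}=\zeta\,\sigma(\zeta)$ lies in $F$, so by the definition of order $o_F(\zeta)\mid k+1$. By Lemma \ref{order-zeta2e}, $o_F(\zeta)=2^t$ with $t=1$ or $t=e-1$, and the argument splits along these two values; in each case the real task is to upgrade the congruence $2^t\mid k+1$ to a precise determination of $k$ modulo $2^e$.

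In the case $t=1$ I would argue directly rather than through the divisibility. Here $o_F(\zeta)=2$ means $\zeta^2\in F$, while $\zeta\notin F$ since the extension is quadratic; hence $x^2-\zeta^2$ is the minimal polynomial of $\zeta$ over $F$. Comparing its roots $\pm\zeta$ with the pair $\zeta,\zeta^k$ forces $\zeta^k=-\zeta=\zeta^{2^{e-1}+1}$, i.e. $k\equiv 1+2^{e-1}\pmod{2^e}$, which is exactly assertion (1), with $\operatorname{min}(\zeta,F)=x^2-\zeta^2$.

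In the case $t=e-1$ (so $e\geq 3$) I would instead exploit $2^{e-1}\mid k+1$, i.e. $k\equiv -1\pmod{2^{e-1}}$. Among the residues modulo $2^e$ there are exactly two classes congruent to $-1$ modulo $2^{e-1}$, namely $k\equiv -1$ and $k\equiv 2^{e-1}-1\pmod{2^e}$ (both automatically coprime to $2^e$), yielding the dichotomy (2)(a)/(2)(b). For $k\equiv -1$ one reads off $\zeta+\zeta^{-1}\in F$ and $\zeta^{k+1}=1$, giving $\operatorname{min}(\zeta,F)=x^2-(\zeta+\zeta^{-1})x+1$; for $k\equiv 2^{e-1}-1$ one uses $\zeta^{2^{e-1}}=-1$ to get $\zeta^k=-\zeta^{-1}$ and $\zeta^{k+1}=-1$, giving $\operatorname{min}(\zeta,F)=x^2-(\zeta-\zeta^{-1})x-1$ with $\zeta-\zeta^{-1}\in F$. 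Finally, for the field equalities I would note that $\zeta^{2^j}$ is a primitive $2^{e-j}$-th root of unity with $o_F(\zeta^{2^j})=2^{e-1}/(2^{e-1},2^j)=2^{e-1-j}\geq 2$ for every $j\in\{1,\dots,e-2\}$ (by the order formula of \cite[Theorem 3.13]{conrad2014orders}); thus $\zeta^{2^j}\notin F$, and since $F(\zeta^{2^j})\subseteq F(\zeta)$ with $[F(\zeta):F]=2$, equality $F(\zeta^{2^j})=F(\zeta)$ follows.

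The only genuinely delicate point is the exact determination of $k$ modulo $2^e$: the norm condition $2^t\mid k+1$ alone is too weak when $t=1$ (it says only that $k$ is odd), which is why that case is best settled by identifying the minimal polynomial outright as $x^2-\zeta^2$. I would also keep an eye on the boundary $e=2$, where $t=1$ and $t=e-1$ coincide and case (2) degenerates (the index set $\{1,\dots,e-2\}$ is empty and the only admissible $k$ is $k\equiv 3\pmod 4$); there the statement collapses to assertion (1), consistently with the $t=1$ analysis.
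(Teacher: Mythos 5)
Your proposal is correct and follows essentially the same route as the paper's proof: invoke Lemma \ref{minimalpolynomialzetan} for the shape and uniqueness of $k$, use Lemma \ref{order-zeta2e} to restrict $o_F(\zeta_{2^e})$ to $2$ or $2^{e-1}$, settle the order-$2$ case by identifying the minimal polynomial as $x^2-\zeta_{2^e}^2$ outright, and in the order-$2^{e-1}$ case upgrade $2^{e-1}\mid k+1$ to the two residues $k\equiv -1$ and $k\equiv 2^{e-1}-1 \pmod{2^e}$. Your minor variations (comparing roots rather than coefficients in case (1), computing $o_F(\zeta_{2^e}^{2^j})$ via the order formula instead of citing the definition of order, and flagging the degenerate boundary $e=2$) are sound and do not change the substance of the argument.
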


\begin{proof}
	Suppose that $[F(\zeta_{2^e}):F]=2$. Since $\zeta_{2} \in F$, we have $e>1$. Then by Lemma \ref{minimalpolynomialzetan} $\operatorname{min}(\zeta_{2^e}, F)=x^2-(\zeta_{2^e}+\zeta_{2^e}^{\text{ \textlyoghlig}_{2^e}})x+\zeta_{2^e}^{\text{ \textlyoghlig}_{2^e}+1}$ where $\text{ \textlyoghlig}_{2^e} \in \{1, \cdots, 2^e-1\}$ with $(\text{ \textlyoghlig}_{2^e} , 2^e)=1$. Also by Lemma \ref{order-zeta2e} we have either $\operatorname{o}_F(\zeta_{2^e})=2$ or $\operatorname{o}_F(\zeta_{2^e})=2^{e-1}$.	
	\begin{enumerate}
		\item is clear. %Suppose that $\operatorname{o}_F(\zeta_{2^e})=2$. Then, $x^2-\zeta_{2^e}^2$ is the minimal polynomial of  $\zeta_{2^e}$ over $F$, and $\text{ \textlyoghlig}_{2^e} \equiv 1+2^{e-1} \mod 2^e$.		
		 \item Suppose that $\operatorname{o}_F(\zeta_{2^e})=2^{e-1}$. 
		By definition of the order of $\zeta_{2^e}$ we have that for all $j\in \{1, \cdots, e-2\}$, $\zeta_{2^e}^{2^j}\in F(\zeta_{2^e})\setminus F$. Therefore, $F(\zeta_{2^e})=F(\zeta_{2^e}^{2^j})$ for all $j\in \{1, \cdots, e-2\}$. Since $\zeta_{2^e}^{\text{ \textlyoghlig}_{2^e}+1} \in F$ then $\operatorname{o}_F(\zeta_{2^e})|\text{ \textlyoghlig}_{2^e}+1$ by Lemma \ref{minimalpolynomialzetan} which implies that $2^{e-1}|\text{ \textlyoghlig}_{2^e}+1$. So that $\text{ \textlyoghlig}_{2^e}+1\equiv 2^{e-1}s \mod 2^e$ where $s\in \{1, 2\}$. In particular, 
		\begin{enumerate}
			\item When $s=1$, then $\text{ \textlyoghlig}_{2^e} +1\equiv 2^{e-1}\mod 2^e$ so that $\text{ \textlyoghlig}_{2^e} \equiv 2^{e-1}-1\mod 2^e$. Thus, $\operatorname{min}(\zeta_{2^e}, F)=x^2-(\zeta_{2^e}+\zeta_{2^e}^{2^{e-1}-1})x+\zeta_{2^e}^{2^{e-1}}=x^2-(\zeta_{2^e}-\zeta_{2^e}^{-1})x-1$ and $\zeta_{2^e}-\zeta_{2^e}^{-1} \in F$.
			\item When $s=2$, then $\text{ \textlyoghlig}_{2^e} +1\equiv 2^e \mod 2^e$ which implies that $\text{ \textlyoghlig}_{2^e} \equiv -1 \mod 2^e$. Therefore, $\operatorname{min}(\zeta_{2^e}, F)=x^2-(\zeta_{2^e}+\zeta_{2^e}^{-1})x+1$ and $\zeta_{2^e}+\zeta_{2^e}^{-1} \in F$.
		\end{enumerate} 
	\end{enumerate}  
\end{proof}
We can deduce the following Corollary which determines when two quadratic cyclotomic extensions generated by $2$ power primitive roots of unity can be equal. 
\begin{corollary}\label{radical-e2}
	Let $e \in \mathbb{N}$. If $F(\zeta_{2^e})/F$ is a quadratic extension and $\operatorname{o}_F(\zeta_{2^e})=2$, 
	%generated by a radical element $\zeta_{2^e}$,
	then we cannot have $F(\zeta_{2^e})=F(\zeta_{2^f})$ with $f>e$ except when $e=2$. 
\end{corollary}
\begin{proof}
	Suppose that $[F(\zeta_{2^e}):F]=2$ and $\operatorname{o}_F(\zeta_{2^e})=2$. 
	Using contradiction suppose that 
	%We use contradiction to prove the lemma. We assume that
	$F(\zeta_{2^e})=F(\zeta_{2^f})$ with $f>e$ and $e>2$. Then $[F(\zeta_{2^f}):F]=2$ since $[F(\zeta_{2^e}):F]=2$. But $\operatorname{o}_F(\zeta_{2^f})\neq 2$ since otherwise, it would imply that $\zeta_{2^e}\in F$ which is a contradiction. By Theorem \ref{order-zeta2e}, $\operatorname{o}_F(\zeta_{2^f})= 2^{f-1}$ providing us with the result.
%	Thus, by Lemma \ref{valuesofkfor2^e}, we have $\zeta_{2^f}+\zeta_{2^f}^{-1}\in F$ or $\zeta_{2^f}-\zeta_{2^f}^{-1}\in F$.  Both cases implies that $\zeta_{2^e}+\zeta_{2^e}^{-1}\in F$, by Lemma \ref{waring}, noting that $\zeta_{2^f}-\zeta_{2^f}^{-1}\in F$, we have $\zeta_{2^{f-1}}+\zeta_{2^{f-1}}^{-1}=(\zeta_{2^f}-\zeta_{2^f}^{-1})^2+2\in F$. As a consequence, $\zeta_{2^e}$ is a root of an irreducible polynomial $x^2-(\zeta_{2^e}+\zeta_{2^e}^{-1})x+1$ over $F$. Since $\operatorname{min}(\zeta_{2^e}, F)=x^2-(\zeta_{2^e})^2$, by the uniqueness of the minimal polynomial, $\zeta_{2^e}+\zeta_{2^e}^{-1}=0$ and $(\zeta_{2^e})^2=-1$. But this is impossible since $\zeta_{2^{e-1}}\neq -1$ as $e>2$. Therefore, we can only have $F(\zeta_{2^e})=F(\zeta_{2^f})$ with $f>e$ when $e=2$ as claimed. 
\end{proof}
From the following corollary, we learn that as soon as a quadratic cyclotomic extension is generated by two distinct $2$ power primitive roots of unity, we have that the cyclotomic extension is generated by $\zeta_4$. 
\begin{corollary}\label{lemma-equal}
	We suppose there is $e\in \mathbb{N}$ such that $\zeta_{2^e}\notin F$ and either $\zeta_{2^e}+\zeta_{2^e}^{-1}\in F$ or $\zeta_{2^e}-\zeta_{2^e}^{-1}\in F$.  Then $F(\zeta_{2^e})=F(\zeta_4)$. %In particular, $\ell_{2^\infty_F}=1$.
\end{corollary}

\begin{proof}
	Suppose that there is $e\in \mathbb{N}$ such that $\zeta_{2^e}\notin F$ and either $\zeta_{2^e}+\zeta_{2^e}^{-1}\in F$ or $\zeta_{2^e}-\zeta_{2^e}^{-1}\in F$. Then this implies that $\zeta_{2^e}$ is a root of an irreducible polynomial $x^2-(\zeta_{2^e}+\zeta_{2^e}^{-1})x+1$ or $x^2-(\zeta_{2^e}-\zeta_{2^e}^{-1})x-1$ over $F$. So that $[F(\zeta_{2^e}):F]=2$ and $\operatorname{o}_F(\zeta_{2^e})=2^{e-1}$ by Lemma \ref{valuesofkfor2^e}. That implies that $\zeta_4\notin F$. Since $[F(\zeta_4):F]=2$, thus we obtain $F(\zeta_{2^e})=F(\zeta_4)$. %In particular, since $\zeta_4\notin F$, $\ell_{2^\infty_F}=1$.  
\end{proof}

We reach the main theorem of this section, which, on one hand, explicitly computes the minimal polynomials for the primitive root of unity that generates a quadratic cyclotomic extension. On the other hand, it compiles most of the results from this section. It is worthy noting that the coefficient of the minimal polynomial of a root of unity $\zeta_n$, defining a quadratic extension, can be entirely expressed in terms of the constants $\operatorname{o}_F(\zeta_n)$ and $\operatorname{d}_F(n)$. This provides a compelling proof of their importance in this narrative.

\begin{theorem}\label{theorem-valueofk}
Let $n\in \mathbb{N}$, $[F(\zeta_n): F]=2$, and $\sigma$ be the non-trivial element in $Gal( F(\zeta_n) /F)$. Then 
\begin{enumerate}
  \item when $n$ is odd, or $n$ is even and $\zeta_{2^{\text{\textepsilon}_n (2)}}\in F$,  we have
  $$F(\zeta_n ) = F( \zeta_{\operatorname{o}_F(\zeta_n)}), \ \sigma ( \zeta_n ) = \zeta_{\operatorname{d}_F(n)} \zeta_{\operatorname{o}_F(\zeta_n)}^{-1},$$ and
  $$\operatorname{min}( \zeta_n , F) = x^2 - \zeta_{\operatorname{d}_F(n)} \left ( \zeta_{ \operatorname{o}_F(\zeta_n)} +\zeta_{ \operatorname{o}_F(\zeta_n)}^{-1}\right)x + \zeta_{\operatorname{d}_F(n)}^2.$$
    \item when  $2\parallel \operatorname{o}_F(\zeta_n)$,
  $$F(\zeta_n ) = F( \zeta_{2^{\text{\textepsilon}_n (2)-1}\operatorname{o}_F(\zeta_n)}),\ \sigma ( \zeta_n ) = \zeta_{2\operatorname{d}_F(n)} \zeta_{\operatorname{o}_F(\zeta_n)}^{-1},$$ and
  $$\operatorname{min}( \zeta_n , F) =  x^2 - \zeta_{2\operatorname{d}_F(n)} \left(\zeta_{\operatorname{o}_F(\zeta_n)} - \zeta_{\operatorname{o}_F(\zeta_n)}^{-1}\right) x - \zeta_{2\operatorname{d}_F(n)}^2.$$
  \item when $\text{\textepsilon}_n (2)>2$, $2^{\text{\textepsilon}_n (2)-1}\parallel \operatorname{o}_F(\zeta_n)$, and $\zeta_{2^{\text{\textepsilon}_n (2)}} + \zeta_{2^{\text{\textepsilon}_n (2)}}^{-1} \in F$, we have $$F(\zeta_n ) = F( \zeta_{2\operatorname{o}_F(\zeta_n)})= F(\zeta_4), \ \sigma ( \zeta_n ) = -\zeta_{\operatorname{d}_F(n)} \zeta_{2\operatorname{o}_F(\zeta_n)}^{-1},$$
  and
  $$\operatorname{min}(\zeta_n, F) = x^2 - \zeta_{\operatorname{d}_F(n)} \left( \zeta_{ 2\operatorname{o}_F(\zeta_n)} +\zeta_{ 2\operatorname{o}_F(\zeta_n)}^{-1}\right) x + \zeta_{\operatorname{d}_F(n)}^2.$$
  \item  when $\text{\textepsilon}_n (2)>2$, $2^{\text{\textepsilon}_n (2)-1}\parallel \operatorname{o}_F(\zeta_n)$, and $\zeta_{2^{\text{\textepsilon}_n (2)}} - \zeta_{2^{\text{\textepsilon}_n (2)}}^{-1} \in F$, we have 
  $$F(\zeta_n ) = F( \zeta_{2\operatorname{o}_F(\zeta_n)})= F(\zeta_4), \ \sigma (\zeta_n ) = \zeta_{\operatorname{d}_F(n)} \zeta_{2\operatorname{o}_F(\zeta_n)}^{-1},$$ and
  $$\operatorname{min}( \zeta_n , F) = x^2 - \zeta_{\operatorname{d}_F(n)} \left( \zeta_{2 \operatorname{o}_F(\zeta_n)} - \zeta_{2 \operatorname{o}_F(\zeta_n)}^{-1}\right)x - \zeta_{\operatorname{d}_F(n)}^2.$$
\end{enumerate}
\end{theorem}

\begin{proof}
	\begin{enumerate}
		\item Suppose that either $n$ is odd, or $n$ is even and $\zeta_{2^{\text{\textepsilon}_n (2)}}\in F$.
		%  Since  $\zeta_n^{{\text{\textlyoghlig}_n}+1}\in F$ from
		By Lemma \ref{minimalpolynomialzetan}, we know that $\operatorname{o}_F(\zeta_n)|{\text{\textlyoghlig}_n}+1$.  That implies that ${\text{\textlyoghlig}_n}\equiv -1 \mod \operatorname{o}_F(\zeta_n).$ Now,  since $\zeta_{\operatorname{d}_F(n)}\in F$, $\zeta_{\operatorname{d}_F(n)}=\sigma(\zeta_{\operatorname{d}_F(n)})=\sigma(\zeta_{n}^{\operatorname{o}_F(\zeta_n)}) =\zeta_{n}^{{\text{\textlyoghlig}_n}\operatorname{o}_F(\zeta_n)}=\zeta_{\operatorname{d}_F(n)}^{\text{\textlyoghlig}_n}$, and ${\text{\textlyoghlig}_n}\equiv 1 \mod \operatorname{d}_F(n).$ 
		By Corollary \ref{coprime}, we have $( \operatorname{o}_F(\zeta_n), \operatorname{d}_F(n) )=1$, and thus, we obtain $\zeta_n=\zeta_{\operatorname{d}_F(n)}\zeta_{\operatorname{o}_F(\zeta_n)}$. Thus, $\sigma ( \zeta_n ) =\sigma(\zeta_{\operatorname{d}_F(n)} \zeta_{\operatorname{o}_F(\zeta_n)}) = \zeta_{\operatorname{d}_F(n)} \zeta_{\operatorname{o}_F(\zeta_n)}^{-1}$ %Moreover, we have $F(\zeta_n) =F(\zeta_{\operatorname{o}_F (\zeta_n)})$.
		%		 where $\iota_{\operatorname{d}_F(n), \operatorname{o}_F(\zeta_n)}$ is a representative of a multiplicative inverse of the class of $\operatorname{d}_F(n)$ modulo $\operatorname{o}_F(\zeta_n)$ and $\iota_{\operatorname{o}_F(\zeta_n), \operatorname{d}_F(n)}$ is a representative of a multiplicative inverse of the class of $\operatorname{o}_F(\zeta_n)$ modulo $\operatorname{d}_F(n)$ because $(\operatorname{d}_F(n), \operatorname{o}_F(\zeta_n))=1$. 
		%As a consequence, $\sigma(\zeta_n)=\zeta_{\operatorname{d}_F(n)}^{\text{\textlyoghlig}_n}\zeta_{\operatorname{o}_F(\zeta_n)}^{\text{\textlyoghlig}_n}=\zeta_{\operatorname{d}_F(n)}\zeta_{\operatorname{o}_F(\zeta_n)}^{-1}$ and $$\zeta_n\sigma(\zeta_n)=(\zeta_{\operatorname{d}_F(n)}\zeta_{\operatorname{o}_F(\zeta_n)})(\zeta_{\operatorname{d}_F(n)}\zeta_{\operatorname{o}_F(\zeta_n)}^{-1})=\zeta_{\operatorname{d}_F(n)}^2.$$ This concludes the proof of $(1)$.
		The rest of the proof follows easily.
				\item Suppose that $2\parallel \operatorname{o}_F(\zeta_n)$.
		%Since $2\parallel\operatorname{o}_F(\zeta_n)$, 
		Then $\operatorname{o}_F(\zeta_{2^{\text{\textepsilon}_n (2)}})=2$, $\operatorname{o}_F(\zeta_n)= 2 \operatorname{o}_F (\zeta_{\operatorname{q}_n(2)})$ and $\operatorname{d}_F(n)=2^{{\text{\textepsilon}_n (2)}-1} \operatorname{d}_F(\operatorname{q}_n(2))$. Thus, we have $F(\zeta_{2^{\text{\textepsilon}_n (2)}})=F(\zeta_n)$. Therefore, since $\operatorname{o}_F(\zeta_{2^{\text{\textepsilon}_n (2)}})=2$ and $[F(\zeta_{2^{\text{\textepsilon}_n (2)}}):F]=2$, by Lemma \ref{valuesofkfor2^e}, we have $\sigma(\zeta_{2^{\text{\textepsilon}_n (2)}})=-\zeta_{2^{{\text{\textepsilon}_n (2)}}}$. By $(1)$ above we have $\sigma(\zeta_{\operatorname{o}_F(\zeta_{\operatorname{q}_n(2)})}) = \zeta_{\operatorname{o}_F(\zeta_{\operatorname{q}_n(2)})}^{-1}$ when $\operatorname{o}_F(\zeta_{\operatorname{q}_n(2)})\neq 1$ and trivially when $\operatorname{o}_F(\zeta_{\operatorname{q}_n(2)})= 1$. Since $\zeta_n= \zeta_{2^{\text{\textepsilon}_n (2)}}\zeta_{\operatorname{d}_F(\operatorname{q}_n(2))}\zeta_{\operatorname{o}_F({\zeta_{\operatorname{q}_n(2)}})}$, it follows that 
		$$\begin{array}{clll}
			\sigma(\zeta_n) &=\sigma(\zeta_{2^{\text{\textepsilon}_n (2)}})\sigma(\zeta_{\operatorname{d}_F(\operatorname{q}_n(2))})\sigma(\zeta_{\operatorname{o}_F({\zeta_{\operatorname{q}_n(2)}})}) \\
			&=-\zeta_{2^{\text{\textepsilon}_n (2)}}\zeta_{\operatorname{d}_F(\operatorname{q}_n(2))}\zeta_{ \operatorname{o}_F(\zeta_{\operatorname{q}_n(2)})}^{-1} =\zeta_{2\operatorname{d}_F(n)}\zeta_{\operatorname{o}_F(\zeta_n)}^{-1}
		\end{array}$$
		Since $\zeta_{\operatorname{d}_F(n)} \in F$, we have $F(\zeta_n) =F(\zeta_{2^{{\text{\textepsilon}_n(2)}-1}\operatorname{o}_F (\zeta_n)})$. The rest of the proof is straightforward.
		\item Suppose that $\text{\textepsilon}_n (2)>2, \ 2^{\text{\textepsilon}_n (2)-1}\parallel \operatorname{o}_F(\zeta_n) \text{ and }\zeta_{2^{\text{\textepsilon}_n (2)}} + \zeta_{2^{\text{\textepsilon}_n (2)}}^{-1} \in F. $ Then, $F(\zeta_{2^{\text{\textepsilon}_n (2)}})=F(\zeta_n)$ and $\operatorname{o}_F(\zeta_n)= 2^{\text{\textepsilon}_n (2)-1}\operatorname{o}_F( \zeta_{\operatorname{q}_n(2)}).$ Therefore, $\operatorname{d}_F(n)= 2 \operatorname{d}_F(\operatorname{q}_n(2))$.  
		Since $\zeta_{2^{\text{\textepsilon}_n (2)}}+\zeta_{2^{\text{\textepsilon}_n (2)}}^{-1}\in F$ by the assumption, therefore by Lemma \ref{valuesofkfor2^e} and Lemma \ref{minimalpolynomialzetan}, $\sigma(\zeta_{2^{\text{\textepsilon}_n (2)}})=\zeta_{2^{\text{\textepsilon}_n (2)}}^{-1}$.  
		We have $\sigma(\zeta_{\operatorname{o}_F(\zeta_{\operatorname{q}_n(2)})}) = \zeta_{\operatorname{o}_F(\zeta_{\operatorname{q}_n(2)})}^{-1}$.\\ 
		Since $(2^{\text{\textepsilon}_n (2)}, \operatorname{d}_F(\operatorname{q}_n(2)))=(\operatorname{d}_F(\operatorname{q}_n(2)), \operatorname{o}_F({\operatorname{q}_n(2)}))=(2^{\text{\textepsilon}_n (2)}, \operatorname{o}_F({\operatorname{q}_n(2)}))=1,$ we have that $\zeta_n=\zeta_{2^{\text{\textepsilon}_n (2)}}\zeta_{\operatorname{d}_F(\operatorname{q}_n(2))}\zeta_{\operatorname{o}_F({\zeta_{\operatorname{q}_n(2)}})}.$  So that,
		$$\begin{array}{clll}
			\sigma(\zeta_n) 
			&=&\sigma(\zeta_{2^{\text{\textepsilon}_n (2)}}\zeta_{\operatorname{d}_F(\operatorname{q}_n(2))}\zeta_{\operatorname{o}_F({\zeta_{\operatorname{q}_n(2)}})})\\
			%&=&\sigma(\zeta_{2^{\text{\textepsilon}_n (2)}})\sigma(\zeta_{\operatorname{d}_F(\operatorname{q}_n(2))})\sigma(\zeta_{\operatorname{o}_F({\zeta_{\operatorname{q}_n(2)}})}) \\
			%&& \text{ since} \ \sigma \ \text{is a homomorphism}, \\
			&=&\zeta_{2^{\text{\textepsilon}_n (2)}}^{-1}\zeta_{\operatorname{d}_F(\operatorname{q}_n(2))}\zeta_{\operatorname{o}_F(\zeta_{\operatorname{q}_n(2)})}^{-1}		=-\zeta_{\operatorname{d}_F(n)}\zeta_{2\operatorname{o}_F(\zeta_n)}^{-1} 
		\end{array} $$
Since $\zeta_{\operatorname{d}_F(\operatorname{q}_n(2))} \in F$, we have $F(\zeta_n) =F(\zeta_{2\operatorname{o}_F (\zeta_n)})$. Since ${\text{\textepsilon}_n (2)}>2$, we have $4|n$, and $\zeta_n^{2^{{\text{\textepsilon}_n (2)}-2} m}=\zeta_4 \in F(\zeta_n) \setminus F$, since $2^{{\text{\textepsilon}_n (2)}-1} \parallel \operatorname{o}_F (\zeta_n)$ by assumption. Therefore, $F(\zeta_n) = F(\zeta_4)$. The rest of the proof follows easily.

		\item  Suppose that $2^{{\text{\textepsilon}_n (2)}}\parallel n, {\text{\textepsilon}_n (2)}>2,  \ 2^{{\text{\textepsilon}_n (2)}-1}\parallel \ \operatorname{o}_F(\zeta_n) \text{ and }
		\zeta_{2^{\text{\textepsilon}_n (2)}} - \zeta_{2^{\text{\textepsilon}_n (2)}}^{-1} \in F.$ By $(2)$, $\operatorname{o}_F(\zeta_{n})=2^{{\text{\textepsilon}_n (2)}-1}\operatorname{o}_F( \zeta_{\operatorname{q}_n(2)})$ and $\operatorname{d}_F(n) =2 \operatorname{d}_F(\operatorname{q}_n(2))$. Since $\zeta_{2^{\text{\textepsilon}_n (2)}} - \zeta_{2^{\text{\textepsilon}_n (2)}}^{-1} \in F$ by assumption, therefore we have $\sigma(\zeta_{2^{\text{\textepsilon}_n (2)}})=-\zeta_{2^{{\text{\textepsilon}_n (2)}}}^{-1}$ by Lemma \ref{valuesofkfor2^e}. We have $\sigma(\zeta_{\operatorname{o}_F(\zeta_{\operatorname{q}_n(2)})}) = \zeta_{\operatorname{o}_F(\zeta_{\operatorname{q}_n(2)})}^{-1}$.  It follows that 
		$$\begin{array}{lllll}
			\sigma(\zeta_n) &=\sigma(\zeta_{2^{\text{\textepsilon}_n (2)}}\zeta_{\operatorname{d}_F(\operatorname{q}_n(2))}\zeta_{\operatorname{o}_F(\zeta_{\operatorname{q}_n(2)})})\\
		%	&=\sigma(\zeta_{2^{\text{\textepsilon}_n (2)}})\sigma(\zeta_{\operatorname{d}_F(\operatorname{q}_n(2))})\sigma(\zeta_{\operatorname{o}_F(\zeta_{\operatorname{q}_n(2)})})  \\
			&=-\zeta_{2^{\text{\textepsilon}_n (2)}}^{-1}\zeta_{\operatorname{d}_F(\operatorname{q}_n(2))}\zeta_{\operatorname{o}_F(\zeta_{\operatorname{q}_n(2)})}^{-1} =\zeta_{\operatorname{d}_F(n)}\zeta_{2\operatorname{o}_F(\zeta_n)}^{-1}.	
		\end{array}$$  
		We prove that $F(\zeta_n ) = F( \zeta_{2\operatorname{o}_F(\zeta_n)})= F(\zeta_4)$, similarly as in $(3)$ and the rest of the proof is easily derived.
			\end{enumerate}
\end{proof}

\begin{remark}
Keeping the notation of the previous theorem \ref{theorem-valueofk}:
\begin{enumerate} 
\item By Lemma \ref{minimalpolynomialzetan}, we also know that the minimal polynomial of $\zeta_n$ is of the form
$$x^2 - (\zeta_n + \zeta_n^{\text{\textlyoghlig}_n})x + \zeta_n^{\text{\textlyoghlig}_n+1}.$$
	From the computations of $\sigma( \zeta_n)$ in Theorem \ref{theorem-valueofk}, we can easily explicitly compute $\text{\textlyoghlig}_n$ modulo $n$ using the Chinese remainder theorem.
	\item From Theorem \ref{theorem-valueofk}, we can deduce the following \begin{itemize}
  \item $\operatorname{d}_F(n)=1$ or, $\operatorname{d}_F(n)=2$ and $\zeta_{2^{\text{\textepsilon}_n (2)}}+\zeta_{2^{\text{\textepsilon}_n (2)}}^{-1}\in F$ if and only if $\text{\textlyoghlig}_n \equiv -1\mod n$.
  \item  $\zeta_n$ is a radical generator for $F(\zeta_n)/F$ if and only if $2= \operatorname{o}_F(\zeta_n)$. In which case, $\text{\textlyoghlig}_n \equiv 1+\frac{n}{2} \mod n$ and $\operatorname{min}( \zeta_n , F) = x^2 - \zeta_{\operatorname{d}_F(n)}$.
\end{itemize}
 \item In a characteristic different from $2$, the irreducibility of the polynomial $x^2-(a\pm a^{-1})x \pm1$ over the field $F$ is equivalent to the condition $a\mp a^{-1} \notin F$. This can be proven easily by computing the discriminant of this quadratic polynomial. In particular, when we know that a $p$ power primitive root of unity has a degree less than or equal to $2$, this permits us to check whether it is of degree $2$ or not.
	\end{enumerate}
\end{remark}

\section{About the structure of the sets of quadratic cyclotomic extensions} 
\subsection{Maximal cyclotomic extensions}
We start this section by redefining one of the key constants mentioned in the introduction.
\begin{definition}\label{max-l}
 We define $\ell_{p^\infty_F}$ as the integer such that
 $$\ell_{p^\infty_F}=\begin{cases}
				\operatorname{max}\{k \in \mathbb{N} \cup \{ 0 \} |\zeta_{p^k}\in F\} & \text{when it exists}; \\
				\infty & \text{otherwise}.
			\end{cases} $$
\end{definition}
We also define the set of the roots of unity.
\begin{definition}\label{mu_infinity}
	\begin{enumerate}
		\item We define $\mu_{\infty}:=\bigcup\limits^{\infty}_{n=1}\mu_n$ to be the set of the roots of unity in $\overline{F}$. 
		\item We define $\mu_{p^\infty}:=\bigcup\limits^{\infty}_{k=1}\mu_{p^k}$ to be the set of the $p$ power roots of unity in $\overline{F}$. 
		\item We define $\mu_{\infty_F}$ (resp. $\mu_{p^{\infty}_F}$) %\eliza{can we change $\mu_{\infty_F}$ using notation $\operatorname{d}_F(n)?$ } 
		to be the set of roots of unity (resp.  $p$ power roots of unity where $k \in \mathbb{N}$ and $p$ is prime number) in $F$.
		\item We define $\mu_{2 \infty+1}:=\bigcup\limits^{\infty}_{n=1}\mu_{2n+1}$. 
	\end{enumerate}
\end{definition}
\begin{remark}\label{muinfinity-element}
 We make the following observations.
	\begin{enumerate}
		%\item We note that $\mu_{\infty}$ is a multiplicative group. Moreover, $\mu_{p^\infty}$, $\mu_{2 \infty+1} $ are subgroups of $\mu_{\infty}$.    
		%\item Any element in $\mu_{\infty}$ is a primitive $n^{th}$ root of unity, for some $n\in \mathbb{N}$. 
	%	\item $\mu_{\wp^{\infty}}= \{1 \}$. In particular, $\mu_{\wp^{k}}= \{1 \}$ for all $k \in \mathbb{N}$. 
		\item $\mu_{\infty}=\bigvee \limits_{p \in \mathbb{P}}\mu_{p^{\infty}}:=\{ \prod_{p\in S} \zeta_{p^{e_p}} | S \finsub \mathbb{P} \text{ and } e_p \in \mathbb{N},\ \forall p \in S \}$. %We also have that $\bigvee \limits_{p \in \mathcal{P}}\mu_{p^{\infty}}$ is naturally isomorphic as a group to the infinite product $\bigoplus\limits_{p \in \mathcal{P}}\mu_{p^{\infty}}.$ So that, $ \mu_{\infty}\simeq \bigoplus\limits_{p \in \mathcal{P}}\mu_{p^{\infty}}$.
			\item $\mu_{p^{\infty}_F}=\mu_{p^{\ell_{p^\infty_F}}}.$ 
		\item $\mu_{\infty_F}=\bigcup\limits_{n\in \mathbb{N}}\mu_{\operatorname{d}_F(n)}= \mu_{\infty}\cap F =\bigvee \limits_{p \in \mathbb{P}}\mu_{p^{\ell_{p_F^\infty}}} :=\{ \prod_{p\in S} \zeta_{p^{e_p}} | S \finsub \mathbb{P} \text{ and } e_p \leq {\ell_{p_F^\infty}},\ \forall p \in S \}$. 
		\item The field obtained by adjoining to $F$ all roots of unity (resp. all the $p$ power primitive roots of unity) in $\overline{F}$ denoted by $F(\mu_\infty)$ (resp. $F(\mu_{p^\infty})$) can be defined as the intersection of all the subfields containing $F$ and $\mu_\infty$ (resp. $F$ and $\mu_{p^\infty}$). 
	\end{enumerate}

\end{remark}

The following zero map will permit us to describe the set of degree $2$ cyclotomic elements in terms of equaliser. 
\begin{definition}\label{zero-map}
Let $n \in \mathbb{N}$. 
	We define 
	$$ \begin{array}{clllll} 0_{\mu_{\infty}}:& \mu_{\infty} & \rightarrow & \frac{F(\mu_{\infty})}{F}   \\ 
		& \zeta_{n} & \mapsto & [0]_F
	\end{array} $$ 
\end{definition}

\subsection{The function $\operatorname{t}_F$} 

 To initiate our discussion, we introduce a map that facilitates the understanding of the set of quadratic cyclotomic extensions. 
%The following concept will permit us to construct a map whose equaliser with the zero map is precisely $\mathscr{M}_{2, \overline{F}}$.
\begin{definition}\label{tnf}
 We define the map $\operatorname{t}_F$ from $\mathbb{N}$ to $\mathbb{N}$ such that given a prime number $p$ and $e\in \mathbb{N}$, $\operatorname{t}_F(p^e)$ is equal to
$$\left\{ \begin{array}{clll}p^e & \text{when }\  p \ \text{ is odd and } \operatorname{o}_F(\zeta_{p^e})\neq1; \\
2^e & \text{when } \ p=2 \ \text{and } \ \operatorname{o}_F(\zeta_{2^e})>2;\\
2 & \text{when} \ p=2 \text{ and } \ \operatorname{o}_F(\zeta_{2^e})=2;\\
1 & \text{when} \ \operatorname{o}_F(\zeta_{p^e})=1.
\end{array} \right.$$ 
Moreover, $\operatorname{t}_F(n)=\prod \limits_{p \in \mathbb{P},p|n} \operatorname{t}_F(p^{\text{\textepsilon}_n(p)})$, for any $n \in \mathbb{N}$. 

\end{definition}
The following remark will be particularly useful in the proofs of Lemma \ref{property-q}, Lemma \ref{prime-equaliser}, and Lemma \ref{vp-propertyQ}.

\begin{remark}\label{tnf-remark} 
When $[F(\zeta_n) : F]=2$, we obtain the following:	
\begin{enumerate}
\item For all $n \in \mathbb{N}$, $$\operatorname{t}_F(n) =\left\{ \begin{array}{clll} 2\operatorname{o}_F(\zeta_n)& \ \text{when} \ \operatorname{o}_F(\zeta_{2^{\text{\textepsilon}_n(2)}})>2; \\
		\operatorname{o}_F(\zeta_n) & \text{otherwise}. 
	\end{array} \right.$$  This follows from Lemma \ref{remark1} and Lemma \ref{valuesofkfor2^e}.
\item Consider the definition above. If $f<e$, we have $\operatorname{t}_F(p^f)|\operatorname{t}_F(p^e)$. In particular,
	\begin{itemize}
		\item $\operatorname{t}_F(p^e)=p^{e-f}\operatorname{t}_F(p^f)$ when $p^e|n$, $p$ is odd and $\operatorname{o}_F(\zeta_{p^f})\neq1$, or $p=2$ and $\operatorname{o}_F(\zeta_{2^f})>2$,
		\item $\operatorname{t}_F(2^e)=2^{e-f}\operatorname{t}_F(2^f)$ when $2^e|n$, $\operatorname{o}_F(\zeta_{2^e})>2$ and $\operatorname{o}_F(\zeta_{2^f})>2$, 
		\item $\operatorname{t}_F(2^e)=2^{e-1}\operatorname{t}_F(2^f)$ when $2^e|n$, $\operatorname{o}_F(\zeta_{2^e})>2$ and $\operatorname{o}_F(\zeta_{2^f})=2$, 
		\item $\operatorname{t}_F(2^e)=2^e\operatorname{t}_F(2^f)$ when $2^e|n$, $\operatorname{o}_F(\zeta_{2^e})>2$ and $\operatorname{o}_F(\zeta_{2^f})=1$,
		\item $\operatorname{t}_F(2^e)=2\operatorname{t}_F(2^f)$ when $2^e|n$, $\operatorname{o}_F(\zeta_{2^e})=2$,
		\item $\operatorname{t}_F(p^e)=\operatorname{t}_F(p^f)=1$ when $p^e|n$, $\operatorname{o}_F(\zeta_{p^e})=1$. 
	\end{itemize}
	%Indeed, in the first case we have $\operatorname{t}_F(p^e) =p^e$ and $\operatorname{t}_F(p^f)=p^f$ so that $p^e=p^{e-f}p^f$. In the second case we have  $\operatorname{t}_F(2^e)=2^e$ and $\operatorname{t}_F(2^f)=2$. In the third case, we have $\operatorname{t}_F(2^e)=2^e$ and $\operatorname{t}_F(2^f)=1$. In the fourth case we have $\operatorname{t}_F(2^e)=2$ and $\operatorname{t}_F(2^f)=1$ as $\operatorname{o}_F(\zeta_{2^f})=1$ and in the last case is trivial according to definition of $\operatorname{t}_F(p^e)$. 
\end{enumerate}
\end{remark}

\subsection{Order 2 primitive roots of unity} 
We start the section with the following definition: 
\begin{definition}
\begin{enumerate}
\item We define $\mathscr{G}_{2,\overline{F}}$ (resp. $\mathscr{G}_{2, \overline{F}}^p$) to be the set of primitive roots of unity (resp. $p$ power primitive roots of unity) of order $2$ in $F$. 
\item We define $\mathscr{R}_{2,\overline{F}}$ (resp. $\mathscr{R}_{2, \overline{F}}^p$) the set of cyclotomic extension $F(\zeta_n)$ such that $\zeta_n \in \mathscr{G}_{2,\overline{F}}$ (resp. $\zeta_n \in \mathscr{R}_{2,\overline{F}}$).
\end{enumerate}
 \end{definition}

The following lemma characterizes when a general primitive root of unity is of order $2$ over $F$. 
\begin{lemma}\label{zetanradical}
		Let $n\in \mathbb{N}$. The following assertions are equivalent:
		\begin{enumerate}
			\item $\zeta_n\in \mathscr{G}_{2,\overline{F}}$.
			\item $\operatorname{o}_F(\zeta_{n})=\operatorname{o}_F(\zeta_{2^{\text{\textepsilon}_n(2)}})=2$.
			\item $\zeta_n\notin F$, $\zeta_{\operatorname{t}_F(2^{\text{\textepsilon}_n(2)})}-\zeta_{\operatorname{t}_F(2^{\text{\textepsilon}_n(2)})}^{-1}=0$ and $\text{\texthtq}_n(2) |\operatorname{d}_F(n)$.
		\end{enumerate} 
	\end{lemma}
	\begin{proof}
		$(1)\Longrightarrow (2)$ Suppose that $\zeta_n\in \mathscr{G}_{2,\overline{F}}$. That is $\operatorname{o}_F(\zeta_n)=2$. Moreover, by \cite[Theorem 4.5]{conrad2014orders} we have $2=
		\operatorname{o}_F(\zeta_n)=\operatorname{o}_F(\zeta_{2^{\text{\textepsilon}_n(2)}}\zeta_{\operatorname{q}_n(2)})=\operatorname{o}_F(\zeta_{2^{\text{\textepsilon}_n(2)}})\operatorname{o}_F(\zeta_{\operatorname{q}_n(2)})$. As a consequence, $\operatorname{o}_F(\zeta_{\operatorname{q}_n(2)})=1$ and $\operatorname{o}_F(\zeta_{2^{\text{\textepsilon}_n(2)}})=2$. 
		
		$(2) \Longrightarrow (3)$ Suppose that $\operatorname{o}_F(\zeta_{n})=\operatorname{o}_F(\zeta_{2^{\text{\textepsilon}_n(2)}})=2$. 
		
		Since $2=\operatorname{o}_F(\zeta_n)=\operatorname{o}_F(\zeta_{2^{\text{\textepsilon}_n(2)}})\operatorname{o}_F(\zeta_{\operatorname{q}_n(2)})$, we have $\zeta_{\operatorname{q}_n(2)}\in F$ so that ${\operatorname{q}_n(2)}|\operatorname{d}_F(n)$. Moreover, $\operatorname{o}_F(\zeta_{2^{\text{\textepsilon}_n(2)}})=2$ implies $\operatorname{t}_F(2^{\text{\textepsilon}_n(2)})=2$ by Definition \ref{tnf}. Hence, $\zeta_{\operatorname{t}_F(2^{\text{\textepsilon}_n(2)})}-\zeta_{\operatorname{t}_F(2^{\text{\textepsilon}_n(2)})}^{-1}=\zeta_2-\zeta_2^{-1}=0$.
		
		$(3)\Longrightarrow (1)$. Suppose that $\zeta_n\notin F$, $\zeta_{\operatorname{t}_F(2^{\text{\textepsilon}_n(2)})}-\zeta_{\operatorname{t}_F(2^{\text{\textepsilon}_n(2)})}^{-1}=0$ and $\text{\texthtq}_n(2) |\operatorname{d}_F(n)$. Since $\text{\texthtq}_n(2) |\operatorname{d}_F(n)$, then $\zeta_{\operatorname{q}_n(2)}\in F$. Also, $\zeta_{\operatorname{t}_F(2^{\text{\textepsilon}_n(2)})}-\zeta_{\operatorname{t}_F(2^{\text{\textepsilon}_n(2)})}^{-1}=0$ implies that $\zeta_{\operatorname{t}_F(2^{\text{\textepsilon}_n(2)})}^2=1$. Hence, we have either $\operatorname{t}_F(2^{\text{\textepsilon}_n(2)})=1$ or $\operatorname{t}_F(2^{\text{\textepsilon}_n(2)})=2$. But $\operatorname{t}_F(2^{\text{\textepsilon}_n(2)})=1$ is impossible since it implies $\zeta_{2^{\text{\textepsilon}_n(2)}}\in F$ by Definition \ref{tnf} and thus, $\zeta_n\in F$. Thus, $\operatorname{t}_F(2^{\text{\textepsilon}_n(2)})=2$ and $\operatorname{o}_F(\zeta_{2^{\text{\textepsilon}_n(2)}})=2$. Then $\operatorname{o}_F(\zeta_n)=2$. Therefore,  $\zeta_n\in \mathscr{G}_{2,\overline{F}}$ concluding the proof. 
\end{proof}

\begin{remark} 
From the previous lemma, we can further prove that given a prime number $p$ such that $\zeta_{p^e}\notin F$, $\zeta_{p^e}\in \mathscr{G}_{2,\overline{F}}$ if and only if $p=2$ and $\zeta_{\operatorname{t}_F(p^e)}-\zeta_{\operatorname{t}_F(p^e)}^{-1}=0$. 
\end{remark}

We next prove that the collection of cyclotomic elements that define radical extensions of degree $2$ forms a group provided that the set is non-empty. %This intriguing revelation is made possible through the combined insights of Corollary \ref{zetanradical}. 
\begin{theorem}\label{g2f} The set of $p$-primitive roots of unity of order $2$ in $F$ is given by
	$$\mathscr{G}_{2,\overline{F}}=\left\{ \begin{array}{clll}\emptyset & \text{ when} \ \ell_{2^\infty_F}=\infty; \\
		\mathcal{P}_{2^{\ell_{2^\infty_F}+1}}\bigvee{\mu_{{2\infty+1}_F}} & \text{ otherwise.} \end{array} \right.$$ In particular, $F\left(\zeta_{2^{\ell_{2^\infty_F}+1}}\right)=F\left(\zeta_{2^{\ell_{2^\infty_F}+1}m}\right)$ for all odd integer $m\in \mathbb{N}$ such that $\zeta_m\in F$. When $\ell_{2^\infty_F}< \infty$, then $\mathscr{G}_{2,\overline{F}}$ is a group when endowed with the binary operation $\smallstar$ given by $({\zeta^{k_1}_{2^{\ell_{2^\infty_F}+1}}} \zeta_{m_1} ) \smallstar ({\zeta^{k_2}_{2^{\ell_{2^\infty_F}+1}}}\zeta_{m_2})={\zeta^{k_1k_2}_{2^{\ell_{2^\infty_F}+1}}}\zeta_{m_1} \zeta_{m_2}$. In particular, the set of primitive  roots of unity of order $2$ in $F$ is given by
	$$\mathscr{G}_{2,\overline{F}}^p=\left\{ \begin{array}{clll}\emptyset & \text{ when} \ p \ \text{is odd, or } p=2 \text{ and }  \ell_{2^\infty_F}=\infty\text{}; \\
		\mathcal{P}_{2^{\ell_{2^\infty_F}+1}}& \text{ otherwise.} \end{array} \right.$$
\end{theorem}
\begin{proof}
	By contradiction, we can easily prove that $\mathscr{G}_{2,\overline{F}}=\emptyset$ when $\ell_{2^\infty_F}=\infty$. When $\ell_{2^\infty_F}<\infty$, the result follows from Corollary \ref{zetanradical}. 
\end{proof}
From the above Theorem, we can obtain easily the following Corollary describing the set of cyclotomic extensions generated by a primitive root of unity of order $2$.
\begin{corollary}\label{allradical}
The sets of quadratic cyclotomic extensions $\mathscr{R}_{2,\overline{F}}$ that admit a root of unity in $\overline{F}$ as a radical generator over $F$ can be described as follows:
\[
\mathscr{R}_{2,\overline{F}}=
\begin{cases}
\emptyset & \text{when}\ \ell_{2^\infty_F}=\infty; \\
\left\{F\left(\zeta_{2^{\ell_{2^\infty_F}+1}}\right)\right\} & \text{otherwise}.
\end{cases}
\]
\end{corollary}

\begin{remark} 
$$\mathscr{R}_{2,\overline{F}} = \mathscr{R}_{2,\overline{F}}^2$$
\end{remark} 
\subsection{The property $\mathcal{C}_2$}
We will see later that the structure of the sets of degree $2$ cyclotomic elements will be affected by the property $\mathcal{C}_2$ defined in the next Definition-Lemma. These sets will take $3$ forms depending on the fixed base field and one of these forms relies on this property.  
\begin{deflem} \label{property-q}
	We say an integer $e\in \mathbb{N}$ has {\sf property $\mathcal{C}_2$} if $\zeta_{2^e}\notin F$, $\operatorname{t}_F(2^e)\neq 2$ and $\zeta_{2^e}-\zeta_{2^e}^{-1}\in F$.  
We say that $F$ has {\sf property $\mathcal{C}_2$} if there is $e\in \mathbb{N}$ which has property $\mathcal{C}_2$. %such that $\zeta_{2^e}\notin F$ and $\zeta_{2^e}-\zeta_{2^e}^{-1}\in F$. 
	When an integer $e$ with property $\mathcal{C}_2$ exists, it is unique and we denote it $c_2$. Moreover, when there is a natural number $e$ which has property $\mathcal{C}_2$ then, 
	\begin{enumerate}
		\item $\operatorname{o}_F(\zeta_{2^e})=2^{e-1}$ and $\operatorname{min}(\zeta_{2^e}, F)=x^2-(\zeta_{2^e}-\zeta_{2^e}^{-1})x-1$;
		\item for all $f<e$, we have $\zeta_{\operatorname{t}_F(2^f)}+ \zeta_{\operatorname{t}_F(2^f)}^{-1}\in F$. 
	\end{enumerate}
\end{deflem}
\begin{proof}
	Suppose that there exists $e$ which has property $\mathcal{C}_2$ 
	\begin{enumerate}
		\item  This follows easily from Lemma \ref{valuesofkfor2^e}. %Since by the definition $\zeta_{2^e}\notin F$ and $\zeta_{2^e}-\zeta_{2^e}^{-1}\in F$, then $\zeta_{2^e}$ is a root of an irreducible polynomial $x^2-(\zeta_{2^e}-\zeta_{2^e}^{-1})x-1$ over $F$. Hence,  $[F(\zeta_{2^e}):F]=2$. Moreover $\operatorname{o}_F(\zeta_{2^e})\neq 2$ since $\operatorname{t}_F(2^e)\neq 2$ by the assumption.  As a consequence, $\operatorname{o}_F(\zeta_{2^e})=2^{e-1}$ by Lemma \ref{order-zeta2e}.  Hence, we have proven the result. 
		\item We want to prove that for all $f<e$,  $\zeta_{\operatorname{t}_F(2^f)}+ \zeta_{\operatorname{t}_F(2^f)}^{-1}\in F$. By definition, $\operatorname{o}_F(\zeta_{2^e})>2$. Hence, by Definition \ref{tnf}, $\operatorname{t}_F(2^e)=2^e$ and $\zeta_{\operatorname{t}_F(2^e)}-\zeta_{\operatorname{t}_F(2^e)}^{-1}\in F$. It follows that  $\zeta_{\operatorname{t}_F(2^e)}^2+\zeta_{\operatorname{t}_F(2^e)}^{-2}\in F$, since $ \zeta_{\operatorname{t}_F(2^e)}^2+\zeta_{\operatorname{t}_F(2^e)}^{-2}= (\zeta_{\operatorname{t}_F(2^e)}-\zeta_{\operatorname{t}_F(2^e)}^{-1})^2 +2$ and $(\zeta_{\operatorname{t}_F(2^e)}-\zeta_{\operatorname{t}_F(2^e)}^{-1})^2\in F$. Now let $f<e$. Then applying Remark \ref{tnf-remark}, we obtain that $\operatorname{t}_F(2^f) |\operatorname{t}_F(2^e)$. Therefore, by Lemma \ref{waring} we have obtained in all cases  that $\zeta_{\operatorname{t}_F(2^f)}+\zeta_{\operatorname{t}_F(2^f)}^{-1}\in F$. Now, $f$ cannot have property $\mathcal{C}_2$, since otherwise $\zeta_{\operatorname{t}_F(2^f)}+\zeta_{\operatorname{t}_F(2^f)}^{-1}+\zeta_{\operatorname{t}_F(2^f)}-\zeta_{\operatorname{t}_F(2^f)}^{-1}=2\zeta_{\operatorname{t}_F(2^f)}\in F$ and this would contradict $\operatorname{o}_F(\zeta_{2^e})=2^{e-1}$ when $\operatorname{t}_F(2^f)\neq 2$. 
	\end{enumerate}
	We lastly prove that there is only one element with property $\mathcal{C}_2$ when it exists. Let $e$ be such an element. We have already proven that any natural number smaller than $e$ cannot have property $\mathcal{C}_2$. We use contradiction to prove that the same holds for any natural number greater than $f$. We assume that there exists a natural number $k$ such that $k>e$ with property $\mathcal{C}_2$. Then, by Lemma \ref{waring} and Remark \ref{tnf-remark}, we have $\zeta_{\operatorname{t}_F(2^e)}+\zeta_{\operatorname{t}_F(2^e)}^{-1}\in F$ since $e<k$. That implies that $\zeta_{\operatorname{t}_F(2^e)}\in F$ since $\zeta_{\operatorname{t}_F(2^e)}+\zeta_{\operatorname{t}_F(2^e)}^{-1}+\zeta_{\operatorname{t}_F(2^e)}-\zeta_{\operatorname{t}_F(2^e)}^{-1}=2\zeta_{\operatorname{t}_F(2^e)}\in F$. So that $\zeta_{2^e}\in F$, since, by assumption $\operatorname{t}_F(2^e)\neq 2$, then $\operatorname{o}_F(\zeta_{2^e})\neq 2$ and thus $\operatorname{t}_F(2^e)=2^e$ by Definition \ref{tnf} and Lemma \ref{order-zeta2e}, leading to a contradiction. Therefore, $e$ is unique. Hence, the proof is completed. 
\end{proof}
\subsection{Describing the set of quadratic cyclotomic as an equilizer} 
The forthcoming sections of the paper will primarily focus on characterizing sets of quadratic cyclotomic extensions, along with the set of roots of unity that define them. We will adopt the following notation for these sets.
\begin{definition}\label{quadratic-cyclotomic} 
		\begin{enumerate}
			\item We denote $\mathscr{C}_{2,\overline{F}}$ (resp. $\mathscr{C}_{2,\overline{F}}^p$) as the sets of quadratic cyclotomic extensions ($p$ power cyclotomic extensions) over $F$ in $\overline{F}$.
			\item We denote $\mathscr{M}_{2,\overline{F}}$ (resp. $\mathscr{M}_{2,\overline{F}}^p$) as the sets of roots of unity $\zeta$ in $\overline{F}$ such that $F(\zeta)\in \mathscr{C}_{2, \overline{F}}$ (resp. $F(\zeta)\in \mathscr{C}_{2, \overline{F}}^p$).
		\end{enumerate}
	
\end{definition}

\begin{remark}\label{iso-cyclotomic}
	We make the following observations about the sets defined above:
	
	\begin{enumerate}
		\item There exist natural mappings from $\mathscr{M}_{2,\overline{F}}$ (resp. $\mathscr{M}^p_{2,\overline{F}}$)  to $\mathscr{C}_{2,\overline{F}}$ (resp. $\mathscr{C}^p_{2,\overline{F}}$), which simply send a root of unity $\zeta$ to $F(\zeta)$. We note that these mappings are surjective but not one-to-one.
		
		\item Throughout this paper, we work within a fixed algebraic closure of $F$. As a consequence, given any $\zeta_n$ and $\zeta_m$ in $\overline{F}$ where $n, m\in \mathbb{N}$, the equality $F(\zeta_n) = F(\zeta_m)$ is equivalent to asserting that $F(\zeta_n)$ and $F(\zeta_m)$ are $F$-isomorphic. As a consequence, $\mathscr{C}_{2,\overline{F}}$ also corresponds to the set of cyclotomic extensions up to isomorphism.
	\end{enumerate}
\end{remark}
In this section, we describe the sets of quadratic cyclotomic extensions by means of an equalizer of the zero map and another single map. This map together with the constant $\ell_{p^\infty_F}$ characterizes the set of primitive roots of unity defining quadratic extensions. It functions as a synthesis of all the findings of the previous sections into a single mapping. The map in question is defined in the next definition and is constructed using Theorem \ref{theorem-valueofk}.

\begin{definition}\label{equaliser-general}
We define the map $$ \begin{array}{clll} \kappa_F:& \mu_\infty & \rightarrow & \frac{F(\mu_\infty)}{{}_{F}}   \\ 
		& \zeta_n & \mapsto & \begin{cases}[\zeta_{\operatorname{t}_F(n)} + \zeta_{\operatorname{t}_F(n)}^{-1}]_F \ \ \text{if}\ \operatorname{o}_F(\zeta_{2^{\text{\textepsilon}_n(2)}})\neq 2 \ \text{ and } \ \text{\textepsilon}_n(2)\neq c_2;\\
			 [\zeta_{\operatorname{t}_F(n)} - \zeta_{\operatorname{t}_F(n)}^{-1}]_F\ \ \text{if}\ \operatorname{o}_F(\zeta_{2^{\text{\textepsilon}_n(2)}})\neq 2 \ \text{ and } \ \text{\textepsilon}_n(2)= c_2;\\
			 [\zeta_{2^{ \text{\textepsilon}_n(2)}}(\zeta_{\operatorname{t}_F(n)} - \zeta_{\operatorname{t}_F(n)}^{-1})]_F \ \ \text{ otherwise. } \end{cases}
	\end{array} $$
	where $\frac{F(\mu_\infty)}{F}$ is the quotient sets when both $F(\mu_\infty)$ and $F$ are seen as additive groups and $[a]_F$ is the coset of $a$ in that quotient.
We denote $\kappa_F^p$ to be $\kappa_F|_{\mu_{p^\infty}}$. 
\end{definition}
\begin{remark}
When $p$ is an odd prime, we have $ \kappa_F^p$ sends $\zeta_{p^e}$ to $[\zeta_{p^e}+ \zeta_{p^e}^{-1}]_F$. 
\end{remark} 

We are now ready to prove Theorem \ref{Equali}, as stated in the introduction, which we restate here for the convenience of the reader.
% characterizes the set of roots of unity that define a quadratic extension over a fixed field, representing it as an equalizer. 
%We present every conceivable root of unity within $\mu_\infty$ that defines a quadratic extension, presented in relation to the concept of an equalizer.
\begin{introthm1}{Theorem}  \label{Equali}
The set $\mathscr{M}_{2, \overline{F}}$ of primitive roots of unity $\zeta$ in $\overline{F}$ such that $[F(\zeta):F]=2$ can be expressed as the following equalizer:
\[
\mathscr{M}_{2, \overline{F}} = \operatorname{Eq}( \kappa_{F}, 0_{\mu_\infty}) \setminus \mu_{\infty_F}.
\]
\end{introthm1}

\begin{proof}
	We start by proving that $\mathscr{M}_{2, \overline{F}}\subseteq \operatorname{Eq}( \kappa_{F}, 0_{\mu_\infty})\setminus\mu_{\infty_F}$. Let $\zeta_n\in\mathscr{M}_{2,\overline{F}}$. Then $[F(\zeta_n):F]=2$ by Definition \ref{quadratic-cyclotomic}. That implies that $\zeta_n\notin F$ so that $\zeta_n\notin \mu_{\infty_F}$. We obtain that $\zeta_n \in \operatorname{Eq}( \kappa_{F}, 0_{\mu_\infty})$, by Theorem \ref{theorem-valueofk}, the definition of $\operatorname{t}_F$ and since $\zeta_{\operatorname{d}_{F} (n)} \in F$.
	
	Conversely, let $\zeta_n\in \operatorname{Eq}( \kappa_{F}, 0_{\mu_\infty})\setminus\mu_{\infty_F}$.We have $[F(\zeta_n):F]\geq 2$ since $\zeta_n\notin F$. We want to prove that $[F(\zeta_n):F]=2$.  %Now let $n=2^{\text{\textepsilon}_n(2)}m$ where $(m, 2)=1$. Now let $n=2^{\text{\textepsilon}_n(2)}m$ where $(m, 2)=1$.
	Then by Definition \ref{tnf}, $\operatorname{t}_F(\operatorname{q}_n(2))|{\operatorname{q}_n(2)}$ and $\operatorname{t}_F(\operatorname{q}_n(2))r={\operatorname{q}_n(2)}$, for some $(r, \operatorname{t}_F(\operatorname{q}_n(2)))=1$. We have $\zeta_r\in F$. Also, we have $\zeta_n=\zeta_{2^{\text{\textepsilon}_n(2)}}\zeta_{\operatorname{t}_F(\operatorname{q}_n(2))}\zeta_{r}$. It then follows that $F(\zeta_n)=F(\zeta_{2^{\text{\textepsilon}_n(2)}\operatorname{t}_F(\operatorname{q}_n(2))})$ since $\zeta_r\in F$. We set $s:=2^{\text{\textepsilon}_n(2)}\operatorname{t}_F(\operatorname{q}_n(2))$. We note that $\zeta_s\notin F$ otherwise it contradicts the fact that $F(\zeta_s)=F(\zeta_n)$ and $\zeta_n\notin F$. It suffice to prove that $[F(\zeta_s):F]=2$ in order to prove that $[F(\zeta_n):F]=2$. %$\operatorname{min}(\zeta_s, F)$ is of degree $2$ indeed:
	This follows from the following observations:
\begin{itemize} 
\item When $n$ is odd or $\operatorname{o}_F(\zeta_{2^{\text{\textepsilon}_n(2)}})>2$ and $\text{\textepsilon}_n(2)\neq c_2$, we have $s=\operatorname{t}_F(n)$  and $\zeta_{\operatorname{t}_F(n)}+\zeta_{\operatorname{t}_F(n)}^{-1}\in F$ by Definition \ref{equaliser-general}. 
\item When $\operatorname{o}_F(\zeta_{2^{\text{\textepsilon}_n(2)}})>2$ and $\text{\textepsilon}_n(2)=c_2$, we have $\operatorname{t}_F(n)=s$ and $\zeta_{\operatorname{t}_F(n)}-\zeta_{\operatorname{t}_F(n)}^{-1}\in F$ by Definition \ref{equaliser-general}. %In both cases,  since $\zeta_s\notin F$ otherwise it contradicts the fact that $F(\zeta_s)=F(\zeta_n)$ and $\zeta_n\notin F$. Therefore, $[F(\zeta_s):F]=2$. 
\item When n is even and $\zeta_{2^{\text{\textepsilon}_n(2)}}\in F$, we have $\operatorname{t}_F(n)=\operatorname{t}_F(\operatorname{q}_n(2))$, $F(\zeta_s)=F(\zeta_{\operatorname{t}_F(\operatorname{q}_n(2))})$ and $\zeta_{\operatorname{t}_F(n)}+\zeta_{\operatorname{t}_F(n)}^{-1}\in F$.%then $[F(\zeta_{\operatorname{t}_F(\operatorname{q}_n(2))}):F]=2$ since otherwise will lead into a contradiction as above. 
\item When $\operatorname{o}_F(\zeta_{2^{\text{\textepsilon}_n(2)}})=2$ we have $\operatorname{t}_F(n)=2\operatorname{t}_F(\operatorname{q}_n(2))$, $F(\zeta_s)=F(\zeta_{2^{\text{\textepsilon}_n(2)}}\zeta_{\operatorname{t}_F(\operatorname{q}_n(2))})=F(\zeta_{2^{\text{\textepsilon}_n(2)}}\zeta_{\operatorname{t}_F(n)})$,  %Also we have,  $\zeta_{2^{\text{\textepsilon}_n(2)}}(\zeta_{\operatorname{t}_F(n)}-\zeta_{\operatorname{t}_F(n)}^{-1})\in F$ by Definition \ref{equaliser-general}. That implies that 
and $\zeta_{2^{\text{\textepsilon}_n(2)}}\zeta_{\operatorname{t}_F(n)}$ is a root of the polynomial $x^2-\zeta_{2^{\text{\textepsilon}_n(2)}}(\zeta_{\operatorname{t}_F(n)}-\zeta_{\operatorname{t}_F(n)}^{-1})x+\zeta_{2^{\text{\textepsilon}_n(2)}}^2$ over $F$, by Definition \ref{equaliser-general}. %This polynomial is irreducible since $\zeta_{2^{\text{\textepsilon}_n(2)}}\zeta_{\operatorname{t}_F(n)}\in F$ would imply $\zeta_s\in F$ contradicting  $F(\zeta_s)=F(\zeta_n)$ and $\zeta_n\notin F$. As a consequence, we get $[F(\zeta_s):F]=2$ since $F(\zeta_s)=F(\zeta_{2^{\text{\textepsilon}_n(2)}}\zeta_{\operatorname{t}_F(\operatorname{q}_n(2))})=F(\zeta_{2^{\text{\textepsilon}_n(2)}}\zeta_{\operatorname{t}_F(n)})$. 
\end{itemize} 
Therefore, $\operatorname{Eq}( \kappa_{F}, 0_{\mu_\infty})\setminus\mu_{\infty_F}\subseteq \mathscr{M}_{2, \overline{F}}$ and $\operatorname{Eq}( \kappa_{F}, 0_{\mu_\infty})\setminus\mu_{\infty_F}= \mathscr{M}_{2, \overline{F}}$ as wanted.  
\end{proof}

\begin{corollary} \label{pcase}
The set $\mathscr{M}_{2,\overline{F}}^p$ of roots of unity $\zeta$ in $\overline{F}$ such that $\zeta$ is a $p$ power primitive root of unity for some prime number $p$ and integer $e$ and $[F(\zeta):F]=2$ can be expressed as the following equilizer:
\[
\mathscr{M}_{2,\overline{F}}^p = \operatorname{Eq}( \kappa_{F}^p, 0_{\mu_\infty}) \setminus \mu_{p^\infty_F}.
\]
\end{corollary}

\subsection{The constants \texorpdfstring{$\nu_{p^\infty_F}$}{Lg} and their properties}
%We start this section by defining some constants that will be important in the description of the moduli spaces of quadratic cyclotomic extension. 
In this section, our aim is to introduce fundamental constants essential for comprehending quadratic cyclotomic extensions. These constants provide important information about the structure of the set of quadratic cyclotomic extensions. 
\begin{definition}\label{maxvp}
\begin{enumerate} 
\item We define $\nu_{p^\infty_F}^+$ to be
$$\left\{\begin{array}{clll} \operatorname{max} \{ k \in \mathbb{N} | \zeta_{\operatorname{t}_F(p^k)} +
	\zeta_{\operatorname{t}_F(p^k)}^{-1} \in F, \ \exists \zeta_{p^k} \in \mathcal{P}_{p^k} \} & \text{when it exists} \\ 
	\infty & \text{ otherwise.}  \end{array} \right.$$
\item 	We define $\nu_{p^\infty_F}$  to be
	$$\left\{ \begin{array}{clll} \nu_{p^\infty_F}^++1 & \text{when} \ p=2\ \text{ and $F$ has property $\mathcal{C}_2$};  \\ 
	\nu_{p^\infty_F}^+ & \text{ otherwise. }  \end{array} \right.$$ 
\item We define the map $\kappa_{p, F}^+$ defined by
$$ \begin{array}{clllll} \kappa_{p,F}^+:& \mu_{p^\infty} & \rightarrow & \frac{F(\mu_{p^\infty})}{F}   \\ 
	& \zeta_{p^e} & \mapsto & [\zeta_{\operatorname{t}_F(p^e)} + \zeta_{\operatorname{t}_F(p^e)}^{-1}]_F
\end{array} $$
\end{enumerate} 
\end{definition}
\begin{remark}\label{remark-vpgeneral}
	Suppose $\nu_{2^\infty_F}<\infty$. We have $F$ has property $\mathcal{C}_2$ if and only if $\zeta_{\operatorname{t}_F(2^{\nu_{2^\infty_F}})}-\zeta_{\operatorname{t}_F(2^{\nu_{2^\infty_F}})}^{-1}\in F$. Moreover, $\nu_{2^\infty_F}=c_2$. In addition to that, $F$ does not have property $\mathcal{C}_2$  if and only if $\zeta_{\operatorname{t}_F(2^{\nu_{2^\infty_F}})}+\zeta_{\operatorname{t}_F(2^{\nu_{2^\infty_F}})}^{-1}\in F$. 
The above statements are direct consequences of Lemma \ref{vp-propertyQ} and Lemma \ref{prime-equaliser}.
\end{remark}
Let's begin by describing $\operatorname{Eq}(\kappa_{p,F}^+, 0_{\mu_\infty})$ as a group of roots of unity, where the orders of these roots of unity are entirely determined by the newly introduced constants when $p$ is odd. This result underscores the pivotal role of these constants in comprehending quadratic cyclotomic extensions.
\begin{lemma}\label{prime-equaliser}
	Let $p$ be a prime number. We have 
	$\operatorname{Eq}(\kappa_{p,F}^+, 0_{\mu_{p^\infty}})= \mu_{p^{
			\nu_{p^\infty_F}}}.$  
\end{lemma}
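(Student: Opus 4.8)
The plan is to unwind both definitions and then reduce the statement to a monotonicity (downward-closure) property of the levels at which $\kappa_{p,F}^+$ vanishes. First I would record what membership in the equaliser means: since $0_{\mu_\infty}$ sends every root of unity to $0$ (Definition \ref{zero-map}), an element $\zeta$ lies in $\operatorname{Eq}(\kappa_{p,F}^+, 0_{\mu_\infty})$ exactly when $\kappa_{p,F}^+(\zeta)$ is the zero class $[0]_F$, i.e. when $\zeta_{t_{p^e,F}} + \zeta_{t_{p^e,F}}^{-1} \in F$. By Remark \ref{muinfinity-element}(2) every element of $\mu_{p^\infty}$ is a primitive $(p^e)^{th}$ root of unity for a unique $e$, and by Remark \ref{rootsunity-divisor} (all primitive $(p^e)^{th}$ roots share the same order over $F$, hence the same $t_{p^e,F}$, and the condition is stable under the Galois action) this condition depends only on $e$, not on the chosen primitive root. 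Thus the equaliser is precisely $\bigcup_{e\in S}\mathcal{P}_{p^e}$, where $S=\{e \mid \zeta_{t_{p^e,F}}+\zeta_{t_{p^e,F}}^{-1}\in F\}$, while $\mu_{p^{\nu_{p,F}^+}}=\bigcup_{e\le \nu_{p,F}^+}\mathcal{P}_{p^e}$. Noting that $0\in S$ (there $t_{p^0,F}=1$ and the value is $2\in F$), it therefore suffices to prove $S=\{0,1,\dots,\nu_{p,F}^+\}$, with the convention $\mu_{p^\infty}$ when $\nu_{p,F}^+=\infty$.

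The inclusion $\operatorname{Eq}(\kappa_{p,F}^+, 0_{\mu_\infty})\subseteq \mu_{p^{\nu_{p,F}^+}}$ is immediate from Definition \ref{max-vp}, which declares $\nu_{p,F}^+$ to be the maximum of $S$: any $\zeta\in\mathcal{P}_{p^e}$ in the equaliser has $e\in S$, hence $e\le \nu_{p,F}^+$, so $\zeta\in\mu_{p^e}\subseteq\mu_{p^{\nu_{p,F}^+}}$.

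The reverse inclusion is the heart of the matter and amounts to showing $S$ is downward closed. Fix $\nu:=\nu_{p,F}^+$ and $e\le\nu$; I must show $\zeta_{t_{p^e,F}}+\zeta_{t_{p^e,F}}^{-1}\in F$. By Definition \ref{tnf}, $t_{p^e,F}\in\{1,2,p^e\}$, the value $2$ occurring only for $p=2$. If $t_{p^e,F}\in\{1,2\}$ the relevant element is $2$ or $-2$ and lies in $F$ automatically, so only the case where $t_{p^e,F}$ is the full prime power remains, i.e. $\zeta_{t_{p^e,F}}=\zeta_{p^e}$. In that case $o_F(\zeta_{p^e})$ is nontrivial ($>1$ for odd $p$, $>2$ for $p=2$); since $\zeta_{p^e}=\zeta_{p^\nu}^{p^{\nu-e}}$ the order is monotone in the level, so $o_F(\zeta_{p^\nu})$ is nontrivial as well and $t_{p^\nu,F}=p^\nu$. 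As $\nu\in S$ this gives $\zeta_{p^\nu}+\zeta_{p^\nu}^{-1}\in F$, while $\zeta_{p^\nu}\cdot\zeta_{p^\nu}^{-1}=1\in F$. Applying Lemma \ref{waring} with $k'=1$, $k=-1$ and $t=p^{\nu-e}$ yields $\zeta_{p^\nu}^{p^{\nu-e}}+\zeta_{p^\nu}^{-p^{\nu-e}}\in F$, and by the divisor convention (Remark \ref{rootsunity-divisor}) this equals $\zeta_{p^e}+\zeta_{p^e}^{-1}$, as required. When $\nu=\infty$ the same descent from arbitrarily large levels establishes the property for every $e$, so $S=\mathbb{N}\cup\{0\}$ and the equaliser is all of $\mu_{p^\infty}$.

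The main obstacle I anticipate is the bookkeeping in the descent step: one must confirm via Remark \ref{tnf-remark} that whenever $t_{p^e,F}$ is the full prime power the value $t_{p^\nu,F}$ at the maximal level is also the full prime power (so that $\zeta_{p^\nu}+\zeta_{p^\nu}^{-1}\in F$ is genuinely available), and match the exponent $p^{\nu-e}$ so that $\zeta_{t_{p^\nu,F}}^{p^{\nu-e}}=\zeta_{t_{p^e,F}}$. One should also dispatch the degenerate case where $p$ equals the characteristic of $F$, in which $\mu_{p^\infty}=\{1\}$ by Remark \ref{muinfinity-element}(3) and both sides collapse to $\{1\}$.
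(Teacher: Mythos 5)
Your proposal is correct and follows essentially the same route as the paper's proof: both get $\operatorname{Eq}(\kappa_{p,F}^+,0_{\mu_\infty})\subseteq\mu_{p^{\nu_{p,F}^+}}$ directly from the maximality in Definition \ref{max-vp}, and both prove the reverse inclusion by the Waring descent of Lemma \ref{waring} combined with the multiplicative relation between $t_{p^e,F}$ at different levels (Remark \ref{tnf-remark}), handling $\nu_{p,F}^+=\infty$ by the same descent (the paper phrases it as a contradiction, you phrase it directly). Your explicit case split $t_{p^e,F}\in\{1,2\}$ versus $t_{p^e,F}=p^e$ and your dispatch of the degenerate case $\operatorname{char}(F)=p$ are only bookkeeping refinements of the paper's argument, not a different method.
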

\begin{proof}
	We set ${\bf k} :=\nu_{p^\infty_F}$.  We want to prove that $\operatorname{Eq}( \kappa_{p,F}^+, 0_{\mu_\infty})= \mu_{p^{
			\nu_{p^\infty_F}}}$.   
	
	We will prove this in two cases:
	
	\textbf{Case 1}: We assume that ${\bf k} <\infty$.
	\begin{enumerate}
	\item We suppose that $p$ is odd or, $p=2$ and $F$ does not have property $\mathcal{C}_2$. 
	Let $\zeta_{p^{\bf k}}$ be a arbitrary primitive $(p^{\bf k})^{\operatorname{th}}$ root of unity in $\mu_{p^{\bf k}}$. By Definition \ref{maxvp}, $\nu_{p^\infty_F}^+=\nu_{p^\infty_F}$ and $\zeta_{\operatorname{t}_F(p^{\bf k})}+ \zeta_{\operatorname{t}_F(p^{\bf k})}^{-1}\in F$ proving that $\zeta_{p^{\bf k}}\in \operatorname{Eq}(\kappa_{p,F}^+, 0_{\mu_{p^\infty}})$. Using Lemma \ref{waring}, one can prove that we have that $\mathcal{P}_{p^{\bf k}}\subseteq \operatorname{Eq}(\kappa_{p,F}^+, 0_{\mu_{p^\infty}})$. %using the fact that when fixing $\zeta_{p^{\bf k}}$, any other primitive $(p^{\bf k})^{th}$ root of unity is $\zeta_{p^{\bf k}}^j$ where $(j, p^{\bf k})=1.  
	Now we take an arbitrary element of $\mu_{p^{\bf k}}$, that is $\zeta_{p^s}$ where $s\leq {\bf k}$, again using Remark \ref{tnf-remark} and Lemma \ref{waring}, we obtain $\zeta_{p^s}\in \operatorname{Eq}(\kappa_{p, F}^+, 0_{\mu_{p^\infty}})$.
	%By Remark \ref{tnf-remark} we know that $\operatorname{t}_F(p^{\bf k})=d\operatorname{t}_F(p^s)$ where $d$ is some power of $p$. So that $\zeta_{\operatorname{t}_F(p^s)}+\zeta_{\operatorname{t}_F(p^s)}^{-1}=\zeta_{\operatorname{t}_F(p^{\bf k})}^{d}+ \zeta_{\operatorname{t}_F(p^{\bf k})}^{-d}\in F$ by Lemma \ref{waring}. Hence, $\zeta_{p^s}\in \operatorname{Eq}(\kappa_{p, F}^+, 0_{\mu_{p^\infty}})$, and therefore $\mu_{p^{\bf k}}\subseteq \operatorname{Eq}(\kappa_{p, F}^+, 0_{\mu_{p^\infty}})$. 
	We prove the reverse inclusion, using Lemma \ref{waring} and noting, by Definition \ref{maxvp}, that ${\bf k}$ is the maximum number such that $\zeta_{p^{\bf k}}\in \operatorname{Eq}(\kappa_{p, F}^+, 0_{\mu_{p^\infty}})$.
	Therefore, we have proven that $\operatorname{Eq}(\kappa_{p, F}^+, 0_{\mu_{p^\infty}})=\mu_{p^{{\bf k}}}$. 
	\item We suppose that $p=2$ and $F$ has property $\mathcal{C}_2$.  By Remark \ref{remark-vpgeneral}, we obtain that $c_2=\nu_{2^\infty_F}$ . Moreover, for $e< c_2$, by Lemma \ref{property-q}, we deduce that
	%	  $\operatorname{min}(\zeta_{2^e}, F)= x^2-(\zeta_{2^e}+\zeta_{2^e}^{-1})x+1$ by Lemma \ref{valuesofkfor2^e}. It then follows by Definition \ref{tnf} that
	$\zeta_{\operatorname{t}_F(2^e)}+\zeta_{\operatorname{t}_F(2^e)}^{-1}\in F$. From there, applying a reasoning as in $(1)$, we can again deduce the equality wanted.
	\end{enumerate}
	\textbf{Case 2:} When ${\bf k}=\infty$, the result follows directly from Definition \ref{zero-map} and Definition \ref{maxvp}. %does not exist as a finite number, that is 
	%we have $\nu_{p^\infty_F}^+=\infty$. We want to prove that $\operatorname{Eq}(\kappa_{p, F}^+, 0_{\mu_\infty})=\mu_{p^{\infty}}$. Indeed, we have $\operatorname{Eq}(\kappa_{p, F}^+, 0_{\mu_{p^\infty}})\subseteq\mu_{p^{\infty}}$ by Definition \ref{maxvp} (2). and Definition \ref{zero-map}. Also we have $\mu_{p^{\infty}} \subseteq \operatorname{Eq}(\kappa_{p, F}^+, 0_{\mu_{p^\infty}})$ by Definition \ref{maxvp} (1). Therefore, $\operatorname{Eq}(\kappa_{p, F}^+, 0_{\mu_{p^\infty}})=\mu_{p^{\infty}}$ completing the proof.  
\end{proof}
\begin{remark}
Given $p$ be a prime number. We have 
	$\operatorname{Eq}(\kappa_{p,F}^+, 0_{\mu_{p^\infty}})= \mu_{p^{
			\nu_{p^\infty_F}^+}}.$  
\end{remark}
We now relate the constant $\nu_{2^\infty_F}^+$ with the constant $c_2$.
\begin{lemma}\label{vp-propertyQ}
	If $F$ has property $\mathcal{C}_2$, then $\nu_{2^\infty_F}^+=c_2-1<\infty$. 
\end{lemma}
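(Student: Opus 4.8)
The plan is to read off $\nu_{2,F}^+$ from the equaliser description in Lemma \ref{prime-equaliser} and then pin it down by testing the defining condition of $\kappa_{2,F}^+$ at the two indices $c_2-1$ and $c_2$. Write $c_2$ for the unique integer with property $\mathcal{C}_2$. By Lemma \ref{prime-equaliser} we have $\operatorname{Eq}(\kappa_{2,F}^+,0_{\mu_\infty})=\mu_{2^{\nu_{2,F}^+}}$, and by Definitions \ref{max-vp} and \ref{zero-map} a primitive root $\zeta_{2^e}$ lies in this equaliser precisely when $\zeta_{t_{2^e,F}}+\zeta_{t_{2^e,F}}^{-1}\in F$. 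It therefore suffices to show that this membership holds for $e=c_2-1$ but fails for $e=c_2$: combined with the group description $\mu_{2^{\nu_{2,F}^+}}$, this will force $c_2-1\le \nu_{2,F}^+<c_2$.

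First I would handle $e=c_2-1$. Since $c_2-1<c_2$, part $(2)$ of Definition \ref{property-q} gives $\zeta_{t_{2^{c_2-1},F}}+\zeta_{t_{2^{c_2-1},F}}^{-1}\in F$, so $\zeta_{2^{c_2-1}}\in \operatorname{Eq}(\kappa_{2,F}^+,0_{\mu_\infty})=\mu_{2^{\nu_{2,F}^+}}$. As $\zeta_{2^{c_2-1}}$ is a primitive $(2^{c_2-1})$-th root of unity, membership in $\mu_{2^{\nu_{2,F}^+}}$ yields $c_2-1\le \nu_{2,F}^+$.

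The core of the argument is the index $e=c_2$. By part $(1)$ of Definition \ref{property-q}, $o_F(\zeta_{2^{c_2}})=2^{c_2-1}$; since property $\mathcal{C}_2$ requires $t_{2^{c_2},F}\neq 2$, Definition \ref{tnf} excludes $o_F(\zeta_{2^{c_2}})=2$, and as $\zeta_{2^{c_2}}\notin F$ also excludes $o_F(\zeta_{2^{c_2}})=1$, we get $2^{c_2-1}>2$ and hence $t_{2^{c_2},F}=2^{c_2}$, so $\zeta_{t_{2^{c_2},F}}=\zeta_{2^{c_2}}$. Now suppose for contradiction that $\zeta_{2^{c_2}}+\zeta_{2^{c_2}}^{-1}\in F$. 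Property $\mathcal{C}_2$ also gives $\zeta_{2^{c_2}}-\zeta_{2^{c_2}}^{-1}\in F$; adding these two elements produces $2\zeta_{2^{c_2}}\in F$. Because $\zeta_{2^{c_2}}$ is a nontrivial $2$-power root of unity we are in characteristic $\neq 2$, so $2$ is invertible in $F$ and $\zeta_{2^{c_2}}\in F$, contradicting $\zeta_{2^{c_2}}\notin F$. Thus $\zeta_{t_{2^{c_2},F}}+\zeta_{t_{2^{c_2},F}}^{-1}\notin F$, so $\zeta_{2^{c_2}}\notin \mu_{2^{\nu_{2,F}^+}}$, which forces $\nu_{2,F}^+<c_2$.

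Combining the two bounds gives $\nu_{2,F}^+=c_2-1$, and since $c_2\in\mathbb{N}$ this is finite, so $\nu_{2,F}^+=c_2-1<\infty$. The only delicate point is the computation $t_{2^{c_2},F}=2^{c_2}$: one must use that property $\mathcal{C}_2$ rules out $o_F(\zeta_{2^{c_2}})=2$, so that the first branch of Definition \ref{tnf} applies rather than the third; after that, the failure of membership at $c_2$ drops out immediately from adding the two radical relations.
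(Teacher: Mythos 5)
Your proof is correct, and it is built from the same two pillars as the paper's: the lower bound $c_2-1\le\nu_{2,F}^+$ comes from part $(2)$ of Definition \ref{property-q}, and the upper bound comes from the incompatibility of the relation $\zeta_{2^{c_2}}+\zeta_{2^{c_2}}^{-1}\in F$ with the defining relation $\zeta_{2^{c_2}}-\zeta_{2^{c_2}}^{-1}\in F$ of property $\mathcal{C}_2$. The difference is in how the upper bound is packaged. The paper never invokes Lemma \ref{prime-equaliser}; instead it argues that membership would propagate downward from any $f>c_2$ to $c_2$ via Waring's formula (Lemma \ref{waring}) together with Remark \ref{tnf-remark}, and it treats the resulting statement $\zeta_{t_{2^{c_2},F}}+\zeta_{t_{2^{c_2},F}}^{-1}\in F$ as an evident absurdity without spelling out why; in fact the paper's proof never explicitly rules out membership at $f=c_2$ itself, relying tacitly on the $2\zeta_{2^{c_2}}\in F$ contradiction that appears only inside the uniqueness argument of Definition \ref{property-q}. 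You instead use the group description $\operatorname{Eq}(\kappa_{2,F}^+,0_{\mu_\infty})=\mu_{2^{\nu_{2,F}^+}}$ of Lemma \ref{prime-equaliser}, which internalizes that downward propagation, so that a single explicit failure at $e=c_2$ suffices; and you carry out that failure in full (the computation $t_{2^{c_2},F}=2^{c_2}$, the addition of the two relations, and the invertibility of $2$ since property $\mathcal{C}_2$ forces characteristic $\neq 2$). Your route is therefore slightly cleaner and fills in the step the paper leaves implicit, at the modest cost of depending on Lemma \ref{prime-equaliser}, which fortunately precedes this lemma in the paper, so no circularity arises.
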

\begin{proof}
This result is a direct consequence of Lemma \ref{property-q}.
\end{proof}
%The previously defined constant requires further refinement in the case of even degrees, contingent upon whether the base field possesses property $C_2$ or not.

%Now, let us make some observations about the elements $\ell_{2^\infty_F}$ and $\nu_{2^\infty_F}$. 
%In the first lemma we will see how these elements are related and 
%ompares the elements $\ell_{2^\infty_F}$ and $\nu_{2^\infty_F}$.
The following Theorem provides a valuable description of $\nu_{2^\infty_F}$ and it can be obtained by combining Theorem \ref{Equali}, Lemma \ref{prime-equaliser} and Theorem \ref{g2f}.

\begin{theorem}\label{nu2f}
	We have $$\nu_{2^\infty_F}=\left\{ \begin{array}{llll}\operatorname{max}\{k\in \mathbb{N}| F(\zeta_{2^k})=F(\zeta_4)\}, & \text{when it exists and }\ \zeta_4 \notin F;  \\ 
		\ell_{2^\infty_F}+1, & \text{when} \ \zeta_4 \in F \ \text{and}\ \ell_{2^\infty_F}<\infty;\\
		\infty , & \text{otherwise.}  \end{array} \right.$$ 
We note that $\nu_{2^\infty_F}=\infty$ if and only if $\ell_{2^\infty_F}=\infty$ or $\ell_{2^\infty_F}=1$, and for all $k\in \mathbb{N}$, $F(\zeta_{2^k})=F(\zeta_4)$.
\end{theorem}

The following result can be derived from the preceding theorem.
 \begin{corollary}\label{equaltozeta4}
	If $\zeta_4 \notin F$, then $F(\zeta_4)=F(\zeta_{2^i})$ for all $i\in \{2, \cdots, \nu_{2^\infty_F}\}$.
\end{corollary}

\subsection{Describing quadratic \texorpdfstring{$p$}{Lg}-cyclotomic sets as difference of two groups}
We are ready to describe the set $\mathscr{M}_{2, \overline{F}}^p$ of roots of unity in $\mu_{p^{\infty}}$ defining quadratic extensions and the set $\mathscr{C}_{2,\overline{F}}^p$ of quadratic $p$ power cyclotomic extension, thanks to the constant $\nu_{p^\infty_F}$. Combining Lemma \ref{prime-equaliser} and Theorem \ref{Equali}, we obtain easily the following theorem.
\begin{theorem}\label{setofgenerators}
Let $p$ be a prime number. We have the following:
\begin{enumerate}
	\item The set of roots of unity in $\mu_{p^{\infty}}$ defining quadratic cyclotomic extensions can be described as a difference of two groups:
	\[
	\mathscr{M}_{2, \overline{F}}^p=\mu_{p^{\nu_{p^\infty_F}}}\setminus \mu_{p_F^\infty}.
	\]
		
	\item The set of quadratic $p$ power cyclotomic extensions can be expressed as:
	\[
	\mathscr{C}_{2,\overline{F}}^p = \begin{cases}
		\emptyset, & \text{when } \ell_{p^\infty_F}=\nu_{p^\infty_F}; \\
		\left\{  F\left(\zeta_{p^{\ell_{p^\infty_F}+1}} \right) \right\}, & \text{otherwise}.
	\end{cases}
	\]
\end{enumerate}
\end{theorem}

\begin{remark}
	\begin{enumerate} %We note that;
		\item $\mathscr{M}_{2,\overline{F}}^{2}=\emptyset$ if and only if $\mu_{2^{\infty}_F}=\mu_{2^{\infty}}$. However, when $p$ is odd, $\mathscr{M}_{2,\overline{F}}^{p}$ can be equal to $\emptyset$ and $ \mu_{p^{\infty}}\neq \mu_{p^{\infty}_F}$ (see Corollary \ref{degree-odd}). %Indeed, we have $[F(\zeta_{p^k}):F]|(p-1)p^{k-1}$ (see \cite[Theorem 2.1]{conradcyclotomic}). So, when $p$ is odd and $\zeta_{p^k}\notin F$, the degree $[F(\zeta_{p^k}):F]$ could be a power of $p$.
		\item When $\mathscr{M}_{2,\overline{F}}^{p}\neq \emptyset$, then $\ell_{p^\infty_F} \neq \nu_{p^\infty_F}$. We note that, when $\nu_{p^\infty_F}>0$, $F(\zeta_{p^{\nu_{p^\infty_F}}})=F(\zeta_{p})$. %$\mathscr{M}_{2,\overline{F}}^{2}\neq \emptyset$, $F(
		\item When $\ell_{2^\infty_F}<\infty$ and $\ell_{2^\infty_F}\neq 1$ we have $\mathscr{M}_{2, \overline{F}}^{2}= \mathscr{G}_{2,\overline{F}}^2=\mathcal{P}_{2^{\ell_{2^\infty_F}+1}}$ (see Theorem \ref{g2f}) and therefore, $\mathscr{C}_{2, F}^2 = \mathscr{R}_{2,\overline{F}}^2=  \mathscr{R}_{2,\overline{F}}$.
%		 When $p$ is odd, $\zeta_{p^k}\notin F$ does not impli $F(\zeta_{p^k})$ admits a subfield of degree $2$ over $F$ generated by $\zeta_{p^t}$ where $t\leq k$.

% $[F(\zeta_{p^k}):F]|(p-1)p^{k-1}$ by Lemma \ref{degree-rootofunity}. That means that $[F(\zeta_{p^k}):F]$ can be of power of $p$ which would imply that $F(\zeta_{p^k})/F$ is of odd degree. 
	\end{enumerate}
\end{remark}

\subsection{Describing the set of quadratic cyclotomic primitive roots of unity using groups} 

In the above we have described the set of quadratic cyclotomic extensions generated by primitive $p$ power primitive roots of unity. We now combine those results to describe the set of quadratic cyclotomic extensions in general. We start by the following definitions that will be useful in constructing the required results.
\begin{definition}\label{sn}
\begin{enumerate}
\item Let $n\in \mathbb{N}$. We define 
$$S_{F,n}=\{p\in \mathbb{P}| \ p|\operatorname{o}_F(\zeta_n)\}.$$
\item We define the set 
$$\mathcal{S}_F:=\{S\in P(\mathbb{P})| 
\forall \ p\in S, \exists e_p \in \mathbb{N}:[ \zeta_{p^{e_p}}\in \mathscr{M}_{2, \overline{F}}\ \wedge \forall \ B\finsub S, \zeta_{\prod\limits_{p\in B}p^{e_p}} \in \mathscr{M}_{2, \overline{F}}]\}.$$
\end{enumerate} 
%We will denote $S_{F,n}$ as $S_n$ and $\mathcal{S}_F$ as $\mathcal{S}$, since $F$ is fixed.
\end{definition}
Any chain in $\mathcal{S}_F$ admits an upper bound. The proof is omitted as it is straightforward.
\begin{lemma} 
For a given maximal chain $\mathbf{C}:=S_1\subseteq S_2\subseteq \cdots \subseteq S_n$ in $\mathcal{S}_F$, the set $ \bigcup \limits^{n}_{i=1}S_i$ is an upper bound for the chain $\mathbf{C}$. 
\end{lemma}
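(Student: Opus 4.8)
The plan is to take the candidate upper bound to be the union $U := \bigcup_i S_i$ of the sets in the chain $\mathbf{C}$, and to verify the two requirements for $U$ to be an upper bound in the poset $(\mathcal{S}, \subseteq)$: first that $S_i \subseteq U$ for every $i$, which is immediate from the definition of a union, and second that $U \in \mathcal{S}$, which carries all the content. Recalling Definition \ref{sn}, membership $U \in \mathcal{S}$ amounts to producing, for each prime $p \in U$, an exponent $e_p \in \mathbb{N}$ with $\zeta_{p^{e_p}} \in \mathscr{M}_{2,\overline{F}}$, in such a way that in addition $\zeta_{\prod_{p \in B} p^{e_p}} \in \mathscr{M}_{2,\overline{F}}$ for every finite $B \finsub U$.

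First I would record the structural consequence of membership in $\mathcal{S}$. If $S \in \mathcal{S}$ with witnessing exponents $(e_p)_{p\in S}$, then for any two $p, q \in S$ the pairwise condition (the case $B = \{p,q\}$) gives $[F(\zeta_{p^{e_p}q^{e_q}}):F] = 2$; since $p^{e_p}$ and $q^{e_q}$ are coprime, Lemma \ref{productrootofunity} identifies $F(\zeta_{p^{e_p}q^{e_q}})$ with the compositum $F(\zeta_{p^{e_p}}, \zeta_{q^{e_q}})$, and as both $F(\zeta_{p^{e_p}})$ and $F(\zeta_{q^{e_q}})$ are quadratic while their compositum is quadratic, they must coincide. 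Hence every nonempty $S \in \mathcal{S}$ determines a single quadratic extension $L_S$ with $F(\zeta_{p^{e_p}}) = L_S$ for all $p \in S$.

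The key step is then to produce one coherent global choice of exponents on all of $U$. If $U$ contains no odd prime, then $U \subseteq \{2\}$ has at most one element and the statement is essentially trivial, the exponent being inherited from any member $S_j$ of the chain with $2 \in S_j$. Otherwise I fix an odd prime $q_0 \in U$; by Lemma \ref{allcyclotomicofdegree2} one has $F(\zeta_{q_0^{e}}) = F(\zeta_{q_0})$ for every admissible exponent $e$, so $L := F(\zeta_{q_0})$ is unambiguously defined. For an arbitrary $p \in U$, the totality of the chain lets me select a single member $S_j$ of $\mathbf{C}$ containing both $p$ and $q_0$ (of the two chain members respectively containing $p$ and $q_0$, one contains the other); since $q_0 \in S_j$ forces $L_{S_j} = F(\zeta_{q_0}) = L$, the witnessing exponent $e_p := e_p^{(j)}$ of $S_j$ satisfies $F(\zeta_{p^{e_p}}) = L$. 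This defines $(e_p)_{p \in U}$ with $\zeta_{p^{e_p}} \in \mathscr{M}_{2,\overline{F}}$ and $F(\zeta_{p^{e_p}}) = L$ for every $p \in U$.

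Finally I would verify the finite-subset condition for this global choice. For $B \finsub U$ the orders $\{p^{e_p}\}_{p \in B}$ are pairwise coprime, so Remark \ref{rootsunity-divisor} and Lemma \ref{productrootofunity} give $\zeta_{\prod_{p\in B} p^{e_p}} = \prod_{p\in B}\zeta_{p^{e_p}}$ and $F(\zeta_{\prod_{p\in B} p^{e_p}}) = F(\zeta_{p^{e_p}} : p \in B)$, which is a compositum of copies of the single quadratic field $L$ and hence equals $L$; thus $\zeta_{\prod_{p\in B} p^{e_p}} \in \mathscr{M}_{2,\overline{F}}$, establishing $U \in \mathcal{S}$. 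The main obstacle is exactly this coherence of the exponents: a priori distinct sets in the chain could witness membership with incompatible exponents, or even with different quadratic extensions, and the argument turns on two facts, that an odd prime rigidly pins down its quadratic extension (Lemma \ref{allcyclotomicofdegree2}), and that any finite amount of data drawn from $U$ already lies inside a single member of the totally ordered chain.
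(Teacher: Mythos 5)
Your proof is correct, and it follows the same overall skeleton as the paper's — take the union $U$ of the chain as the candidate upper bound, and use the total ordering of the chain to place finite data inside a single chain member — but it diverges at the crucial verification that $U \in \mathcal{S}$, and the divergence is to your credit. The paper picks, for each $p \in U$, a witnessing exponent $e_p$ from \emph{some} chain member containing $p$; then, given $B \finsub U$, it locates one member $A_{j_0}$ containing all of $B$ (the one of maximal index) and concludes $\zeta_{\prod_{p\in B} p^{e_p}} \in \mathscr{M}_{2,\overline{F}}$ ``by Definition \ref{sn}''. That last step is not immediate: Definition \ref{sn} applied to $A_{j_0}$ only guarantees the product condition for $A_{j_0}$'s \emph{own} witnessing exponents, which need not agree with the $e_p$ assembled from other members of the chain. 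Your argument closes exactly this coherence gap: you show each $S \in \mathcal{S}$ pins down a single quadratic field $L_S$, anchor everything to $L = F(\zeta_{q_0})$ for an odd prime $q_0$ (whose quadratic field is exponent-independent by Lemma \ref{allcyclotomicofdegree2}), extract a globally coherent family with $F(\zeta_{p^{e_p}}) = L$ for all $p \in U$, and then obtain the finite-subset condition from a compositum computation via Lemma \ref{productrootofunity}, rather than by citing the definition for one member. The costs are minor: a case split when $U$ contains no odd prime, and the reliance on Lemma \ref{allcyclotomicofdegree2}. A still shorter repair, avoiding the anchor entirely, works prime by prime: for a fixed prime $p$ (including $p=2$), any two exponents $e \le f$ with $\zeta_{p^e}, \zeta_{p^f} \in \mathscr{M}_{2,\overline{F}}$ give the same field, since $F(\mu_{p^e}) \subseteq F(\mu_{p^f})$ and both are quadratic over $F$; hence the product condition is insensitive to which admissible exponents are chosen, and the paper's appeal to Definition \ref{sn} becomes legitimate. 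One final remark applying to both proofs: as literally stated the chain is finite, so $U = S_n$ and the lemma is trivial; the argument only earns its keep for arbitrary (possibly infinite) chains, which is what the Zorn-style construction of $\mathcal{S}_{max}$ requires, and both your proof and the paper's go through verbatim in that setting.
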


The preceding lemma allows us to define the collection of upper bounds for maximal chains within $\mathcal{S}_F$, and for each element within this collection, we can associate canonically a group.
\begin{definition} \label{munF}
%\begin{enumerate}
 %   \item
    Let $\mathcal{S}_{F,\operatorname{max}}$ be defined as the set of upper bounds of maximal chains in $\mathcal{S}_F$. 
 %   \item For any $M \in \mathcal{S}_{F, \operatorname{max}}$, we define:   $$\mu_{F,M}=\bigvee \limits_{p\in M}\mu_{p^{\nu_{p,F}}} \bigvee \limits_{p \in \mathbb{P}\backslash M}\mu_{p^{\ell_{p_F^\infty}}}.$$  In other words, $\mu_{F,M}$ is a finite product of $p$ power primitive roots of unity, where the power is less than or equal to $\nu_{p,F}$ if $p \in M$, or less than or equal to $\ell_{p_F^\infty}$ if $p \in \mathbb{P}\backslash M$. We note that this set is fully determined by the $\nu_{p,F}$'s and $\ell_{p_F^\infty}$'s.
%\end{enumerate}
\end{definition}
In the following Lemma, we prove that two elements of $\mathcal{S}_{F, \operatorname{max}}$ are either disjoint or equal. 
\begin{lemma}\label{powerset-cyclotomic}
For any $M_1, M_2 \in \mathcal{S}_{F,\operatorname{max}}$, $M_1\cap M_2 \neq \emptyset $ if and only if $M_1 = M_2$. 
\end{lemma}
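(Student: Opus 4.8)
The plan is to prove the substantive implication --- that a common prime forces equality --- by showing that elements of $\mathcal{S}$ sharing a prime can be merged: concretely, if $M_1, M_2 \in \mathcal{S}$ and $p_0 \in M_1 \cap M_2$, then $M_1 \cup M_2 \in \mathcal{S}$. Since every $M \in \mathcal{S}_{max}$ is a maximal element of $(\mathcal{S}, \subseteq)$ --- being the upper bound, hence the top, of a maximal chain, so that nothing in $\mathcal{S}$ properly contains it, else the chain would extend --- this merging property immediately yields $M_1 = M_1 \cup M_2 = M_2$. The reverse implication is the easy one: once we know each $M \in \mathcal{S}_{max}$ is nonempty, $M_1 = M_2$ gives $M_1 \cap M_2 = M_1 \neq \emptyset$.

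First I would reinterpret membership in $\mathcal{S}$ extension-theoretically. For $S \in \mathcal{S}$ and $p, q \in S$ with their chosen exponents, Definition \ref{sn} applied to $B = \{p\}$, $\{q\}$, $\{p, q\}$ gives $\zeta_{p^{e_p}}, \zeta_{q^{e_q}}, \zeta_{p^{e_p}q^{e_q}} \in \mathscr{M}_{2,\overline{F}}$; by Lemma \ref{productrootofunity} the field $F(\zeta_{p^{e_p}q^{e_q}})$ is the compositum $F(\zeta_{p^{e_p}})(\zeta_{q^{e_q}})$, which can have degree $2$ over $F$ only if the two quadratic subextensions coincide. Hence all primes of $S$ produce one and the same quadratic extension $K_S := F(\zeta_{p^{e_p}})$. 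Moreover, for a fixed prime this quadratic extension is intrinsic: by Lemma \ref{allcyclotomicofdegree2} for odd $p$, and by Lemma \ref{order-zeta2e} together with Lemma \ref{valuesofkfor2^e} for $p=2$, all admissible powers $p^{e}$ give the same field, so I may write $K_p$ unambiguously.

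With this dictionary, I would finish as follows. If $p_0 \in M_1 \cap M_2$, then $K_{M_1} = K_{p_0} = K_{M_2} =: K$, so every prime in $M_1 \cup M_2$ yields the single quadratic extension $K$. For any finite $B \subseteq M_1 \cup M_2$, the compositum $F(\zeta_{\prod_{p \in B} p^{e_p}})$ is again $K$ by Lemma \ref{productrootofunity}, hence of degree $2$, so $\zeta_{\prod_{p\in B} p^{e_p}} \in \mathscr{M}_{2,\overline{F}}$ and $M_1 \cup M_2 \in \mathcal{S}$; maximality of $M_1$ and $M_2$ then forces $M_1 = M_2$.

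The main obstacle I anticipate is the $p = 2$ bookkeeping. The real care lies in checking that the quadratic extension attached to the prime $2$ is genuinely well-defined across the possible exponents and normalizations --- the $\zeta_{2^e} \pm \zeta_{2^e}^{-1}$ dichotomy and property $\mathcal{C}_2$ of Definition \ref{property-q} --- and in confirming uniformly across odd and even primes that two degree-$2$ subextensions inside a degree-$2$ extension must coincide, not merely be contained (immediate by equality of degrees, but it must be invoked in each case). A secondary point to nail down is that each $M \in \mathcal{S}_{max}$ is truly a maximal element of $(\mathcal{S}, \subseteq)$ and is nonempty, so that both directions of the equivalence close.
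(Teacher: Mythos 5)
Your proof is correct and is essentially the paper's own argument: both rest on the observation that a shared prime forces all the relevant prime powers to generate one and the same quadratic field, so that an element of $\mathcal{S}_{max}$, being a maximal element of $(\mathcal{S},\subseteq)$, could otherwise be strictly enlarged inside $\mathcal{S}$ --- the paper adjoins a single prime $q\in M_1\setminus M_2$ to $M_2$ and closes the finite-subset check with Lemma \ref{equality}, whereas you merge $M_1\cup M_2$ wholesale via Lemma \ref{productrootofunity} and a degree count, a purely organizational difference. One remark: the $p=2$ bookkeeping you flag as the main obstacle is actually vacuous, since for any prime $p$ and $e\le f$ one has $\mu_{p^e}\subseteq\mu_{p^f}$, hence $F(\zeta_{p^e})\subseteq F(\zeta_{p^f})$, and equality of degrees forces equality of fields uniformly, with no need to invoke Lemma \ref{order-zeta2e}, Lemma \ref{valuesofkfor2^e}, or property $\mathcal{C}_2$.
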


\begin{proof}
Assume $M_1, M_2 \in \mathcal{S}_{F,\operatorname{max}}$. Suppose $M_1\cap M_2 \neq \emptyset $, then there exists $p_0 \in M_1 \cap M_2$, implying $\zeta_{p_0^{e_{p_0}}}\in \mathscr{M}_{2, \overline{F}}$ for some $e_{p_0} \in \mathbb{N}$.

Suppose, for contradiction, that $M_1 \neq M_2$. Without loss of generality, assume $q \in M_1\backslash M_2$, where $q$ is a prime. We show that $M_2 \cup \{q\}\in \mathcal{S}_{F}$. Let $B \subseteq M_2 \cup \{q\}$. If $B \subseteq M_2$, then $\zeta_{\prod\limits_{p\in B}p^{e_p}} \in \mathscr{M}_{2, \overline{F}}$, for some $e_p \in \mathbb{N}$, where $p \in B$. Now, if $q \in B$, $\zeta_{q^{e_q}p_0^{e_{p_0}}} \in \mathscr{M}_{2, \overline{F}}$, and  $\zeta_{p_0^{e_{p_0}} \prod_{p\in B}^s p^{e_p}} \in \mathscr{M}_{2, \overline{F}}$, for some $e_q$, $e_{p_0}$ and $e_p\in \mathbb{N}$ where $p\in B$.
Therefore, by Lemma \ref{equality}, we have $\zeta_{q^{e_q}p^{e_{p_0}} \prod_{p\in B} p^{e_p}} \in \mathscr{M}_{2, \overline{F}}$. This contradicts the maximality of $M_2$ and, therefore, $M_1 = M_2$. The converse is trivial and the lemma is proven.
\end{proof}
In the following two results, we establish a connection between $\mathscr{M}_{2, \overline{F}}$ and $\mathcal{S}_{F,\operatorname{max}}$. 

\begin{lemma}\label{snn} 
Let $\zeta_n\in \mathscr{M}_{2, \overline{F}}$. Then, $S_{F,n} \neq \emptyset$, and there exists a unique $M\in \mathcal{S}_{F,\operatorname{max}}$ such that $S_{F,n}\subseteq M$. 
\end{lemma}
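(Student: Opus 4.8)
The plan is to establish the three assertions in order, the heart of the argument being the verification that $S_n$ is itself a member of $\mathcal{S}$; granting this, both the existence and the uniqueness of $M$ follow quickly from the lemma preceding Definition \ref{munF} and from Lemma \ref{powerset-cyclotomic}. I would begin with $S_n\neq\emptyset$. Since $\zeta_n\in\mathscr{M}_{2,\overline{F}}$, Definition \ref{quadratic-cyclotomic} gives $[F(\zeta_n):F]=2$, so $\zeta_n\notin F$ and hence $o_F(\zeta_n)>1$; as $\zeta_n^n=1\in F$ this order is finite, so $o_F(\zeta_n)\geq 2$ has a prime divisor $p$, which lies in $S_n$.

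The main step is to show $S_n\in\mathcal{S}$ by taking $e_p:=\text{\textepsilon}_n(p)$ for each $p\in S_n$. Writing $\zeta_n=\prod_{p|n}\zeta_{p^{\text{\textepsilon}_n(p)}}$ by Remark \ref{rootsunity-divisor} and using $o_F(\zeta_n)=\prod_{p|n}o_F(\zeta_{p^{\text{\textepsilon}_n(p)}})$, with each factor a power of the corresponding prime (via \cite[Theorem 4.5]{conrad2014orders}, as in the proof of Lemma \ref{tnf-lemma}), one sees that $p\in S_n$ is equivalent to $\zeta_{p^{\text{\textepsilon}_n(p)}}\notin F$. For such $p$, the element $\zeta_{p^{e_p}}=\zeta_n^{\,n/p^{e_p}}$ lies in $F(\zeta_n)\setminus F$, so $F\subsetneq F(\zeta_{p^{e_p}})\subseteq F(\zeta_n)$ together with $[F(\zeta_n):F]=2$ forces $F(\zeta_{p^{e_p}})=F(\zeta_n)$ and hence $\zeta_{p^{e_p}}\in\mathscr{M}_{2,\overline{F}}$. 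The identical sandwich argument handles every $B\finsub S_n$: putting $m=\prod_{p\in B}p^{e_p}$, coprimality gives $\zeta_m=\prod_{p\in B}\zeta_{p^{e_p}}$ and $o_F(\zeta_m)=\prod_{p\in B}o_F(\zeta_{p^{e_p}})>1$, so $\zeta_m\in F(\zeta_n)\setminus F$ and again $F(\zeta_m)=F(\zeta_n)$, whence $\zeta_m\in\mathscr{M}_{2,\overline{F}}$. Thus $S_n\in\mathcal{S}$.

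To produce $M$, I would note that $\{S_n\}$ is a chain in $\mathcal{S}$, so by the Hausdorff maximality principle it extends to a maximal chain $\mathbf{C}$ with $S_n\in\mathbf{C}$; by the lemma preceding Definition \ref{munF} its union $M:=\bigcup_{A\in\mathbf{C}}A$ lies in $\mathcal{S}$ and is the upper bound of a maximal chain, so $M\in\mathcal{S}_{max}$ and $S_n\subseteq M$. For uniqueness, suppose $M_1,M_2\in\mathcal{S}_{max}$ both contain $S_n$; then $M_1\cap M_2\supseteq S_n\neq\emptyset$, and Lemma \ref{powerset-cyclotomic} gives $M_1=M_2$.

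The main obstacle is precisely the verification that $S_n\in\mathcal{S}$: one must confirm not merely that each prime-power root $\zeta_{p^{e_p}}$ generates a quadratic extension but that every finite product $\zeta_{\prod_{p\in B}p^{e_p}}$ does as well, and this rests on the multiplicativity of $o_F$ over pairwise coprime factors, which is what guarantees the relevant roots of unity leave $F$. Everything after that is a routine appeal to the Hausdorff maximality principle and to Lemma \ref{powerset-cyclotomic}.
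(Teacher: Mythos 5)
Your proof is correct and follows essentially the same route as the paper's: you show $S_n\in\mathcal{S}$ with $e_p=\text{\textepsilon}_n(p)$ by trapping $F(\zeta_{p^{e_p}})$ and each $F\bigl(\zeta_{\prod_{p\in B}p^{e_p}}\bigr)$ between $F$ and the quadratic extension $F(\zeta_n)$, then obtain existence of $M$ from the chain lemma preceding Definition \ref{munF} and uniqueness from Lemma \ref{powerset-cyclotomic}. You are in fact somewhat more careful than the paper, which only asserts $\zeta_{p^{\text{\textepsilon}_n(p)}}\notin F$ (you justify it via multiplicativity of $o_F$ over coprime factors) and only checks the product over all of $S_n$ rather than over every finite subset $B$, but the underlying argument is the same.
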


\begin{proof}
Suppose $\zeta_n\in \mathscr{M}_{2, \overline{F}}$. That is, $[F(\zeta_n):F]=2$. Also, $S_{F,n}\neq \emptyset$ as $\zeta_n\notin F$ by Definition \ref{sn}. Furthermore, for each $p\in S_{F,n}$, we have $\zeta_{p^{\text{\textepsilon}_n(p)}}\notin F$. This implies $[F(\zeta_{p^{\text{\textepsilon}_n(p)}}):F]=[F(\zeta_n):F]=2$. Consequently, $S_{F,n}\in \mathcal{S}_{F}$ according to Definition \ref{sn}. Therefore, there exists a unique $M\in \mathcal{S}_{F,\operatorname{max}}$ such that $S_{F,n}\subseteq M$, and the uniqueness of $M$ follows from Lemma \ref{powerset-cyclotomic}. 
\end{proof}

The following lemma establishes a connection between $\mathcal{S}_{F,\operatorname{max}}$ and the equality of two cyclotomic fields. It follows easily from Lemma \ref{equality} and therefore the proof is omitted.
\begin{lemma}\label{cyclotomic-powersetofprimes}
We have $\zeta, \zeta'\in\mathscr{M}_{2, \overline{F}}$ and $F(\zeta)=F(\zeta')$ if and only if there exists $M\in \mathcal{S}_{F,\operatorname{max}}$ such that $\zeta , \zeta'\in \bigvee \limits_{p\in M}\mu_{p^{\nu_{p^\infty_F}}} \bigvee \limits_{p \in \mathbb{P}\setminus M}\mu_{p^{\ell_{p_F^\infty}}}\setminus\mu_{\infty_F}$.
\end{lemma}

We arrive at the main theorem of this section, describing the set of roots of unity defining quadratic extensions as the difference of two groups. We observe again that the constants $\nu_{p,F}$ and $\ell_{p_F^\infty}$ become fundamental constants in the study of the set of roots of unity defining quadratic extensions. Additionally, we identify $\mathcal{S}_{F,\operatorname{max}}$ with the set of quadratic $p$ power cyclotomic extensions.

\begin{introthm1}{Theorem} \label{m2F}
	\begin{enumerate}
		\item The set of roots of unity in $\mu_{\infty}$ defining quadratic extensions can be described as a difference of two groups: $$\mathscr{M}_{2, \overline{F}}=\coprod \limits_{M\in \mathcal{S}_{F,\operatorname{max}}}(\bigvee \limits_{p\in M}\mu_{p^{\nu_{p^\infty_F}}} \bigvee \limits_{p \in \mathbb{P}\setminus M}\mu_{p^{\ell_{p_F^\infty}}}\setminus\bigvee \limits_{p \in \mathbb{P}}\mu_{p^{\ell_{p_F^\infty}}}),$$
		\item The set of quadratic $p$ power cyclotomic extensions can be expressed as: $$\mathscr{C}_{2,\overline{F}}\simeq \mathcal{S}_{F,\operatorname{max}}.$$
	\end{enumerate}
\end{introthm1}
\begin{proof}	
 \begin{enumerate}
 	\item This statement follows directly from Theorem \ref{setofgenerators}, Lemma \ref{snn} and Lemma \ref{cyclotomic-powersetofprimes}.
	%Clearly, we have $\bigcupdot\limits_{M\in \mathcal{S}_{F,\operatorname{max}}} (\bigvee \limits_{p\in M}\mu_{p^{\nu_{p^\infty_F}}} \bigvee \limits_{p \in \mathbb{P}\backslash M}\mu_{p^{\ell_{p_F^\infty}}}-\mu_{\infty_F})\subseteq\mathscr{M}_{2, \overline{F}}$ by Lemma \ref{smax}. To prove the converse, let $\zeta_n\in \mathscr{M}_{2, \overline{F}}$. Then $\zeta_n \notin F$ by Definition \ref{quadratic-cyclotomic}. Also, by Lemma \ref{snn} we have unique $M\in \mathcal{S}_{F, \operatorname{max}}$ such that $S_{F,n}\subseteq M$. That implies that $\zeta_n\in \mu_{M}-\mu_{\infty_F}$ by Definition \ref{munF}. Hence, $\mathscr{M}_{2, \overline{F}}\subseteq\bigcupdot\limits_{M\in \mathcal{S}_{F,\operatorname{max}}} (\mu_{M}-\mu_{\infty_F})$. Therefore, $\mathscr{M}_{2, \overline{F}}=\bigcupdot\limits_{M\in \mathcal{S}_{F,\operatorname{max}}} (\mu_{M}-\mu_{\infty_F})$ as desired.
 	\item We denote $\mu_M:= \bigvee \limits_{p\in M}\mu_{p^{\nu_{p^\infty_F}}} \bigvee \limits_{p \in \mathbb{P}\setminus M}\mu_{p^{\ell_{p_F^\infty}}}$. We want to prove that $\mathscr{C}_{2,\overline{F}}\simeq \mathcal{S}_{F,\operatorname{max}}$. Consider the map $$\begin{array}{llll} \Psi :&\mathcal{S}_{F,\operatorname{max}}& \rightarrow &\mathscr{C}_{2,\overline{F}}\\ 
 		&M & \mapsto &  F(\zeta_n)\end{array}$$ where $\zeta_n \in \mu_M\setminus\mu_{\infty_F}$. Indeed, $\Psi$ is well defined since all elements in $\mathcal{S}_{F,\operatorname{max}}$ are distinct by Lemma \ref{powerset-cyclotomic} and it is clear that, for any $M\in \mathcal{S}_{F,\operatorname{max}}$, all elements in $\mu_M\setminus\mu_{\infty_F}$ define a unique extension in $\mathscr{C}_{2,\overline{F}}$. 
		
 	Now we prove that $\Psi$ is injective.  Let $F(\zeta_n), F(\zeta_m)\in \mathscr{C}_{2,\overline{F}}$ where $\zeta_n\in \mu_M\setminus\mu_{\infty_F}$, and  $\zeta_m \in \mu_N\setminus\mu_{\infty_F}$.
 Suppose that $F(\zeta_n)=F(\zeta_m)$. Then by Lemma \ref{cyclotomic-powersetofprimes}, there is $L\in \mathcal{S}_{F,\operatorname{max}}$ such that  $\zeta_n, \zeta_m\in \mu_L\setminus\mu_{\infty_F}$. This shows that $M\cap L\neq \emptyset$ and $N\cap L\neq \emptyset$. As a result, $M= L = N$, by Lemma \ref{powerset-cyclotomic}. Hence, $\Psi$ is injective. 
 	 The surjectivity follows since for every $F(\zeta_n)\in \mathscr{C}_{2,\overline{F}}$ we have  $M\in \mathcal{S}_{F,\operatorname{max}}$ such that $S_{F,n} \subseteq M$ by Lemma \ref{snn}. Therefore, $\zeta_n \in \mu_M\setminus\mu_{\infty_F}$.  Therefore, $\Psi$ is bijective. 
 \end{enumerate}
\end{proof}

\subsection{Quadratic cyclotomic extensions over finite fields}
In the following lemma, we provide a treatment of quadratic cyclotomic extensions over finite fields according to the findings of this paper.
\begin{lemma}\label{finitequadratic}
	Let $p$ be a prime number with $p \neq \wp$. Let $q = \wp^m$ for some $m \in \mathbb{N}$. 
	\begin{enumerate}
		\item When $p$ is an odd prime number, then $p^e \mid (q + 1)$ if and only if $\mathbb{F}_{q^2} = \mathbb{F}_{q}(\zeta_{p^j})$ for all $j \in \{1, \cdots, e\}$.
		
		\item  We have $\mathscr{M}_{2,\overline{\mathbb{F}_q}}^p = \mu_{p^{\text{\textepsilon}_{q^2-1}(p)}}\setminus \mu_{p^{\text{\textepsilon}_{q-1}(p)}} $.
		\item  We have $\mathscr{G}_{2, \overline{\mathbb{F}_q}}=\mathcal{P}_{2^{\text{\textepsilon}_{q^2-1}(2)+1}}\bigvee  \mu_m $ where $m=\operatorname{q}_{q-1}(2)$.
	\end{enumerate}
	In particular, we have 
	\begin{itemize}
		\item $\nu_{p^\infty_{\mathbb{F}_q}}=\text{\textepsilon}_{q^2-1}(p)$ and $\ell_{p^\infty_{\mathbb{F}_q}}=\text{\textepsilon}_{q-1}(p)$
		\item $\operatorname{d}_{\mathbb{F}_q}(n)=(n, q-1)$ where $n\in \mathbb{N}$.
		\item When $p$ is odd, $\text{\textepsilon}_{q^2-1}(p)=\text{\textepsilon}_{q+1}(p)$
		\item When $p=2$, 
		\begin{itemize}
			\item $\text{\textepsilon}_{q-1}(2)>1,$ then $\text{\textepsilon}_{q^2-1}(2)=\text{\textepsilon}_{q-1}(2)+1$ and $\mathscr{M}_{2,\overline{\mathbb{F}_q}}=\mathscr{G}_{2,\overline{\mathbb{F}_q}}$
			\item $ \text{\textepsilon}_{q-1}(2)=1$, then $\text{\textepsilon}_{q^2-1}(2)=\text{\textepsilon}_{q+1}(2)+1$ and $\mathbb{F}_q$ has property $\mathcal{C}_2$. 
		\end{itemize}
	\end{itemize}
\end{lemma}
\begin{proof}
	\begin{enumerate}
		\item Suppose that $p$ is an odd prime number such that	$p^e \mid (q + 1)$. Then, we have $p \mid (q + 1)$, so that $p \mid (q^2-1)=(q-1)(q+1)$. Moreover, we have $p \nmid (q-1)$. Indeed, assuming the opposite, i.e., $p \mid (q-1)$, would lead to $p \mid (q+1)-(q-1)=2$, which is impossible since $p$ is odd. Therefore, $\mathbb{F}_{q^2} = \mathbb{F}_{q}(\zeta_{p^j})=\mathbb{F}_{q}(\zeta_p)$, for any $j \in \{ 1, \cdots, e\}$. The converse follows from the fact that $\zeta_{p^e} \in {\mathbb{F}_{q^2}}$ if and only if $p^e \mid (q^2-1)$ and $p^e \nmid (q-1)$.
		\item From (1), we deduce that $\nu_{p^\infty_{\mathbb{F}_q}}=\text{\textepsilon}_{q^2-1}(p)$. Moreover, we have $\zeta_{p^e} \in {\mathbb{F}_q}$ implies $p^e \mid (q-1)$. So that $\ell_{p^\infty_{\mathbb{F}_q}}=\text{\textepsilon}_{q-1}(p)$.
		\item Follows from (2) above and Theorem \ref{g2f}. Indeed, $\zeta_m \in {\mu_{2\infty+1}}_{\mathbb{F}_q}$ implies $m$ is odd by Definition \ref{mu_infinity} and $m \mid (q-1)$. Moreover, the maximal $m$ having this property is $m=\operatorname{q}_{q-1}(2)$.
	\end{enumerate}
	\begin{itemize}
		\item This is proven above. 
		\item Clear by definition of $\operatorname{d}_F(n)$. 
		\item When $p$ is odd, $\nu_{p^\infty_{\mathbb{F}_q}}=\text{\textepsilon}_{q+1}(p)$ by (1).
		
		\item When $p=2$,
		\begin{itemize}
			\item If $\text{\textepsilon}_{q-1}(2)>1$, then $2$ divides both $q+1$ and $q-1$ since $q$ is odd by the initial assumption. Also, we have $(q+1)-(q-1)=2$. That implies $\text{\textepsilon}_{q+1}(2)=1$ and $\text{\textepsilon}_{q^2-1}(2)=\text{\textepsilon}_{q-1}(2)+1$.	
			\item If $\text{\textepsilon}_{q-1}(2)=1$, then $\text{\textepsilon}_{q^2-1}(2)=\text{\textepsilon}_{q+1}(2)+1$ and $\mathbb{F}_q$ has property $\mathcal{C}_2$. 
		\end{itemize}
	\end{itemize}
\end{proof}

\section{The structure of the sets of quadratic extensions}\label{moduli-quadratic}

To view cyclotomic field extensions of degree $2$ within the full set of quadratic extensions, we include succinctly the treatment of the sets of quadratic extensions. Further, we will inject the sets of quadratic cyclotomic extensions into the sets of the general quadratic extensions.  
%We start with the quadratic extensions over a field of characteristic not $2$.

\subsection{Embedding the set of quadratic cyclotomic extensions into the set of separable quadratic extensions}
In this section, $s({F}) = \{ a \in F| \exists {b \in F}: [a =b^2]\}$. We note that $s({F})^*$ is a mutiplicative subgroup of $F^*$. The following lemma determines the structure of the set of quadratic extensions over a field of characteristic not $2$. This structure is simply the translation of Kummer's theory (see \cite[Theorem 5.8.5, Proposition 5.8.7]{VS}). 

\begin{lemma}\label{moduliquadratic}
	Let $F$ be a field of characteristic not $2$, ${\mathcal{Q}_{2,F}}_{\operatorname{iso}}$ be a set of quadratic extensions over $F$ up to isomorphism. Then $$  {\mathcal{Q}_{2,F}}_{\operatorname{iso}}\cong \frac{F^*}{s({F})^*}\setminus \{ s({F})^*\}.$$
\end{lemma}

\begin{proof}%Let ${\mathcal{Q}_{2,F}}$ be a set of quadratic extensions over $F$.
To prove the result we can simply prove that the map $\varphi:\frac{F^*}{s({F})^*}\setminus\{ s({F})^*\} \rightarrow  {\mathcal{Q}_{2,F}}_{\operatorname{iso}}$ defined by $\varphi(as({F})^*)=\left[F[x]/ \left\langle x^2-a\right\rangle   \right]_{\operatorname{iso}}$ is well-defined and bijective.
% We have $a s({F})^*=bs({F})^*$ for some $a,b\in F^*$, if and only if ${a}{b^{-1}}=c^2$ for some $c\in F^*$ (see \cite[Proposition 5.8.7]{VS} for the general case).  This is equivalent to $F[x]/ \left\langle x^2-a\right\rangle\cong_F F[x]/ \left\langle x^2-b\right\rangle$. This implies that $\varphi$ is a one-to-one well-defined map. We note that $a\notin s({F})^*$, so $x^2-a$ implies that $F[x]/\left\langle x^2-a\right\rangle$ is a quadratic extension of $F$. %We claim that $\varphi$ is bijective.  	
	%We now show that $\varphi$ is injective. Suppose that $\varphi(as({F})^*)=\varphi(b s({F})^*)$. Thus, $F[x]/\left\langle x^2-a\right\rangle\cong_F F[x]/\left\langle x^2-b\right\rangle$ which implies that $a b^{-1} \in s({F})^*$ and hence $as({F})^*=bs({F})^*$. 
%	Finally, the surjectivity follows by simply completing the square of the minimal polynomial of a generator of a quadratic extension.
%It remains to prove that $\varphi$ is surjective. Let $K/F$ be any quadratic extension in $\mathcal{Q}_{2,F}$, then completing the square, we have that $K\cong_F F[x]/\left \langle	x^2-a \right\rangle$ for some $a\in F^{*}$.  It follows that $a\notin s({F})^*$ since $x^2-a$ is irreducible over $F$. Hence, we have $a \in F^*- s(F)^*$ in which $\varphi(a s({F})^*)=\left[F[x]/ \left\langle x^2-a\right\rangle \right]_{\operatorname{iso}}$.
\end{proof}

Over the field of characteristics different from $2$, all quadratic extensions are simple radical. If $F(\zeta_n)/F$ is a quadratic we know by (1) Corollary \ref{generator} that $\zeta_n -\zeta_n^{\text{ \textlyoghlig}_n}$ is a radical generator for $F(\zeta_n)$ over $F$. To embed the structure of the set of quadratic cyclotomic extensions along with the set of all quadratic extensions over fields of characteristic not equal to $2$, we define the following map.

\begin{definition}\label{chi-rad}
	Let $n \in \mathbb{N}$. When $\wp \neq 2$, we define a map $$\begin{array}{ccll}\text{\texthtrtaild}_{\operatorname{rad}} :&\mathscr{M}_{2,\overline{F}}& \rightarrow & \frac{F^{*}}{s(F)^{*}}\setminus\left\{ s(F)^*\right\}\\ 
		&\zeta_n& \mapsto & [ \zeta_n^2+\zeta_n^{2j_n}-2\zeta_n^{j_n+1}]_{s({F})^{*}}. \end{array}$$
\label{injective-cyclotomic}
	We have a bijective map 
	$$\mathscr{C}_{2,\overline{F}} \simeq \text{\texthtrtaild}_{\operatorname{rad}} (\mathscr{M}_{2,\overline{F}}).$$ Moreover, this bijection induces an injective map $\mathscr{C}_{2,\overline{F}}  \hookrightarrow {\mathcal{Q}_{2,F}}_{\operatorname{iso}}$.
\end{definition}

For quadratic extensions over a field of characteristic $2$, the structure is just the translation of Artin-Schreier's theory  (see \cite[Theorem 5.8.4, Proposition 5.8.6]{VS}). 

\begin{lemma}
	Let $F$ be a field of characteristic $2$, ${\mathcal{Q}_{2,F}^{\operatorname{sep}}}_{\operatorname{iso}}$ be a set of separable quadratic extensions of $F$ up to isomorphism. Then $$ {\mathcal{Q}_{2,F}^{\operatorname{sep}}}_{\operatorname{iso}} \simeq \frac{F}{\mathcal{A}(F)}\setminus  \{\mathcal{A}(F)\}$$ where $\mathcal{A}(F)=\{a^2-a|a\in F\}$ is an additive subgroup of $F$.
 %Then $\frac{F}{F^2-F}-\{ F^2-F\} \cong {\mathcal{Q}_{2,F}^{sep}}_{iso}.$
\end{lemma}

\begin{proof} %Let ${\mathcal{Q}_{2,F}^{\operatorname{sep}}}$ be a set of separable quadratic extensions of $F$.
To prove the result we can simply prove that the map $\varphi:\frac{F}{\mathcal{A}(F)}\setminus \{\mathcal{A}(F)\}\rightarrow {\mathcal{Q}_{2,F}^{\operatorname{sep}}}_{\operatorname{iso}}$ defined by $\varphi(a+\mathcal{A}(F))=\left[F[x]/ \left\langle x^2-x-a\right\rangle \right]_{\operatorname{iso}},$ is well-defined and bijective. %We have $a+\mathcal{A}(F)=b+\mathcal{A}(F)$ for some $a,b\in F$, if and only if  $b-a\in \mathcal{A}(F)$. This is equivalent to $b-a=c^2-c$ for some $c\in F$. It follows that $\varphi$ a well-defined and one-to-one map.
	%We need to prove $\varphi$ is a bijective map.
%	Next, we prove that $\varphi$ is injective. Assume that $\varphi(a+\mathcal{A}(F))=\varphi(b+\mathcal{A}(F))$. This implies that $\left[F[x]/ \left\langle x^2-x-a\right\rangle \right]_{\operatorname{iso}}=\left[F[x]/ \left\langle x^2-x-b\right\rangle  \right]_{\operatorname{iso}}.$ It follows that $F[x]/\left\langle x^2-x-a\right\rangle\cong_F F[x]/\left\langle x^2-x-b\right\rangle$ which implies that $b-a \in \mathcal{A}(F)$. Hence $a+\mathcal{A}(F)=b+\mathcal{A}(F)$ and so $\varphi$ is one to one. 
	%It remains to prove that $\varphi$ is surjective. We know that any extension $K/F \in \mathcal{Q}_{2,F}^{\operatorname{sep}}$ is isomorphic to $F[x]/\left\langle x^2-x-a\right\rangle$ over $F$ for some $a\in F$ (see \cite[Theorem 5.8.4]{VS}).
% From Lemma \ref{quadratic-char2} we know that any $K \in \mathcal{Q}_{2,F}^{sep}$ is isomorphic to $F[x]/\left\langle x^2-x-a\right\rangle$ over $F$ for some $a\in F$. 
% This implies that $a\notin \mathcal{A}(F)$ since $x^2-x-a$ is irreducible. Hence there is $a\in F\setminus \mathcal{A}(F)$ such that $\varphi(a+\mathcal{A}(F))=\left[F[x]/ \left\langle x^2-x-a\right\rangle \right]_{\operatorname{iso}}.$
\end{proof}

Over the field of characteristic $2$, all quadratic extensions are Artin-Schreier. If $F(\zeta_n)/F$ is a quadratic we know by (2) Corollary \ref{generator} that $\frac{\zeta_n }{\zeta_n +\zeta_n^{\text{ \textlyoghlig}_n}}$ is an Artin-Schreier generator for $F(\zeta_n)$ over $F$. To embed the structure of the set of quadratic cyclotomic extensions along within the set of all separable quadratic extensions over fields of characteristic $2$, we define the following embedding.

\begin{definition}\label{chi-rad2} Let $n \in \mathbb{N}$. When $\wp= 2$, we define a map $$\begin{array}{ccll}\text{\texthtrtaild}_{\operatorname{as}} :&\mathscr{M}_{2,\overline{F}}& \rightarrow & \frac{F}{\mathcal{A}(F)}\setminus \{\mathcal{A}(F)\}\\ 
		&\zeta_n& \mapsto & \left[ \frac{\zeta_n^{\text{ \textlyoghlig}_n+1} }{ \zeta_n^2+\zeta_n^{2\text{ \textlyoghlig}_n}+2\zeta_n^{\text{ \textlyoghlig}_n+1}} \right]_{\mathcal{A}(F)}. \end{array}$$

	We have a bijective map 
	$$ \mathscr{C}_{2,\overline{F}} \simeq \text{\texthtrtaild}_{\operatorname{as}}(\mathscr{M}_{2,\overline{F}})$$ This map induces an injective map $\mathscr{C}_{2,\overline{F}}  \hookrightarrow {\mathcal{Q}_{2,F}^{\operatorname{sep}}}_{\operatorname{iso}}$. 	
\end{definition}

\subsection{The structure of the set of inseparable quadratic extensions}
We include a description of the structure of the set of the inseparable extensions of degree $2$ over the field of characteristic $2$, for completeness. Since any element not in the base field serves as an inseparable generator for an inseparable extension of degree $2$, one can easily prove the following lemma.
\begin{lemma}\label{isomorphic-inseparable}
	Let $F$ be a field of characteristic  $2$ and let $K$ (resp. $L$) be a purely inseparable quadratic extension of $F$ defined by the minimal polynomial  $x^2-a$ (resp. $x^2-a')$. Then $K$ and $L$ are isomorphic over $F$ if and only if $a=c^2a'-b^2$ for some $b,c\in F$.
\end{lemma}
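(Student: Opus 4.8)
The plan is to translate the isomorphism problem into the existence of a square root and then to exploit the characteristic-$2$ Frobenius. Write $K = F(\alpha)$ with $\alpha^2 = a$ and $L = F(\beta)$ with $\beta^2 = a'$; since both $K$ and $L$ are purely inseparable quadratic extensions, $a, a' \in F \setminus F^2$, so that $x^2 - a$ and $x^2 - a'$ are irreducible (each being a perfect square $(x-\alpha)^2$, $(x-\beta)^2$ in characteristic $2$).

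First I would record the key reformulation: an $F$-isomorphism $K \to L$ is the same datum as a root of $x^2 - a$ inside $L$. Indeed, any $F$-algebra homomorphism $\phi\colon K = F[x]/\langle x^2 - a\rangle \to L$ is determined by the image $\phi(\alpha)$, which must satisfy $\phi(\alpha)^2 = \phi(a) = a$ since $\phi$ fixes $F$; conversely any $\gamma \in L$ with $\gamma^2 = a$ defines such a homomorphism by $\alpha \mapsto \gamma$. Because $K$ is a field and $[K:F] = [L:F] = 2$, every nonzero such homomorphism is automatically an isomorphism. Hence $K \cong_F L$ if and only if there exists $\gamma \in L$ with $\gamma^2 = a$.

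Next I would parametrize $\gamma$. Every element of $L$ has the form $\gamma = b + c\beta$ with $b, c \in F$, and using $2 = 0$ together with $\beta^2 = a'$ one computes $\gamma^2 = b^2 + c^2 a'$ (the cross term $2bc\beta$ vanishes). Thus a suitable $\gamma$ exists precisely when $a = b^2 + c^2 a'$ for some $b, c \in F$; since $-1 = 1$ in $F$, this equation is identical to $a = c^2 a' - b^2$, which is exactly the claimed condition. This handles both directions at once: for $(\Leftarrow)$ one exhibits $\gamma = b + c\beta$ as a root of $x^2 - a$ in $L$, and for $(\Rightarrow)$ one takes an $F$-isomorphism $\phi$, sets $\phi(\alpha) = b + c\beta$, and reads off the coefficients.

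I do not expect any serious obstacle here; the argument is essentially a one-line square computation once the bijection between $F$-isomorphisms and roots of $x^2 - a$ in $L$ is in place. The only points needing a word of care are that the identity $\gamma^2 = b^2 + c^2 a'$ relies on the freshman's-dream identity $(b+c\beta)^2 = b^2 + (c\beta)^2$ valid in characteristic $2$, and that $c \neq 0$ is forced automatically (otherwise $a = b^2 \in F^2$, contradicting the irreducibility of $x^2 - a$), so no nondegeneracy hypothesis on $b, c$ need be imposed in the statement.
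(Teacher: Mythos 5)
Your proof is correct. Note that the paper itself offers no argument for this lemma (it is introduced only with ``One can prove easily the following lemma''), so there is nothing to compare against; your write-up supplies exactly the intended standard argument: $F$-homomorphisms $K\to L$ correspond to roots of $x^2-a$ in $L$ and are automatically isomorphisms by degree count, and the Frobenius identity $(b+c\beta)^2=b^2+c^2a'$ in characteristic $2$ turns the existence of such a root into the condition $a=c^2a'-b^2$. Your closing remark that $c\neq 0$ is forced by irreducibility of $x^2-a$ is also worth keeping, since the paper's subsequent group action $\rho$ uses $c\in{F^\times}^2$, and your observation confirms the two formulations agree.
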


Based on the relation derived from the system of parameters in the preceding lemma, we extract a group action that enables us to characterize the isomorphism classes as an orbit under this action.
\begin{definition}
	Let $F$ be a field of characteristic $2$. 
	\begin{enumerate} 
		\item We define $\varphi : s({F})^* \rightarrow \text{Aut}(s(F))$ as the map such that $\varphi(c) = \varphi_{c}$ where $c \in s({F})^*$ and $\varphi_{c} : s(F) \rightarrow s(F)$ is defined as $\varphi_{c}(b) = cb$ for some $b \in s(F)$. 
		\item We define the semi-direct product $s({F})^* \ltimes_{\varphi} s(F)$ to be the set $s({F})^* \times s(F) $ endowed with the binary operation $(c,b)(c',b') = (cc', \varphi_{c'}(b) + b')$. One can prove that $s({F})^* \ltimes_{\varphi} s(F)$ is a group.  
		\item We define a group action of $F$ on $s({F})^* \ltimes_{\varphi} s(F)$ denoted by $\rho : F \times s({F})^* \ltimes_{\varphi} s(F) \rightarrow F$ as the map $(a, (c,b)) \mapsto \rho(a, (c,b)) = ca - b$. %One can prove that it is a group action. Moreover, the set of all orbits of $F$ under the action of $s({F})^* \ltimes_{\varphi} s(F)$ is isomorphic to the set of all equivalence classes of $F$ under the relation $\sim$ on $F$ defined by $a \sim a'$ if $a = c^2a' - b^2$ where $b,c \in F$, $c \neq 0$. 
		We denote $O_{\rho}(a)$ to be the orbit of $a \in F$ with respect to $\rho$ and $\frac{F^*}{ s({F})^* \ltimes_\varphi s(F)}$ the set of these orbits.
	\end{enumerate}
\end{definition}

\begin{lemma}
	Let $F$ be a field of characteristic $2$ , ${\mathcal{Q}_{2,F}^{\operatorname{insep}}}_{\operatorname{iso}}$ be a set of inseparable quadratic extensions up to isomorphism. Then $${\mathcal{Q}_{2,F}^{\operatorname{insep}}}_{\operatorname{iso}} \simeq \frac{F^*}{ s({F})^* \ltimes_\varphi s(F)} \setminus \{ O_\rho(1) \}$$
\end{lemma}
\begin{proof}%Let ${\mathcal{Q}_{2,F}^{\operatorname{insep}}}$ be a set of inseparable quadratic extensions.
To prove the result we can simply prove that the map $$\varphi:\frac{F^*}{s({F})^* \ltimes_\varphi s(F)}\setminus \{ O_\rho(1)\} \rightarrow \mathcal{Q}_{2,F}^{\operatorname{insep}}$$ be defined as $\varphi(O_\rho(a))=[F[x]/\langle x^2-a \rangle]_{\operatorname{iso}}$ is well-defined and bijective. 
%We have $O_\rho(a)=O_\rho(a')$ if and only if $a=ca'-b$ for some $c\in S(F)^*$ and $b\in F^2$. By Lemma \ref{isomorphic-inseparable}, this is also equivalent $[F[x]/\langle x^2-a \rangle]_{\operatorname{iso}}=[F[x]/\langle x^2-a' \rangle]_{\operatorname{iso}}$. Hence $\varphi$ is well defined and one-to-one.
%	%Next, we show that $\varphi$ is injective. Suppose that $ \varphi(O_\rho(a))=\varphi(O_\rho(a'))$ this implies that $[F[x]/\langle x^2-a \rangle]_{\operatorname{iso}}=[F[x]/\langle x^2-a' \rangle]_{\operatorname{iso}}$.  Hence we have $[F[x]]/\langle x^2-a \rangle \cong_F  F[x]/\langle x^2-a' \rangle$ which implies that $a=ca'-b$ where $c\in s({F})^*$ and $b\in F^2$. It follows that $a\in O_\rho(a')$ proving that $O_\rho(a)=O_\rho(a')$ and so $\varphi$ is injective.
%	Lastly, we show that $\varphi$ is surjective. Let $K$ be an inseparable extension of degree $2$, then $K$ is isomorphic to $F[x]/\langle x^2-a\rangle$ where $a\notin F^2$ since $x^2-a$ is irreducible. This implies that we have $a\in F^*\setminus s(F)^*$ such that $\varphi(O_\rho(a))=[F[x]/\langle x^2-a \rangle]_{\operatorname{iso}}.$
\end{proof}

\bibliographystyle{Abbrv}
	
\end{document}